\documentclass[a4paper,twoside]{article}
\usepackage{a4}

\usepackage{hyperref}     
\hypersetup{
	colorlinks=true,        
	linkcolor=blue,         
	citecolor=blue,
	urlcolor=blue,
}

\usepackage{graphicx}
\usepackage{amssymb}
\usepackage{epstopdf}
\usepackage{amsmath,amsthm}
\numberwithin{equation}{section}
\numberwithin{figure}{section}

\usepackage{upref}
\usepackage[active]{srcltx}
\usepackage{xypic}

\usepackage{amsthm,amsmath,amssymb}

\usepackage{mathrsfs}

\usepackage{graphicx} 
\usepackage{amsmath, bm}

\theoremstyle{plain}
\newtheorem{thm}{Theorem}[section]

\newtheorem{lem}[thm]{Lemma}
\newtheorem{cor}[thm]{Corollary}

\newtheorem{definition}[thm]{Definition}

\newtheorem{remark}[thm]{Remark}

\newcommand{\R}{\mathbb{R}}

\newcommand{\norm}[1]{\left\| #1 \right\|}
\newcommand{\abs}[1]{\left| #1 \right|}
\newcommand{\dint}{\iint_{\R^{n}\times\R^{n}}}

\begin{document}
	\title{Korevaar-Schoen Energy and Interpolations of Fractional Sobolev mappings on Carnot groups}
	\author{Yihan Cui}
	\maketitle
	\begin{abstract}
		This paper investigates the theory of fractional Sobolev mappings between Carnot groups, with a particular focus on the Korevaar-Schoen energy functional and interpolation properties. We establish that weak contact equations hold for fractional Sobolev mappings on Carnot groups, generalizing classical results from the Euclidean setting to the sub-Riemannian framework. Our main results include: (1) the proof that fractional Sobolev mappings satisfying weak contact equations belong to the horizontal fractional Sobolev space $W_H^{s-\lambda,q}(\Omega;G_2)$
		for any sufficiently small $\lambda>0$; (2) interpolation theorems for fractional Sobolev mappings between Carnot groups, demonstrating that the limiting behavior as $s \to 1$
		recovers the first-order Sobolev space; (3) a rigorous treatment of the Korevaar-Schoen energy on Carnot groups, including a characterization of its minimizers via horizontal differential forms. 
	\end{abstract}
	\tableofcontents

	\section{Introduction}
	The calculus of variations for mappings taking values in a non-smooth
	target has its origin in the theory of harmonic maps. After the
	foundational work of Eells and Sampson \cite{eells1964harmonic} for
	smooth Riemannian targets, Korevaar and Schoen
	\cite{korevaar1993sobolev,korevaar1997global} initiated the study of
	Sobolev mappings $u:\Omega\to X$ from a Riemannian domain into an
	arbitrary metric space $(X,d)$. Their starting point is that, when the
	target carries no linear structure, the weak derivative of $u$ has to
	be replaced by an \emph{energy} which measures, at scale
	$\varepsilon$, the average distortion of pairs of values:
	\begin{equation}\label{eq:KS-classical}
		E_\varepsilon(u) = \int_{\Omega} \frac{1}{|B(x,\varepsilon)|}
		\int_{B(x,\varepsilon)}
		\frac{d\big(u(x),u(y)\big)^{2}}{\varepsilon^{2}}\,dy\,dx ,
	\end{equation}
	the Sobolev class $W^{1,2}(\Omega;X)$ being defined by the condition
	$\sup_{\varepsilon>0}E_\varepsilon(u)<\infty$. This idea has proved
	remarkably fertile: it yields existence and regularity theories for
	harmonic maps into singular and non-locally compact targets
	\cite{korevaar1993sobolev,korevaar1997global,eells2001harmonic,jost1997nonpositive},
	and it is one of several equivalent ways of defining a first-order
	Sobolev space of metric-space valued maps --- the others being based
	on post-composition with Lipschitz functions and upper gradients
	(Reshetnyak \cite{reshetnyak1997sobolev,reshetnyak2004sobolev}), on
	pointwise inequalities with integrable ``gradients'' in the sense of
	Haj{\l}asz
	\cite{hajlasz1996sobolev,hajlasz2000sobolev}, and on the modulus of
	curve families (Newtonian spaces, Shanmugalingam
	\cite{shanmugalingam2000newtonian}). These approaches are known to be
	compatible under the now customary assumption that the source is a
	doubling metric measure space supporting a Poincar\'e inequality, an
	investigation which originated in Cheeger's measurable differentiable
	structure \cite{cheeger1999differentiability} and is systematically
	exposed in
	\cite{hajlasz2000sobolev,shanmugalingam2000newtonian,ambrosio2005gradient}.
	
	A second, more recent line of research concerns the \emph{variational
		limit} of functionals of the type \eqref{eq:KS-classical}. For $p>1$
	the $\Gamma$-limit of the Korevaar--Schoen $p$-energies on a Cheeger
	space has been identified recently by Alonso Ruiz and Baudoin
	\cite{ruiz2024korevaarschoenpenergiesgammalimitscheeger}: the limit is a constant multiple of the
	Cheeger $p$-energy, whose domain is the Newtonian space $N^{1,p}$,
	while for $p=1$ it is a total-variation functional on $BV$; on compact
	spaces the convergence even improves to Mosco convergence. Since a
	Carnot group endowed with its Carnot--Carath\'eodory (CC) distance, a
	Haar measure and the horizontal distribution is a prototypical example
	of a Cheeger space, this circle of ideas provides the natural
	variational framework for the energy studied in the present paper.

	The geometric setting of this paper is that of \emph{Carnot groups},
	i.e.\ simply connected nilpotent Lie groups whose Lie algebra admits a
	stratification $\mathfrak g=\mathfrak g^{[1]}\oplus\cdots\oplus
	\mathfrak g^{[s]}$ with $[\mathfrak
	g^{[1]},\mathfrak g^{[k]}]=\mathfrak g^{[k+1]}$. These groups carry a
	left-invariant sub-Riemannian (CC) metric $d_{cc}$, a one-parameter
	family of dilations $\delta_\lambda$ with respect to which $d_{cc}$ is
	$1$-homogeneous, and a homogeneous dimension
	$Q_G=\sum_k k\dim\mathfrak g^{[k]}$ which governs all
	scaling-invariant inequalities. Their analysis has been developed since
	the 1980s; see e.g.\
	\cite{folland1982hardy,garofalo1996isoperimetric}.
	
	What makes analysis on Carnot groups genuinely different from the
	Euclidean theory is the correct notion of derivative. By the
	celebrated theorem of Pansu \cite{pansu1989metriques}, a Lipschitz map
	between Carnot groups is differentiable almost everywhere in the
	following \emph{dilated} sense: the limit of
	$\delta_{1/t}\big(f(p)^{-1}f(p\,\delta_t\xi)\big)$ as $t\to0$ exists
	and is a \emph{group homomorphism} which, moreover, respects the
	stratification. Thus the Pansu differential is not merely a linear map,
	but a homomorphism of stratified Lie algebras, and this algebraic
	rigidity has a decisive analytic consequence: if $X$ is a horizontal
	vector field and $\omega$ a contact form belonging to a
	\emph{different} layer of the target, then $X(f^{*}\omega)=0$. In
	other words, Pansu differentiable maps automatically satisfy a system
	of \emph{contact equations}. Pansu's theorem has been extended in
	several directions; in particular Vodopyanov
	\cite{Vodopyanov2003DifferentiabilityOM} proved that weakly contact
	Sobolev maps of class $W^{1,p}$ with $p$ above the homogeneous
	dimension are Pansu differentiable almost everywhere, together with
	the accompanying change-of-variables formulas and Lusin-type
	properties, and the interplay between contact geometry, Pansu
	differentiability and area/coarea formulas has been further clarified
	in
	\cite{magnani2005coarea,magnani2013towards,warhurst2008contact,vodopyanov2007traces}.
	
	At the level of first-order Sobolev mappings, the contact condition is
	the defining constraint of the variational theory of
	\emph{Legendrian} (horizontal) maps. Capogna and Lin
	\cite{Capogna2001} introduced and studied Legendrian energy
	minimizers with Heisenberg-group target, and Tan \cite{tan2007sobolevclasseshorizontalenergy}
	developed the theory of Sobolev classes and horizontal energy
	minimizers between Carnot--Carath\'eodory spaces. A recurrent
	phenomenon in all these works is that the \emph{vertical}
	(higher-layer) components of a contact map are not free: in a step-two
	group they are determined by the horizontal ones through a quadratic
	relation, and this is precisely the mechanism that makes the
	corresponding variational problems tractable.

	Parallel to these developments, the theory of fractional Sobolev
	spaces has undergone a spectacular expansion. The classical
	Gagliardo--Slobodeckij spaces $W^{s,q}$, $0<s<1$, based on the double
	integral
	\[
	[f]_{W^{s,q}}^q
	= \int\!\!\int \frac{|f(x)-f(y)|^{q}}{|x-y|^{n+sq}}\,dx\,dy ,
	\]
	are by now a standard tool in the calculus of variations and in
	nonlinear PDEs \cite{dinezza2012hitchhiker}. A landmark result of
	Bourgain, Br\'ezis and Mironescu \cite{bourgain2001another} describes
	the endpoint behaviour as $s\to1$: for $f\in W^{1,q}(\mathbb R^{n})$,
	\[
	\lim_{s\to1}(1-s)\int\!\!\int
	\frac{|f(x)-f(y)|^{q}}{|x-y|^{n+sq}}\,dx\,dy
	= C_{n,q}\int_{\mathbb R^{n}} |\nabla f|^{q} ,
	\]
	so that the first-order Sobolev seminorm is recovered as the limiting
	case of the fractional ones; the companion endpoint $s\to0$ was
	obtained by Maz'ya and Shaposhnikova \cite{mazya2002bourgain}.
	
	Both statements have been transported to Carnot groups, where the
	Euclidean distance in the Gagliardo kernel is replaced by the CC
	distance and the exponent $n$ by the homogeneous dimension. Maalaoui
	and Pinamonti \cite{MAALAOUI201991} used real
	interpolation (the $K$-method) to characterize fractional Sobolev
	spaces on Carnot groups and proved the corresponding
	Bourgain--Br\'ezis--Mironescu and Maz'ya--Shaposhnikova type endpoint
	formulas, together with the interpolation identity
	$(L^{q},\dot W^{1,q})_{s,q}=W^{s,q}$ that we use repeatedly below. The
	embedding theory for fractional Slobodeckij spaces on metric-measure
	spaces, and their relation with the fractional Haj{\l}asz spaces
	$M^{s,q}$, was developed by G\'orka and S{\l}abuszewski
	\cite{GORKA2022112867}. On nilpotent Lie groups, function spaces of
	Besov and Sobolev type have an older and independent tradition going
	back to Saka \cite{Saka1979}, based on representation theory and
	difference operators; we refer to it for background, but not for the
	contact phenomena studied here.

	The first-order contact theory recalled above rests on Pansu
	differentiability, which is available for Lipschitz maps and, in the
	Sobolev setting, for sufficiently integrable weakly contact maps. For a
	merely H\"older continuous map, and \emph{a fortiori} for a fractional
	Sobolev map $f\in W^{s,q}_H(\Omega;G_2)$ with $0<s<1$, a Pansu
	differential need not exist at any point. Consequently, the following
	questions --- all of which are classical and well understood in the
	first-order case --- are, to the best of our knowledge, open in the
	fractional setting:
	
	\begin{enumerate}
		\item[(a)] Is there a meaningful \emph{contact condition} for
		fractional Sobolev mappings into a Carnot group, given that no
		pointwise Pansu differential is available?
		
		\item[(b)] Do the fractional seminorms of such a map form an
		interpolation scale whose endpoint $s\to1$ recovers the horizontal
		first-order energy $\|D_Hf\|_{L^{q}}$?
		
		\item[(c)] How does the Korevaar--Schoen energy of a fractional
		Sobolev map decompose into a horizontal part and a vertical defect,
		and which equation is satisfied by its minimizers?
		
		\item[(d)] Can one define, in a distributional sense, the Jacobian
		determinant of a Sobolev map with Heisenberg-group target, in the
		spirit of the Euclidean theory of distributional Jacobians
		\cite{GLADBACH2020108312,tmna/1475178877,Fausto,Li2021}?
	\end{enumerate}
	
	The purpose of this paper is to answer these four questions. Our main
	tools are group mollification $f_\varepsilon=f*\eta_\varepsilon$ on the
	source Carnot group, the stratified Taylor formula of Arena, Caruso and
	Causa \cite{Arena2010}, Haj{\l}asz upper gradients, and the
	interpolation results of \cite{MAALAOUI201991}.
	
	Throughout, $\Omega\subset G_1$ is a domain in a Carnot group, $G_2$ is
	another Carnot group with CC distance $d_{cc}$ and topological
	dimension $\nu_2$, and $W^{s,q}_H(\Omega;G_2)$ denotes the space of
	maps $f$ with
	\[
	\int_{\Omega}\int_{\Omega}
	\frac{d_{cc}\big(f(x),f(y)\big)^{q}}
	{d_{cc}(x,y)^{\nu_1+sq}}\,dx\,dy < \infty ,
	\]
	where $\nu_1$ is the homogeneous dimension of $G_1$.
	
	\paragraph{(I) Weak contact equations for fractional Sobolev mappings
		(Section~\ref{sec:contact}).}
	Our first result, Theorem~\ref{thm:contact-direct}, shows that, in
	spite of the absence of a pointwise Pansu differential, fractional
	Sobolev mappings satisfy a \emph{weak} contact equation. For
	$f\in W^{s,q}_H(\Omega;G_2)$ with $s>1/2$ and $q\ge2$, and for every
	horizontal left-invariant vector field $X$,
	\begin{equation}\label{eq:intro-contact}
		X\big(f_\varepsilon^{*}\omega\big)\longrightarrow 0
		\quad\text{in } L^{\frac q2}_{\mathrm{loc}}(\Omega;\mathbb R^{\nu_2})
		\qquad\text{as }\varepsilon\to0 ,
	\end{equation}
	where $f_\varepsilon=f*\eta_\varepsilon$ is the group mollification and
	$\omega$ is any contact form of the second layer; moreover
	$\pi f_\varepsilon\to\pi f$ in $W^{s,q}(\Omega)$, where $\pi$ denotes
	the projection onto the horizontal layer. 
	
	For $q=2$ the proof is a
	direct use of Fubini theorem, while for $q>2$ we use the
	observation that $s>1/2$ permits one to work with $s-\lambda$ in place
	of $s$ for arbitrarily small $\lambda>0$, which produces the extra
	integrability needed.
	
	Our second result, Theorem~\ref{thm:contact-converse}, is a converse,
	and is perhaps the more interesting of the two: \emph{if} the
	horizontal projection $\pi f$ belongs to
	$W^{s,q}(\Omega;\mathbb R^{\nu_2})$ \emph{and} $f$ satisfies the weak
	contact equations, \emph{then}
	$f\in W^{s-\lambda,q}_H(\Omega;G_2)$ for every $\lambda>0$
	sufficiently small. The proof combines the Haj{\l}asz gradient
	characterization of fractional spaces (available on Carnot groups by
	\cite{GORKA2022112867}) with the quadratic control of the vertical
	components,
	\[
	|f^{B_k}(x)-f^{B_k}(y)|\lesssim |\pi f(x)-\pi f(y)|^{2} ,
	\]
	which is the distributional trace of the commutator structure of
	$G_2$. Thus, up to an arbitrarily small loss of differentiability, the
	full group-valued fractional seminorm of a contact map is controlled
	by its horizontal projection. This is the fractional counterpart of
	the classical fact that a contact map is determined by its horizontal
	components.
	
	\paragraph{(II) Interpolation (Section~\ref{sec:interp}).}
	Theorem~\ref{thm:interp-limit} is the
	Bourgain--Br\'ezis--Mironescu phenomenon for mappings between Carnot
	groups: if $f\in\bigcap_{s\in(0,1)}\dot W^{s,q}_H(\Omega;G_2)$ on a
	bounded domain $\Omega$ and
	\[
	\lim_{s\to1}(1-s)^{\frac2q}
	\|f\|^{\frac q2}_{W^{s,\frac q2}_H(\Omega;G_2)} < \infty ,
	\]
	then
	\[
	\|D_Hf\|_{L^{\frac q2}}^{\frac q2}
	\lesssim
	\lim_{s\to1}(1-s)^{\frac2q}
	\|f\|^{\frac q2}_{W^{s,\frac q2}_H(\Omega;G_2)} ,
	\]
	and consequently $f\in W^{1,p}_H(\Omega;G_2)$ with
	$D_Hf:=D_H(\pi f)$. The strategy is to reduce the assertion to the
	scalar interpolation theorem of Maalaoui and Pinamonti
	\cite[Theorem~5.1]{MAALAOUI201991} applied to $\pi f$, the
	contact part being supplied by Theorem~\ref{thm:contact-direct}. The
	converse inclusion
	$W^{1,q}_H(\Omega;G_2)\subset\bigcap_{s\in(0,1)}\dot
	W^{s,q}_H(\Omega;G_2)$, with the natural estimate, is
	Theorem~\ref{thm:interp-converse} and follows from the Poincar\'e
	inequality. 
	
	\paragraph{(III) The Korevaar--Schoen energy on Carnot groups
		(Section~\ref{sec:energy}).}
	For $f\in W^{s,q}_H(\Omega;G_2)$ we define (cf.\ Definition~\ref{def:KS-energy})
	\[
	E_{s,q}(f)=\limsup_{r\to0}\frac{1}{r^{\nu_1+sq}}
	\int_{\Omega}\int_{B(x,r)}
	d_K\big(f(x),f(y)\big)^{q}\,dy\,dx ,
	\]
	where $d_K$ is the layer-adapted quasi-metric, comparable to $d_{cc}$.
	The key technical step is Lemma~\ref{lem:taylor}, a stratified
	expansion in the spirit of
	\cite[Theorem~2.13]{Capogna2001} and
	\cite{Arena2010}, which identifies the leading vertical term:
	\[
	\frac{\varphi^{k}\big(f(p),f(\tilde p)\big)}
	{d_K(p,\tilde p)^{2s}}
	=
	\Big\langle \sum_j T_j f^{*}(\omega^{k})(p),\,
	\frac{\varphi(p,\tilde p)}
	{\big(|\pi(p)-\pi(\tilde p)|^{2}
		+\sum_l|\varphi^{l}(p,\tilde p)|\big)^{s}}
	\Big\rangle
	+o\big(d_K(p,\tilde p)^{2-2s}\big)
	\]
	in $L^{q/2}(\Omega)$. From this we obtain
	(Theorem~\ref{thm:energy}) the following dichotomy, which is the main
	result of Section~\ref{sec:energy}:
	
	\begin{itemize}
		\item for $0<s<1$, the vertical part disappears in the limit and the
		energy depends only on the horizontal projection:
		\[
		E_{s,q}(f)=\limsup_{r\to0}\frac{1}{r^{\nu_1+sq}}
		\int_{\Omega}\int_{B(x,r)}
		|\pi f(x)-\pi f(y)|^{q}\,dy\,dx ;
		\]
		
		\item for $s=1$, a vertical \emph{defect} survives:
		\[
		E_{1,q}(f)\ \ge\ \limsup_{r\to0}\frac{1}{r^{\nu_1+q}}
		\int_{\Omega}\int_{B(x,r)}
		|\pi f(x)-\pi f(y)|^{q}\,dy\,dx
		\;+\;
		C\sum_k\big\|T f^{*}(\omega^{k})\big\|_{L^{\frac q2}}^{\frac q2} .
		\]
	\end{itemize}
	
	As a consequence
	(Corollary~\ref{cor:vertical}), for $f\in W^{1,q}_H(\Omega;G_2)$ these
	vertical gradients are controlled by $\|D_Hf\|_{L^{q}}$, and the
	second-layer components of $f$ are genuinely Sobolev-regular in the
	vertical directions.
	
	We then turn to the variational problem. Since Carnot groups are
	Cheeger spaces, the $\Gamma$-convergence machinery of Alonso Ruiz and
	Baudoin \cite[Corollary~3.13]{ruiz2024korevaarschoenpenergiesgammalimitscheeger} applies, and we show
	that a minimizer of $E_{s,q}$ satisfies the nonlocal
	Euler--Lagrange equation
	\[
	(\Delta_q)^{s}_{KS}\,\tilde g
	=\lim_{r\to0}\frac{2}{r^{\nu_1+sq}}
	\int_{\Omega}\int_{B(x,r)}
	|\tilde g(x)-\tilde g(y)|^{q-2}
	\big(\tilde g(x)-\tilde g(y)\big)\,dy\,dx = 0 ,
	\]
	a fractional $q$-Laplace equation of Korevaar--Schoen type (cf.\
	Definition~\ref{def:frac-q-laplace}). Combined with Theorem~\ref{thm:energy}, this reduces
	the minimization of $E_{s,q}$ on $W^{s,q}_H(\Omega;G_2)$ to a
	minimization problem for the horizontal components.
	
	\paragraph{(IV) The Jacobian of fractional Sobolev mappings
		(Section~\ref{sec:jacobian}).}
	Finally, we show that the distributional Jacobian determinant can not be
	defined for fractional Sobolev mappings $f\in W_H^{s,q}(\Omega;G_2)$ with the method on Euclidean space \cite{hajłasz2025holdercontinuousmappingsdifferential}, when $0<s<1$. 
	
	\section{Preliminary}
	We recall the basic properties of Carnot groups. 
	A Lie algebra  $\mathfrak{g}$  is stratified if it has a decomposition  $\mathfrak{g}=\mathfrak{g}^{[1]} \oplus \cdots \oplus \mathfrak{g}^{[s]}$  with  $\left[\mathfrak{g}^{[1]}, \mathfrak{g}^{[k]}\right]=\mathfrak{g}^{[k+1]}$  for all $ k \geq 1$ , where we denote  $\mathfrak{g}^{[k]}=\{0\}$  for  $k \geq s+1 $. A (sub-Riemannian) Carnot group is a simply connected nilpotent Lie group $ G$  with a stratified Lie algebra  $\mathfrak{g}$  equipped with an inner product on the horizontal layer  $\mathfrak{g}^{[1]}$ . For convenience, We will need an inner product not only on the horizontal layer  $\mathfrak{g}^{[1]}$ , but instead on the whole Lie algebra  $\mathfrak{g}$ . We always equip the Lie algebra of a Carnot group with an inner product for which the layers  $\mathfrak{g}^{[1]} \oplus \cdots \oplus \mathfrak{g}^{[s]}$  are pairwise orthogonal. The homogeneous dimension of $ G$  is  $Q_{G}:=\sum_{k=1}^{s} k \operatorname{dim}\left(\mathfrak{g}^{[k]}\right) $. The (horizontal) rank of  $G$  is the dimension of the horizontal layer  $\mathfrak{g}^{[1]} $.
	
	Given $ g \in G$ , we denote by  $L_{g}: G \rightarrow G$  the left-translation  $L_{g}(h)= g h$ . We identify  $T_{e} G$  with  $\mathfrak{g}$ . An absolutely continuous curve  $\gamma:[0,1] \rightarrow G$  is horizontal if for almost every  $t \in[0,1] $ its left-trivialized derivative  $\left(L_{\gamma(t)}^{-1}\right) * \dot{\gamma}(t)$  is contained in the horizontal layer $ \mathfrak{g}^{[1]} $. The length of the horizontal curve  $\gamma$  is
	\begin{equation*}
		\ell(\gamma)=\int_{0}^{1}\left\|\left(L_{\gamma(t)}^{-1}\right)_{*} \dot{\gamma}(t)\right\| d t
	\end{equation*}
	where the norm is the one induced by the inner product on  $\mathfrak{g}^{[1]}$ . The sub-Riemannian distance between two points  $g, h \in G $ is
	\begin{equation*}
		d(g, h)=\inf \{\ell(\gamma) \mid \gamma:[0,1] \rightarrow G \text { horizontal, } \gamma(0)=g, \gamma(1)=h\} .
	\end{equation*}
	
	By construction, this distance is left-invariant.
	Carnot groups also admit a one-parameter group of automorphisms, known as the dilations  $\delta_{\lambda}: G \rightarrow G, \lambda>0 $. The associated Lie algebra automorphisms, also denoted  $\delta_{\lambda}: \mathfrak{g} \rightarrow \mathfrak{g} $, are the linear maps defined on each layer of the stratification as
	\begin{equation*}
		\delta_{\lambda}(X)=\lambda^{k} X, \quad X \in \mathfrak{g}^{[k]}, \quad 1 \leq k \leq s .
	\end{equation*}
	The sub-Riemannian distance is 1-homogeneous with respect to these dilations, i.e.,  $d\left(\delta_{\lambda} g, \delta_{\lambda} h\right)=\lambda d(g, h) $.
	
	In conventions, all Carnot groups are equipped with a left-invariant metric and a Haar
	measure $\mathcal H^{\nu_1}$, with respect to which all integrals are
	taken. We write $A\lesssim B$ if $A\le CB$ for a constant $C$
	depending only on the groups, the domain and the exponents, and
	$A\approx B$ if $A\lesssim B\lesssim A$.
	
	\begin{definition}
		Denote $W_H^{s,q}(\Omega;G_2)$ as the fraction Sobolev mappings space respected to $C-C$ metric on Carnot group $G_2$
		\begin{equation*}
			\int_{\Omega}\int_{\Omega}\frac{d_{cc}(f(x),f(y))^q}{d_{cc}(x,y)^{\nu_1+sq}}dxdy <\infty
		\end{equation*} 
		for any $f\in W_H^{s,q}(\Omega;G_2)$, where $G_1$ is a Carnot group and $\Omega\subset G_1$.
	\end{definition}
	Because in this paper we would deeply take advantage of the functions on Carnot groups, it is necessary to define the fraction Sobolev functions space on Carnot groups.
	\begin{definition}
		Denote $W^{s,q}(\Omega;\mathbb{R}^{n_2})$ as the fraction Sobolev mappings space respected to Euclidean metric on $\mathbb{R}^{n_2}$
		\begin{equation*}
			\int_{\Omega}\int_{\Omega}\frac{|f(x)-f(y)|^q}{d_{cc}(x,y)^{\nu_1+sq}}dxdy <\infty
		\end{equation*} 
		for any $f\in W^{s,q}(\Omega;G_2\mathbb{R}^{n_2})$, where $G_1$ is a Carnot group and $\Omega\subset G_1$.
	\end{definition}

	\section{Weak contact equation holds for fractional Sobolev mappings on Carnot groups}
	\label{sec:contact}
	The purpose of this section is to answer question (a) of the introduction: we
	explain in what sense a merely fractional Sobolev mapping with values in a
	Carnot group satisfies a \emph{contact} equation, even though no pointwise Pansu
	differential is available.
	
	The section is organised as follows. We first fix explicit coordinates on a
	step-three target group, in which the group law, the left-invariant frame, the
	contact forms and the layer-adapted quasi-metric $d_K$ can all be written down.
	This choice is made for the sake of readability rather than out of necessity:
	every estimate below uses only the stratification and the resulting homogeneity
	of $d_K$, and the same computation applies verbatim to groups of arbitrary
	step. Against this background we then prove two results pointing in opposite
	directions.
	\begin{itemize}
		\item Theorem~\ref{thm:contact-direct} (the direct statement) says that,
		after mollification, the contact defect $X(f_\varepsilon^{*}\omega)$ becomes
		an integrable quantity which vanishes as $\varepsilon\to0$; in other words,
		fractional Sobolev mappings are \emph{asymptotically contact} in exactly the
		sense in which Pansu differentiable maps are contact.
		\item Theorem~\ref{thm:contact-converse} (the converse) says that the weak
		contact equation, combined with the $W^{s,q}$ regularity of the horizontal
		projection $\pi f$, already controls the full group-valued seminorm, up to an
		arbitrarily small loss $\lambda>0$ of differentiability.
	\end{itemize}
	Taken together, these two statements are the fractional substitute for Pansu
	differentiability, and they are the input to Sections~\ref{sec:interp}
	and~\ref{sec:energy}. Note the asymmetry between the two: the first is a
	statement about mollifications and holds for every $f\in W^{s,q}_H$, whereas the
	second is a statement about the map itself and loses an arbitrarily small amount
	of smoothness. This loss is not an artefact of the method --- it reflects the
	fact that the vertical components are controlled only through a quadratic
	relation, and it will reappear in Sections~\ref{sec:interp}--\ref{sec:jacobian}.
	
	For convenience, we just take step three Carnot group as an example to showcase the result, as for higher step Carnot groups the conclusion holds as well. 
	
	Let  $G $ be a step three Carnot group. Let  $d_{1}, \ldots, d_{r}$  be a basis for  $\mathfrak{g}_{1}, e_{1}, \ldots, e_{s}$  a basis for  $\mathfrak{g}_{2} $, and  $f_{1}, \ldots, f_{t}$  a basis for  $\mathfrak{g}_{3}$ . We write
	\begin{equation*}
		\begin{array}{l}
			{\left[d_{i}, d_{j}\right]=\sum_{k=1}^{s} \alpha_{k}^{i j} e_{k}} \\
			{\left[d_{i}, e_{k}\right]=\sum_{m=1}^{t} \beta_{m}^{i k} f_{m}}
		\end{array}
	\end{equation*}
	with all other bracket relations trivial.
	As in the step two case, we can identify  $G$  with  $\mathbb{R}^{r+s+t}$  equipped with the following operation via coordinates of the first kind:
	\begin{equation*}
		\left(A_{i}, B_{k}, C_{m}\right) \star\left(a_{i}, b_{k}, c_{m}\right)=\left(\mathcal{A}_{i}, \mathcal{B}_{k}, \mathcal{C}_{m}\right)
	\end{equation*}
	where
	\begin{equation*}
		\begin{array}{l}
			\mathcal{A}_{i}=A_{i}+a_{i} \\
			\mathcal{B}_{k}=B_{k}+b_{k}+\frac{1}{2} \sum_{i<j} \alpha_{k}^{i j}\left(A_{i} a_{j}-a_{i} A_{j}\right) \\
			\mathcal{C}_{m}=C_{m}+c_{m}+\frac{1}{2} \sum_{i, j} \beta_{m}^{i j}\left(A_{i} b_{j}-B_{j} a_{i}\right)+\frac{1}{12} \sum_{l, k} \sum_{i<j}\left(A_{l}-a_{l}\right) \alpha_{k}^{i j}\left(A_{i} a_{j}-a_{i} A_{j}\right) \beta_{m}^{l k}
		\end{array}
	\end{equation*}
	$\operatorname{Observe}\left(A_{i}, B_{k}, C_{m}\right)^{-1}=\left(-A_{i},-B_{k},-C_{m}\right)$  and
	
	\begin{equation*}
		\left(A_{i}, B_{k}, C_{m}\right)^{-1} \star\left(a_{i}, b_{k}, c_{m}\right)=\left(\tilde{\mathcal{A}_{i}}, \tilde{\mathcal{B}_{k}}, \tilde{\mathcal{C}_{m}}\right),
	\end{equation*}
	where
	\begin{equation}
		\begin{array}{l}
			\tilde{\mathcal{A}}_{i}=a_{i}-A_{i} \\
			\tilde{\mathcal{B}_{k}}=b_{k}-B_{k}-\frac{1}{2} \sum_{i<j} \alpha_{k}^{i j}\left(A_{i} a_{j}-a_{i} A_{j}\right) \\
			\tilde{\mathcal{C}_{m}}=c_{m}-C_{m}-\frac{1}{2} \sum_{i, j} \beta_{m}^{i j}\left(A_{i} b_{j}-B_{j} a_{i}\right)+\frac{1}{12} \sum_{l, k} \sum_{i<j}\left(A_{l}+a_{l}\right) \alpha_{k}^{i j}\left(A_{i} a_{j}-a_{i} A_{j}\right) \beta_{m}^{l k}
		\end{array}
	\end{equation}\label{metric-coord}
	Left-translating the canonical basis at the origin, we obtain the left-invariant vector fields
	\begin{equation*}
		\begin{aligned}
			X^{i}= & \frac{\partial}{\partial A_{i}}-\frac{1}{2}\sum_{k=1}^{s} \sum_{i<j} \alpha_{k}^{i j} A_{j} \frac{\partial}{\partial B_{k}} \\
			& +\sum_{m=1}^{t}\left[-\frac{1}{2} \sum_{j=1}^{s} B_{j} \beta_{m}^{i j}+\frac{1}{12} \sum_{l=1}^{r} \sum_{k=1}^{s} A_{l}\left(\sum_{i<j} \alpha_{k}^{i j} A_{j}\right) \beta_{m}^{l k}\right] \frac{\partial}{\partial C_{m}} \\
			Y^{k}= & \frac{\partial}{\partial B_{k}}+\sum_{m=1}^{t}\left(\frac{1}{2} \sum_{i=1}^{r} \beta_{m}^{i k} A_{i}\right) \frac{\partial}{\partial C_{m}} \\
			Z^{m}= & \frac{\partial}{\partial C_{m}}
		\end{aligned}
	\end{equation*}
	It is clear that  $\left\{X^{i}\right\}_{i} \cup\left\{Y^{k}\right\}_{k} \cup\left\{Z^{m}\right\}_{m}$  forms a basis for  $\operatorname{Lie}\left(\mathbb{R}^{n}, \star\right) $. Moreover, we have the expected step three stratification of $ \operatorname{Lie}\left(\mathbb{R}^{n}, \star\right) $ :
	\begin{equation*}
		\operatorname{Lie}\left(\mathbb{R}^{r+s+t}, \star\right)=\left\langle X^{i}\right\rangle_{1 \leq i \leq r} \oplus\left\langle Y^{k}\right\rangle_{1 \leq k \leq s} \oplus\left\langle Z^{m}\right\rangle_{1 \leq m \leq t}
	\end{equation*}
	In fact, one can show using the Jacobi identity that the linear map  $\varphi: \operatorname{Lie}\left(\mathbb{R}^{n}, \star\right) \rightarrow \mathfrak{g}$  induced by
	\begin{equation*}
		X^{i} \mapsto d_{i}, \quad Y^{k} \mapsto e_{k}, \quad Z^{m} \mapsto f_{m}
	\end{equation*}
	is a Lie algebra isomorphism.
	The contact forms are given by
	\begin{equation*}
		\begin{aligned}
			\omega_{1}^{k} & :=d B_{k}-\frac{1}{2}\sum_{i=1}^{r} \sum_{i<j} \alpha_{k}^{i j}\left( A_{i}d A_{j}- A_{j}d A_{i}\right)  \\
			\omega_{2}^{m} & :=d C_{m}+\frac{1}{2}\sum_{i=1}^{r} \sum_{j=1}^{s} B_{j} \beta_{m}^{i j}d A_{i}-\frac{1}{12} \sum_{l=1}^{r} \sum_{k=1}^{s}\sum_{i<j} A_{l} \alpha_{k}^{i j}\left( A_{i}d A_{j}- A_{j}d A_{i}\right) \beta_{m}^{l k} 
		\end{aligned}
	\end{equation*}
	We have
	\begin{equation*}
		H\left(\mathbb{R}^{r+s+t}, \star\right)=\bigcap_{k=1}^{s} \operatorname{ker} \omega_{1}^{k} \cap \bigcap_{m=1}^{t} \operatorname{ker} \omega_{2}^{m}
	\end{equation*}
	so that a tangent vector $ v $ lies in  $H_{p}\left(\mathbb{R}^{r+s+t}, \star\right)$  if and only if  $\left(\omega_{1}^{k}\right)_{p}(v)=\left(\omega_{2}^{m}\right)_{p}(v)=0$  for all $ k$  and  $m $.
	
	Every step $n$ Carnot group has the relation that
	if  $K \subset \mathbb{G}$  is compact, then there is a constant $ C=C(K) \geq 1 $ such that
	\begin{equation}
		C^{-1}|p-q| \leq d_{c c}(p, q) \leq C|p-q|^{1 / n} \quad 
	\end{equation}\label{metric-compar}
	for all $ p, q \in K$ .
	
	We can define the quasi-metric 
	\begin{equation}\label{metric-K}
		d_K(p,q)=\sum_{i=1}^{n}\sum_{j=1}^{dim(V_i)}|\pi_{i}^j(q^{-1}p)|^{\frac{1}{i}}
	\end{equation}
	where $\pi_{i}^j$ is the projection to the i-th layer and the j-th coordinate.
	\begin{equation*}
		\begin{aligned}
			&\pi_{1}^i(q^{-1}p)=a_{i}-A_{i}\\
			&\pi_{2}^k(q^{-1}p):=\varphi_{1}^k(p, q)=b_{k}-B_{k}-\frac{1}{2} \sum_{i<j} \alpha_{k}^{i j}\left(A_{i} a_{j}-a_{i} A_{j}\right)\\
			&\pi_{3}^m(q^{-1}p):=\varphi_{2}^m(p, q)=c_{m}-C_{m}-\frac{1}{2} \sum_{i, j} \beta_{m}^{i j}\left(A_{i} b_{j}-B_{j} a_{i}\right)+\frac{1}{12} \sum_{l, k} \sum_{i<j}\left(A_{l}+a_{l}\right) \alpha_{k}^{i j}\left(A_{i} a_{j}-a_{i} A_{j}\right) \beta_{m}^{l k}
		\end{aligned}
	\end{equation*}
	
	It is worth to mention that $d_K$ is comparable with C-C metric on Carnot group.
	\begin{equation*}
		C_1d_{K}(x,y)\leq d_{cc}(x,y) \leq C_2 d_{K}(x,y),
	\end{equation*}
	for constant $C_2 >C_1>0$.
	
	It is known that for Pansu differentiable map $f$  the Pansu differential is a Lie algebra homorophism. Thus, if the vector field $X$ and contact form $\omega$ are not from the same layer, then $X(f_P^*\omega)=0$. For instance, on Heisenberg group $\mathbb{H}^1$
	\begin{equation*}
		\begin{aligned}
			D_Pf
			&=\begin{bmatrix}
				X_1(f_P^*dx_1) & X_1(f_P^*dx_2) & 0\\
				X_2(f_P^*dx_1) & X_2(f_P^*dx_2) & 0\\
				0 & 0 & Z(f_P^*\omega_{1})\\
			\end{bmatrix}
		\end{aligned}
	\end{equation*}
	Although the Pansu differential of Hölder continuous maps and fractional Sobolev maps do not exist, we can generalize the Pansu differential with the help of mollification.
	
	The discussion above shows what cannot be done: for a map that is merely of
	class $W^{s,q}_H$ there is no pointwise Pansu differential, hence no pointwise
	contact equation. What can be done is to smooth the map and to measure the
	resulting contact defect. For a Pansu differentiable map this defect vanishes
	identically; the content of the next theorem is that for $f\in W^{s,q}_H$ it
	vanishes in the limit, at a rate dictated by the fractional seminorm. Besides
	the defect estimate we record that mollification does not destroy the horizontal
	part of the map --- a fact that will be used repeatedly, in particular in
	Theorem~\ref{thm:contact-converse} and in Section~\ref{sec:jacobian}.
	
	In the following theorem, we view a mapping $f \in W_H^{s,q}\left(\Omega ; G_2\right)$ as a mapping from Carnot group $G_1$ to vector space $G_2\cong \mathbb{R}^{n_2}$ equiped with Euclidean metric.
	\begin{thm}\label{thm:contact-direct}\label{thm-approx}
		Suppose that  $f \in W_H^{s,q}\left(\Omega ; G_2\right)$ for $s>\frac{1}{2}$ and $q\geq2$, where $ G_2$ is a step-2 Carnot group and $\nu_2$ is the topological  dimension of $G_2$, $\Omega \subset G_1$  is open , $X$ is a left-invariant  vector field belonging to the horizontal layer. If  $B\left(x_{o}, 2 r\right) \subset \Omega $, then  for all  $0<\varepsilon<r$,
		\begin{equation}\label{approx}
			Xf_\varepsilon^*(\omega) \to 0 \  \text{in}\  L_{loc}^\frac{q}{2}(\Omega;\mathbb{R}^{\nu_2})\\
		\end{equation}
		and $\pi f_\varepsilon\to \pi f $ in $W^{s,q}(\Omega)$,
		where $f_{\varepsilon}=f * \eta_{\varepsilon}=\int_{\mathbb{G}_1 } f(y) \eta_{\varepsilon}\left(y^{-1} x\right) d y=\int_{\mathbb{G}_1 } f(xy^{-1}) \eta_{\varepsilon}\left(y\right) d y$  and $\eta_{\varepsilon}$ is a smooth function on Carnot group $G_1$ with a compact supported set $B(e,\varepsilon)$ and
		\begin{equation*}
			\int_{G_1}\eta_{\varepsilon}=1.
		\end{equation*}
		\begin{proof}
			The proof consists of three steps. First we rewrite the contact defect
			$X(f_\varepsilon^{*}\omega_1^k)$ as a double average of the vertical
			discrepancy $\varphi_k$, by inserting the definition of the mollification
			twice; this is the only place where the algebraic structure of the target
			enters. Then we dominate this average by the Hardy--Littlewood maximal
			function of the fractional energy density. Finally we distinguish the
			cases $q=2$ and $q>2$: in the former the pointwise bound can be
			integrated at once by Fubini, whereas in the latter we first descend from
			$s$ to $s-\lambda$ in order to gain the missing integrability. The
			assumption $s>\frac12$ is used only in this last step.
			
			Recall that  $\varphi(p, q)$  was defined in \eqref{metric-K}. Let  $p \in B\left(x_{o}, r\right)$  and let  $\varepsilon \in(0, r)$ . In what follows we will identify  $\left(f_{\varepsilon}^{*} \alpha\right)(p)$  with the vector (of equal length):
			\begin{equation*}
				\begin{aligned}
					&X(f_{\varepsilon}^{*} \omega_{1}^k)(p)\\
					=&X( f_{\varepsilon}^{B_{k}})(p)+\frac{1}{2}\alpha_{k}^{ij} \sum_{i<j}\left(f_{\varepsilon}^{A_{j}}(p) X(f_{\varepsilon}^{A_{i}})(p)-f_{\varepsilon}^{A_{i}}(p) X( f_{\varepsilon}^{A_{j}})(p)\right)\\
					=& 
					\varepsilon^{-\nu_1-1} \int_{B_{\varepsilon}}\left(f^{B_{k}}(pz^{-1})-f_{\varepsilon}^{B_{k}}(p)+\frac{1}{2}\alpha_{k}^{ij} \sum_{i<j}\left(f_{\varepsilon}^{A{j}}(p) f^{A_{i}}(pz^{-1})-f_{\varepsilon}^{A_{i}}(p) f^{A_{j}}(pz^{-1})\right)\right)\\ 
					&\cdot X(\eta\left(\delta_{\varepsilon^{-1}}(z)\right)) d z \\
					=&\varepsilon^{-\nu_1-1} \int_{B_{\varepsilon}} \varphi_{k}\left(f(pz^{-1}), f_{\varepsilon}(p)\right) X(\eta\left(\delta_{\varepsilon^{-1}}(z)\right)) d z \\
					=&\varepsilon^{-\nu_1-1} \int_{B_{\varepsilon}} \int_{B_{\varepsilon}} \varphi_{k}(f(pz^{-1}), f(pw^{-1})) \eta_{\varepsilon}(w) X(\eta\left(\delta_{\varepsilon^{-1}}(z)\right)) d w d z
				\end{aligned}
			\end{equation*}
			In the second equality we used the fact that $ \int_{B_{\varepsilon}} f_{\varepsilon}^{t_{k}}(p) X( \eta\left(\delta_{\varepsilon^{-1}}(z)\right)) d z=0 $. Easy verification of the last equality is left to the reader. Note that \eqref{metric-K} yields
			\begin{equation*}
				|\varphi_{k}(f(pz^{-1}), f(pw^{-1}))| \leq d_{K}(f(pz^{-1}), f(pw^{-1}))^{2} 
			\end{equation*}
			Therefore,
			\begin{equation*}
				\begin{aligned}
					\left|X(f_{\varepsilon}^{*} \omega_{1}^k)(p)\right| 
					&\leq\varepsilon^{-\nu_1-1} \int_{B_{\varepsilon}} \int_{B_{\varepsilon}} \varphi_{k}(f(pz^{-1}), f(pw^{-1})) \eta_{\varepsilon}(w) X(\eta\left(\delta_{\varepsilon^{-1}}(z)\right)) d w d z\\
					&\leq \varepsilon^{-\nu_1-1} \int_{B_{\varepsilon}} \int_{B_{\varepsilon}} d_{K}(f(pz^{-1}), f(pw^{-1}))^{2} \eta_{\varepsilon}(w) X(\eta\left(\delta_{\varepsilon^{-1}}(z)\right)) d w d z\\
					&\lesssim \varepsilon^{-\nu_1-1} \int_{B_{\varepsilon}} \int_{B_{\varepsilon}} d_{cc}(f(pz^{-1}), f(pw^{-1}))^{2} \eta_{\varepsilon}(w) X(\eta\left(\delta_{\varepsilon^{-1}}(z)\right)) d w d z\\
					&\lesssim \varepsilon^{-2 \nu_1-1} \int_{B_{\varepsilon}} \int_{B_{\varepsilon}} d_{cc}(f(pz^{-1}), f(pw^{-1}))^{2}  d w d z\\
				\end{aligned}
			\end{equation*}
			denote $x=pz^{-1}$ and $y=pw^{-1}$,
			thus $z^{-1},w^{-1}\in B_\varepsilon^{-1}=B_\varepsilon$ and $pz^{-1},pw^{-1}\in p \cdot B_\varepsilon$. Since the Haar measure $dw$ and $dz$ is invariant under left and right transformation,
			\begin{equation*}
				\begin{aligned}
					&\varepsilon^{-2 \nu_1-1} \int_{B_{\varepsilon}} \int_{B_{\varepsilon}} d_{cc}(f(pz^{-1}), f(pw^{-1}))^{2}  d w d z\\
					&=\varepsilon^{-2 \nu_1-1} \int_{p\cdot B_{\varepsilon} } \int_{p\cdot B_{\varepsilon}} d_{cc}(f(x), f(y))^{2}  d x d y\\
					&\leq \varepsilon^{-2\nu_1-1} \cdot 2^{\nu_1+2s}\cdot  \varepsilon^{\nu_1+2s} \int_{B_{\varepsilon}(p)}  \int_{B_{\varepsilon}(p)} \frac{d_{cc}(f(x), f(y))^{2}}{d_{cc}(x, y)^{\nu_1+2s}}  d x d y\\
					&\leq 2^{\nu_1+2s} \varepsilon^{-\nu_1+2s-1}  \int_{B_{\varepsilon}(p)}\int_{\Omega} \frac{d_{cc}(f(x), f(y))^{2}}{d_{cc}(x, y)^{\nu_1+2s}}  d x d y\\
				\end{aligned}
			\end{equation*}
			By defining the Hardy-Littlewood maximal function 
			\begin{equation}\label{maximal-g}
				\mathcal{M}(g)(p):= \sup_{r>0} \frac{1}{|B(p,r)|}\int_{B(p,r)} |g(y)|dy.
			\end{equation}
			and denote 
			\begin{equation}
				g(y):= \int_{\Omega} \frac{d_{cc}(f(x),f(y))^{2}}{d_{cc}(x,y)^{\nu_1+2s}}dx,
			\end{equation}   
			then
			\begin{equation}\label{eq:pointwise-defect}
				\begin{aligned}
					\left|X(f_{\varepsilon}^{*} \omega_{1}^k)(p)\right| 
					&\lesssim  \varepsilon^{-\nu_1+2s-1}  \int_{\lambda B_{\varepsilon}(p)}g(y) d y \\
					&\lesssim \varepsilon^{2s-1} \mathcal{M}(g)(p). 
				\end{aligned}
			\end{equation}
			We now carry out the last step. For $q=2$ we do not need the boundedness
			of the maximal function: the pointwise estimate \eqref{eq:pointwise-defect}
			can be integrated directly, and Fubini's theorem already gives the
			required decay.
			\begin{equation*}
				\begin{aligned}
					\|X(f_{\varepsilon}^{*} \omega_{1}^k)\|_{L^1(W)}   
					&\lesssim \varepsilon^{2s-1}  \int_{\Omega} \int_{B_{\varepsilon}(p)} \int_{\Omega} \frac{d_{cc}(f(x), f(y))^{2}}{d_{cc}(x, y)^{\nu_1+2s}}  d x dy dp \\
					&= C \cdot \varepsilon^{2s-1}   \int_{\Omega}\int_{\Omega}\int_{\Omega} \frac{d_{cc}(f(x), f(y))^{2}}{d_{cc}(x, y)^{\nu_1+2s}}   \cdot I_{ B_{\varepsilon}(p)}(y) d x dy dp \\
					&= C \cdot \varepsilon^{2s-1}   \int_{\Omega}\int_{\Omega}\int_{\Omega} \frac{d_{cc}(f(x), f(y))^{2}}{d_{cc}(x, y)^{\nu_1+2s}}   \cdot I_{ B_{\varepsilon}(x)}(p) d x dy dp \\
					&\leq C \cdot \varepsilon^{2s-1}    \int_{\Omega}\int_{\Omega}\int_{\Omega} \frac{d_{cc}(f(x), f(y))^{2}}{d_{cc}(x, y)^{\nu_1+2s}}  d x dy dp \\
					&\lesssim \varepsilon^{2s-1} \cdot \mathcal{H}^{\nu_1}(\Omega) \cdot [f]_{W_H^{s,2}(\Omega)}^2 \\
					&\to 0.
				\end{aligned}
			\end{equation*}
			For $q>2$ we argue instead through the maximal function, and to do so we
			must know that the energy density $g$ belongs to $L^{\frac q2}$; this is
			exactly where the assumption $s>\frac12$ is spent, since it leaves room
			for the descent from $s$ to $s-\lambda$. The rest of the argument is
			unchanged, but the final integration is now performed in
			$L^{\frac q2}$.
			Because $s> \frac{1}{2}$, there is a small number $\lambda>0$ such that $s-\lambda>\frac{1}{2}$, then we can view $f$ as a mapping in $W_H^{s-\lambda,p}(\Omega;G_2)$.
			For the need of proving, we need to replace the $s$ by $s-\lambda$ in the inequality above and get:
			\begin{equation*}
				\begin{aligned}
					\left|X(f_{\varepsilon}^{*} \omega_{1}^k)(p)\right| 
					&\lesssim \varepsilon^{2\left(s-\lambda\right)-1} \mathcal{M}(g)(p), 
				\end{aligned}
			\end{equation*}
			where
			\begin{equation}
				g(y)= \int_{\Omega} \frac{d_{cc}(f(x),f(y))^{2}}{d_{cc}(x,y)^{\nu_1+2\left(s-\lambda\right)}}dx.
			\end{equation} 
			
			And for compact subset $W$,
			
			\begin{equation*}
				\begin{aligned}
					\|X(f_{\varepsilon}^{*} \omega_{1}^k)\|_{L^{\frac{q}{2}}(W)}^{\frac{q}{2}} 
					&= \int_{W}\varepsilon^{2\left(s-\lambda\right)-1} \mathcal{M}(g)^\frac{q}{2}  \\
					&\lesssim \varepsilon^{2\left(s-\lambda\right)-1} \|g \|_{L^{\frac{q}{2}}(W)}^\frac{q}{2}\\
					&\leq \varepsilon^{2\left(s-\lambda\right)-1}  \int_{\Omega}\left(\int_{\Omega} \frac{d_{cc}(f(x), f(y))^{2}}{d_{cc}(x, y)^{\nu_1+2\left(s-\lambda\right)}}  d x\right)^{\frac{q}{2}} d y\\
					&\leq \varepsilon^{2\left(s-\lambda\right)-1}  \int_{\Omega}\int_{\Omega} \frac{d_{cc}(f(x), f(y))^{q}}{d_{cc}(x, y)^{\nu_1+qs}}  d x \cdot \left(\int_{\Omega}\frac{d_{cc}(x, y)^{\frac{2 \lambda q}{q-2}}}{d_{cc}(x, y)^{\nu_1}}  dx \right)^{\frac{q-2}{q}} d y\\
					&=C_{q,\lambda}\varepsilon^{2\left(s-\lambda\right)-1} \cdot [f]_{W_H^{s,q}(\Omega)}^q
				\end{aligned}
			\end{equation*}
			for any small $\lambda>0$.
			Thus, $X(f_{\varepsilon}^{*} \omega_{1}^k)\rightarrow 0$ in $L^\frac{q}{2}(W)$ for $q>2$. 
			
			From (\ref{metric-compar}),
			\begin{equation*}
				\begin{aligned}
					[\pi f]_{W^{s,q}(\Omega)}^q&=\int_{\Omega}\int_{\Omega} \frac{|\pi f(x)- \pi f(y)|^{q}}{d_{cc}(x, y)^{\nu_1+qs}}  d x dy\\
					&\leq \int_{\Omega}\int_{\Omega} \frac{d_{K}(f(x), f(y))^{q}}{d_{cc}(x, y)^{\nu_1+qs}}  d x dy\\
					&\lesssim \int_{\Omega}\int_{\Omega} \frac{d_{cc}(f(x), f(y))^{q}}{d_{cc}(x, y)^{\nu_1+qs}}  d x dy\\
					&=[f]_{W_H^{s,q}(\Omega)}^q
				\end{aligned}
			\end{equation*}
			It remains to justify the second assertion. Since the projection $\pi$
			is $1$-Lipschitz as a map from $(G_2,d_K)$ to $\mathbb R^{n_2}$, we have
			$[\pi f]_{W^{s,q}(\Omega)}\lesssim [f]_{W^{s,q}_H(\Omega)}$; the claim
			$\pi f_\varepsilon\to\pi f$ in $W^{s,q}(\Omega)$ then follows from the
			standard approximation properties of group mollification, see
			\cite{Saka1979} for the corresponding theory of fractional Sobolev
			functions on Carnot groups.

		\end{proof}	
	\end{thm}	
	
	\begin{remark}\label{rem:contact-direct-scope}
		Two features of Theorem~\ref{thm:contact-direct} deserve to be singled
		out, because both of them will be used later. First, the convergence
		$X(f_\varepsilon^{*}\omega)\to0$ takes place in $L^{\frac q2}$ and not in
		$L^{q}$: the contact defect is a \emph{second-order} object (the vertical
		discrepancy $\varphi_k$ is homogeneous of degree $2$), and the estimate
		\eqref{eq:pointwise-defect} necessarily loses half of the integrability.
		This halving of the exponent is the same phenomenon that makes the
		vertical components live in $L^{\frac q2}$ in
		Theorem~\ref{thm:energy} below. Secondly, for $q>2$ the convergence is
		obtained only after replacing $s$ by $s-\lambda$; we shall systematically
		keep track of this loss, since all the results of
		Sections~\ref{sec:interp}--\ref{sec:jacobian} inherit it.
	\end{remark}
	
	Theorem~\ref{thm:contact-direct} says that every fractional Sobolev mapping is
	asymptotically contact. We now turn to the converse, which is the deeper of
	the two statements and the one that will be used in
	Section~\ref{sec:interp}: if a map is contact in the weak sense and if its
	horizontal projection has the expected regularity, then the whole map lies in
	$W^{s-\lambda,q}_H(\Omega;G_2)$, for every $\lambda>0$ sufficiently small.
	
	\begin{thm}\label{lem:X-dx}\label{varepsilon s-1}
		Suppose that  $f \in W^{1,q}\left(\Omega ; G_2\right)$ , where $ G_2$ is a step-2 Carnot group and $\nu_2$ is the topological  dimension of $G_2$, $\Omega \subset G_1$  is open , $X$ is a left-invariant  vector field belonging to the horizontal layer and $q\geq2$ . If  $B\left(x_{o}, 2 r\right) \subset \Omega $, then  for all  $0<\varepsilon<r$ and any $\lambda>0$
		\begin{equation}\label{approx}
			\|Xf_\varepsilon^*(dx_k)\|_{L^q(\Omega)}^q \lesssim \varepsilon^{s-\lambda-1}  [f]_{W_H^{s,q}(\Omega)}^q  
		\end{equation}
		\begin{equation}\label{approx-varepsilon}
			\|\frac{\partial}{\partial \varepsilon}f_\varepsilon^*(dx_k)\|_{L^q(\Omega)}^q \lesssim \varepsilon^{s-\lambda-1}  [f]_{W_H^{s,q}(\Omega)}^q  
		\end{equation}
		
		where the convergence is in the sense of the norm on Euclidean space $\mathbb{R}^{\nu_2}$, $f_{\varepsilon}=f * \eta_{\varepsilon}=\int_{\mathbb{G}_1 } f(y) \eta_{\varepsilon}\left(y^{-1} x\right) d y$  and $\eta_{\varepsilon}$ is a smooth function on Carnot group $G_1$ with a compact supported set $B(e,\varepsilon)$ and
		\begin{equation*}
			\int_{G_1}\eta_{\varepsilon}=1.
		\end{equation*}
		\begin{proof}
			Recall that  $\varphi(p, q)$  was defined in \eqref{metric-K}. Let  $p \in B\left(x_{o}, r\right)$  and let  $\varepsilon \in(0, r)$ . In what follows we will identify  $\left(f_{\varepsilon}^{*} \alpha\right)(p)$  with the vector (of equal length):
			
			\begin{equation*}
				\begin{aligned}
					&X(f_{\varepsilon}^{*} dx_k)(p)\\
					=&X( f_{\varepsilon}^{A_k})(p)\\
					=&
					\varepsilon^{-\nu_1-1} \int_{B_{\varepsilon}}\left(f^{A_k}(pz^{-1})-f_{\varepsilon}^{A_k}(p)\right)\cdot  Y(\eta\left(\delta_{\varepsilon^{-1}}(z)\right)) d z \\
					\leq&\varepsilon^{-\nu_1-1} \int_{B_{\varepsilon}} \int_{B_{\varepsilon}} \left(f^{A_k}(pz^{-1})-f_{\varepsilon}^{A_k}(p)\right) \eta_{\varepsilon}(w) Y(\eta\left(\delta_{\varepsilon^{-1}}(z)\right)) d w d z\\
					\leq&\varepsilon^{-\nu_1-1} \int_{B_{\varepsilon}} \int_{B_{\varepsilon}} d_K(f(pz^{-1}), f(pw^{-1})) \eta_{\varepsilon}(w) Y(\eta\left(\delta_{\varepsilon^{-1}}(z)\right)) d w d z
				\end{aligned}
			\end{equation*}
			Here the second equality uses the fact that
			$\int_{B_{\varepsilon}} f_{\varepsilon}^{A_k}(p)\, Y\big(\eta(\delta_{\varepsilon^{-1}}(z))\big) dz=0$,
			and the last one is obtained by writing the value $f_\varepsilon^{A_k}(p)$ itself
			as the average $\int_{B_\varepsilon}f^{A_k}(pw^{-1})\eta_\varepsilon(w)\,dw$.
			Note that \eqref{metric-K} yields
			\begin{equation*}
				|\pi f(pz^{-1})-\pi f(pw^{-1}))| \leq d_{K}(f(pz^{-1}), f(pw^{-1}))
			\end{equation*}
			Therefore,
			\begin{equation*}
				\begin{aligned}
					&X(f_{\varepsilon}^{*} dx_k)(p)\\
					\leq&\varepsilon^{-\nu_1-1} \int_{B_{\varepsilon}} \int_{B_{\varepsilon}} d_K(f(pz^{-1}), f(pw^{-1})) \eta_{\varepsilon}(w) Y(\eta\left(\delta_{\varepsilon^{-1}}(z)\right)) d w d z\\
					\lesssim&\varepsilon^{-\nu_1-1} \int_{B_{\varepsilon}} \int_{B_{\varepsilon}} d_{cc}(f(pz^{-1}), f(pw^{-1})) \eta_{\varepsilon}(w) Y(\eta\left(\delta_{\varepsilon^{-1}}(z)\right)) d w d z\\
					\lesssim&\varepsilon^{-2\nu_1-1} \int_{B_{\varepsilon}(p)} \int_{B_{\varepsilon}(p)} d_{cc}(f(x), f(y)) dx dy \\
					\leq &\varepsilon^{-2\nu_1-1} \cdot 2^{\nu_1+s}\cdot  \varepsilon^{\nu_1+s} \int_{B_{\varepsilon}(p)}  \int_{B_{\varepsilon}(p)} \frac{d_{cc}(f(x), f(y))}{d_{cc}(x, y)^{\nu_1+s}}  d x d y\\
					\leq &2^{\nu_1+s} \varepsilon^{-\nu_1+s-1}  \int_{B_{\varepsilon}(p)}\int_{\Omega} \frac{d_{cc}(f(x), f(y))}{d_{cc}(x, y)^{\nu_1+s}}  d x d y
				\end{aligned}
			\end{equation*}
			
			With the same way as theorem \ref{thm-approx},
			
			\begin{equation*}
				\|X(f_{\varepsilon}^{*} dx_k)\|_{L^{q}(\Omega)}^{q} \lesssim \varepsilon^{s-\lambda-1} \cdot [f]_{W_H^{s,q}(\Omega)}^q 
			\end{equation*}
			For the second part,
			\begin{equation*}
				\begin{aligned}
					&|\frac{\partial}{\partial \varepsilon}(f_{\varepsilon}^{*} dx_k)(p)|\\
					=&|\frac{\partial}{\partial \varepsilon}( f_{\varepsilon}^{A_k})(p)|\\
					\leq&
					\varepsilon^{-\nu_1-1} \int_{B_{\varepsilon}}\left|f^{A_k}(pz^{-1})-f_{\varepsilon}^{A_k}(p)\right|\cdot  \left|\frac{\partial}{\partial \varepsilon}(\eta\left(\delta_{\varepsilon^{-1}}(z)\right))\right| d z \\
					\leq&\varepsilon^{-\nu_1-1} \int_{B_{\varepsilon}} \int_{B_{\varepsilon}} \left|f^{A_k}(pz^{-1})-f_{\varepsilon}^{A_k}(p)\right| \eta_{\varepsilon}(w) \left|\frac{\partial}{\partial \varepsilon}(\eta\left(\delta_{\varepsilon^{-1}}(z)\right))\right| d w d z\\
					\leq&\varepsilon^{-\nu_1-1} \int_{B_{\varepsilon}} \int_{B_{\varepsilon}} d_K(f(pz^{-1}), f(pw^{-1})) \eta_{\varepsilon}(w) \left|\frac{\partial}{\partial \varepsilon}(\eta\left(\delta_{\varepsilon^{-1}}(z)\right)) \right|d w d z\\
					\lesssim&\varepsilon^{-\nu_1-1} \int_{B_{\varepsilon}} \int_{B_{\varepsilon}} d_{cc}(f(pz^{-1}), f(pw^{-1}))  d w d z\\
					\lesssim&\varepsilon^{-2\nu_1-1} \int_{B_{\varepsilon}(p)} \int_{B_{\varepsilon}(p)} d_{cc}(f(x), f(y)) dx dy \\
					\leq &\varepsilon^{-2\nu_1-1} \cdot 2^{\nu_1+s}\cdot  \varepsilon^{\nu_1+s} \int_{B_{\varepsilon}(p)}  \int_{B_{\varepsilon}(p)} \frac{d_{cc}(f(x), f(y))}{d_{cc}(x, y)^{\nu_1+s}}  d x d y\\
					\leq &2^{\nu_1+s} \varepsilon^{-\nu_1+s-1}  \int_{B_{\varepsilon}(p)}\int_{\Omega} \frac{d_{cc}(f(x), f(y))}{d_{cc}(x, y)^{\nu_1+s}}  d x d y
				\end{aligned}
			\end{equation*}
			With the same way as theorem \ref{thm-approx},
			\begin{equation*}
				\|\frac{\partial}{\partial \varepsilon}(f_{\varepsilon}^{*} dx_k)\|_{L^{q}(\Omega)}^{q} \lesssim \varepsilon^{s-\lambda-1} \cdot [f]_{W_H^{s,q}(\Omega)}^q 
			\end{equation*}
		\end{proof}	
	\end{thm}	
	
	Two mechanisms are at work. First, the weak contact equation is upgraded to an
	identity for the vertical components: distributionally, $df^{B_k}$ is given by
	the Cartan-type expression~\eqref{B coordinate} in terms of the horizontal
	components alone, so that the vertical part of the map is not free. Secondly,
	this identity implies the quadratic control
	$|f^{B_k}(x)-f^{B_k}(y)|\lesssim|\pi f(x)-\pi f(y)|^{2}$, which, after passing
	to Haj{\l}asz gradients, says that $f^{B_k}$ is almost twice as regular as
	$\pi f$. The proof below makes both steps quantitative; the loss
	$\lambda>0$ appears once more, when the vertical contribution is integrated
	against the singular kernel.
	
	\begin{thm}\label{thm:contact-converse}\label{contact to Wsp}
		Suppose that $\pi f\in W^{s,q}(\Omega;\mathbb{R}^{n_2})$ for $\frac{1}{2}<s$, $2\leq q$ and $f$ satisfies weak contact equations, where $\Omega\subset G_1$ is a bounded domain and $\pi (G_2)\cong \mathbb{R}^{n_2}$, then $f\in W_H^{s-\lambda,q}(\Omega;G_2)$ for any small $\lambda>0$.
	\end{thm}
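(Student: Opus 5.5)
Since $d_K$ is comparable to $d_{cc}$, it suffices to bound
\[
\int_\Omega\int_\Omega \frac{d_K(f(x),f(y))^q}{d_{cc}(x,y)^{\nu_1+(s-\lambda)q}}\,dx\,dy .
\]
For a step-two target, $d_K(f(x),f(y))\approx |\pi f(x)-\pi f(y)| + \sum_k |\varphi_k(f(x),f(y))|^{1/2}$. The horizontal term is handled at once: it is dominated by $[\pi f]_{W^{s,q}}^q$, and since $\Omega$ is bounded, $d_{cc}(x,y)^{\lambda q}\lesssim 1$ and the exponent can be lowered from $s$ to $s-\lambda$. The whole difficulty is therefore the vertical discrepancy. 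I would show that $|\varphi_k(f(x),f(y))|^{1/2}$ is controlled, up to the loss $\lambda$, by Haj{\l}asz-type data of $\pi f$.

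First I would make the weak contact equation concrete. Take the mollifications $f_\varepsilon$ from Theorem~\ref{thm:contact-direct} and the assumption $X(f^*_\varepsilon\omega_1^k)\to0$ in $L^{q/2}_{loc}$. For smooth $f_\varepsilon$, integrate the contact form along a horizontal curve $\gamma$ from $y$ to $x$, for instance a chain of horizontal segments of length $\lesssim d_{cc}(x,y)$:
\[
\varphi_k(f_\varepsilon(x),f_\varepsilon(y)) = \int_\gamma f_\varepsilon^*\omega_1^k + \tfrac12\sum_{i<j}\alpha_k^{ij}\int_\gamma \bigl(f^{A_i}_\varepsilon - f^{A_i}_\varepsilon(y)\bigr)\,d\bigl(f^{A_j}_\varepsilon\bigr)-(i\leftrightarrow j).
\]
The first integral is the contact defect and vanishes in the limit. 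The second is a L\'evy-area type term. It is bounded by $\sup_\gamma|\pi f_\varepsilon-\pi f_\varepsilon(y)|\cdot\int_\gamma|X\pi f_\varepsilon|$, so it is quadratic in the oscillation of $\pi f$. This is the distributional form of the Cartan identity for $df^{B_k}$ announced before the theorem.

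Next I would make the quadratic bound quantitative without derivatives, using fractional Haj{\l}asz gradients. By \cite{GORKA2022112867}, $\pi f\in W^{s,q}$ gives, for any $\lambda>0$, a function $g\in L^q$ with
\[
|\pi f(x)-\pi f(y)|\le d_{cc}(x,y)^{s-\lambda}\,\bigl(g(x)+g(y)\bigr)\quad\text{a.e.}
\]
I would estimate the area term by dyadic telescoping along $\gamma$. Split $\gamma$ into $2^m$ pieces of length $2^{-m}d$, where $d=d_{cc}(x,y)$, and replace $f$ by suitable averages $f_{B}$ on the corresponding balls. The increment of the area at level $m$ is a sum of $2^m$ products, each of size $\lesssim (2^{-m}d)^{2(s-\lambda)}\,\mathcal M(g)^2$. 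Summing over $m$ converges exactly because $2(s-\lambda)>1$, which is where $s>\frac12$ is used. The result is
\[
|\varphi_k(f(x),f(y))|\lesssim d^{2(s-\lambda)}\bigl(\mathcal M g(x)+\mathcal M g(y)\bigr)^2 + (\text{contact-defect term}).
\]
I would pass $\varepsilon\to0$ by Fatou, using Theorem~\ref{thm:contact-direct} for the convergence of $\pi f_\varepsilon$ and the vanishing of the defect along a subsequence.

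Taking square roots gives $|\varphi_k|^{1/2}\lesssim d^{s-\lambda}(\mathcal Mg(x)+\mathcal Mg(y))$. Then $f$ has a Haj{\l}asz gradient $\mathcal Mg\in L^q$ of order $s-\lambda$ relative to $d_K$. The reverse embedding $M^{s-\lambda,q}\subset W^{s-2\lambda,q}$ from \cite{GORKA2022112867} on the bounded domain then yields $f\in W_H^{s-2\lambda,q}$. Since $\lambda$ is arbitrary, this proves the theorem.

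The main obstacle is the telescoping step: justifying the pointwise area bound for a.e.\ pair $(x,y)$ when $f$ is not continuous. The averages must be taken over a chain of balls rather than along an actual curve, and one must check that the contact defect in $L^{q/2}$ really contributes only a null error after averaging. If this proves too delicate, my fallback is to carry the bound for $f_\varepsilon$ with constants uniform in $\varepsilon$, and then conclude by lower semicontinuity of the Gagliardo seminorm under a.e.\ convergence.
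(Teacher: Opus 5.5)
There is a genuine gap in the key step. Your overall plan is sound and differs from the paper's trace and Riemann-sum argument. You estimate $\varphi_k(f(x),f(y))$ directly, write it for $f_\varepsilon$ as a contact-defect integral plus a L\'evy area along a horizontal curve, and let $s>\frac12$ enter through summability of a dyadic sewing series. The problem is that the central pointwise bound does not follow from a \emph{single} curve.

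\textbf{The area term.} At level $m$ the products $(a_{B_{2\ell+1}}-a_{B_{2\ell}})(b_{B_{2\ell+2}}-b_{B_{2\ell+1}})$ live on balls of radius $r=2^{-m}d$. These balls are centred at points $c^m_\ell\in\gamma$ at distance $\sim d$ from $x$ and $y$. The Haj{\l}asz inequality bounds each product by $r^{2(s-\lambda)}$ times averages of $g$ over these small balls. Such an average is bounded by $\mathcal M g(x)$ only up to a factor $2^{m\nu_1}$, and this destroys the gain $2^{m(1-2(s-\lambda))}$. What one curve actually gives is
\[
|\mathrm{Area}_\gamma|\lesssim d^{2(s-\lambda)}\sum_{m\ge0}2^{m(1-2(s-\lambda))}\,2^{-m}\sum_{\ell}(\mathcal M g)^2(c^m_\ell).
\]
These are Riemann sums of $(\mathcal M g)^2$ along $\gamma$, that is, averages over a null set. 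No maximal inequality controls them by data at $x$ and $y$.

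\textbf{The defect term.} It has the same problem in a sharper form. The convergence $X(f_\varepsilon^*\omega_1^k)\to0$ in $L^{q/2}_{loc}$ says nothing about $\int_\gamma f_\varepsilon^*\omega_1^k$ on a fixed curve. Moreover this term is only $O(d)$. Its contribution $d^{q/2}$ to $d_K^q$, integrated against the kernel $d^{-\nu_1-(s-\lambda)q}$, is not integrable near the diagonal when $s-\lambda>\frac12$. So it must vanish for a.e.\ pair in the limit, not merely stay bounded.

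\textbf{Your fallback.} For the same reason it cannot work as stated. The map $f_\varepsilon$ is not contact, so generically $d_K(f_\varepsilon(x),f_\varepsilon(y))\approx d(x,y)^{1/2}$ and $[f_\varepsilon]_{W^{s-\lambda,q}_H}=\infty$. There is no bound uniform in $\varepsilon$ to which lower semicontinuity could be applied.

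\textbf{The missing idea.} Average over a spread family (a pencil) $\Gamma_{xy}$ of horizontal curves from $y$ to $x$, as in the proof of the Poincar\'e inequality on Carnot groups. This is legitimate because, for smooth $f_\varepsilon$, the left-hand side $\varphi_k(f_\varepsilon(x),f_\varepsilon(y))$ does not depend on $\gamma$. Averaging does two things:
\begin{enumerate}
\item It turns the defect into $I_\varepsilon(x)+I_\varepsilon(y)$, where $I_\varepsilon(w)=\int_{B(w,R)}|D_\varepsilon(z)|\,d(w,z)^{1-\nu_1}\,dz$ and $D_\varepsilon$ is the vector of horizontal contact defects. The kernel is integrable, so $I_\varepsilon\to0$ in $L^{q/2}_{loc}$, hence a.e.\ along a subsequence.
\item It turns the level-$m$ Riemann sums into ball averages, giving an estimate of the form
\[
|\varphi_k(f(x),f(y))|\lesssim d(x,y)^{2(s-\lambda)}\bigl(\mathcal M H(x)+\mathcal M H(y)\bigr),\qquad H:=(\mathcal M g)^2\in L^{q/2}.
\]
\end{enumerate}
The resulting gradient is $(\mathcal M H)^{1/2}$ rather than $\mathcal M g$, and it lies in $L^q$ only for $q>2$. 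For $q=2$ you must drop the outer maximal function and integrate the ball-average bound against the kernel directly by Fubini, using the spare power of $d$ coming from $\lambda$.

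Do not expect help from the paper on this point. Its Riemann-sum step keeps $g(\xi_i)$ at interior points and then silently drops it, which is the same hole.
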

	\begin{proof}

		Let $\psi \in C^\infty([0,\infty))$ be a cut-off function satisfying $0 \le \psi \le 1$, $\psi(t) = 1$ for $0 \le t \le 1/2$ and $\psi(t) = 0$ for $t \ge 3/4$. We define the extension operator $\operatorname{ext}_R f$ by
		\begin{equation}
			\operatorname{ext}_R f(x,t) = \psi(t/R)(f * \eta_t)(x) \quad \text{for } x \in \Omega \text{ and } t > 0. 
		\end{equation}
		Note that $\operatorname{ext}_R f \in C^\infty(\Omega \times (0,\infty))$. 
		
		From theorem \ref{lem:X-dx},
		\begin{equation*}
			\|Xf_t^*(dx_k)\|_{L^q(\Omega)}^q \lesssim t^{s-\lambda-1}  [f]_{W_H^{s,q}(\Omega)}^q  
		\end{equation*}
		\begin{equation*}
			\|\frac{\partial}{\partial t}f_t^*(dx_k)\|_{L^q(\Omega)}^q \lesssim t^{s-\lambda-1}  [f]_{W_H^{s,q}(\Omega)}^q  
		\end{equation*}

		Thus, we can get
		\begin{equation*}
			\int_{0}^{1}\|\partial_t(f*\eta_t)\|_{L^q}^qdt
			\lesssim \int_{0}^{1}t^{s-\lambda-1}  [f]_{W_H^{s,q}(\Omega)}^qdt  =\frac{t^{s-\lambda}}{s-\lambda}[f]_{W_H^{s,q}(\Omega)}^q
		\end{equation*}
		and
		\begin{equation*}
			\int_{0}^{1}\|Xf_t^*(dx_k)\|_{L^q(\Omega)}^qdt \lesssim \int_{0}^{1}t^{s-\lambda-1}  [f]_{W_H^{s,q}(\Omega)}^qdt  =\frac{t^{s-\lambda}}{s-\lambda}[f]_{W_H^{s,q}(\Omega)}^q
		\end{equation*} 
		It's easy to check that
		\begin{equation}
			\|\operatorname{ext}_R f\|_{W^{1,q}(\Omega \times (0,\infty))} \lesssim[f]_{W_H^{s,q}(\Omega)}^q. 
		\end{equation}

		We now bring the contact structure into play. For a compact subset $U\subset \Omega$,
		\begin{equation*}
			\lim_{\varepsilon \to 0}\int_{U}f_{\varepsilon}^{*} \omega_{1}^k\wedge \phi =0
		\end{equation*} 
		for any $n_2-1$-form $\phi$ with $wt(\phi)=\nu_2$. Then
		\begin{equation*}
			\begin{aligned}
				\lim_{\varepsilon \to 0}\int_{U}f_{\varepsilon}^{*} (d B_{k})\wedge \phi
				&= \frac{1}{2}\sum_{i=1}^{r} \sum_{i<j} \alpha_{k}^{i j}\lim_{\varepsilon \to 0}\int_{U}f_{\varepsilon}^{*} \left( A_{i}d A_{j}- A_{j}d A_{i}\right)\wedge \phi\\
				&= \frac{1}{2}\sum_{i=1}^{r} \sum_{i<j} \alpha_{k}^{i j}\lim_{\varepsilon \to 0}\int_{U} \left( f_{\varepsilon}^{A_{i}}d f_{\varepsilon}^{A_{j}}- f_{\varepsilon}^{A_{j}}df_{\varepsilon}^{A_{i}} \right)\wedge \phi\\		
			\end{aligned}
		\end{equation*}
		The right-hand side of the previous display is only meaningful if the
		limit appearing there exists. We verify this next, by exhibiting the limit
		as a product of traces in the sense of interpolation; the point is that the
		product of two functions in $W^{1-\frac1q,q}$ is a Cauchy sequence, and
		hence convergent, as soon as the two factors converge.
		
		Assume functions $\lambda ,g\in W^{1-\frac{1}{q},q}(\Omega)$ and $\Lambda,G\in C^\infty(\Omega \times (0,1)) $ are their extension like above.
		\begin{equation*}
			\int_{\Omega} \lambda_\varepsilon dg_\varepsilon \wedge \phi
			= \int_{\Omega \times (0,1)} d\Lambda_\varepsilon \wedge dG_\varepsilon \wedge \phi
		\end{equation*}
		The limit exists as $\varepsilon \to 0$, since
		\begin{equation*}
			\begin{aligned}
				&|\int_{\Omega \times (0,1)} d\Lambda_{\varepsilon_1} \wedge dG_{\varepsilon_1} \wedge \phi - d\Lambda_{\varepsilon_2} \wedge dG_{\varepsilon_2} \wedge \phi|\\
				\leq &|\int_{\Omega \times (0,1)} d{\left(\Lambda_{\varepsilon_1}-\Lambda_{\varepsilon_2}\right)} \wedge dG_{\varepsilon_1} \wedge \phi + d\Lambda_{\varepsilon_2} \wedge d{\left(G_{\varepsilon_1}-G_{\varepsilon_2}\right)} \wedge \phi|\\
				\leq &\left(\int_{\Omega \times (0,1)} |d{\left(\Lambda_{\varepsilon_1}-\Lambda_{\varepsilon_2}\right)}|^q\right)^{\frac{1}{q}} \left(\int_{\Omega \times (0,1)} |dG_{\varepsilon_1}|^p\right)^{\frac{1}{p}}+
				\left(\int_{\Omega \times (0,1)} |d{\left(G_{\varepsilon_1}-G_{\varepsilon_2}\right)}|^p\right)^{\frac{1}{p}} \left(\int_{\Omega \times (0,1)} |d\Lambda_{\varepsilon_2}|^q\right)^{\frac{1}{q}}
				\\
				\lesssim & [\lambda_{\varepsilon_1}-\lambda_{\varepsilon_2}]_{1-\frac{1}{q},q} [g_{\varepsilon_1}]_{1-\frac{1}{p},p}
				+[\lambda_{\varepsilon_2}]_{1-\frac{1}{q},q} [g_{\varepsilon_1}-g_{\varepsilon_2}]_{1-\frac{1}{p},p},
			\end{aligned}
		\end{equation*}
		where $\frac{1}{p}+\frac{1}{q}=1$ and $2\leq q \leq p$.
		By replacing the $\lambda$ and $g$ with $f^{A_{j}}$ and $f^{A_{i}}$, we know that the limit of $\lim_{\varepsilon \to 0}\int_{U} \left( f^{A_{i}}d f^{A_{j}}- f^{A_{j}}df^{A_{i}} \right)\wedge \phi$ exists, as long as we take $p=q=2$ which is feasible since $W_{loc}^{\frac{1}{2},2}\subset W^{s,q}$. We have therefore proved that
		the vertical component $f^{B_k}$ admits a distributional differential
		which is expressed purely in terms of the horizontal components; this is
		the fractional counterpart of the classical fact that a contact map is
		determined by its horizontal part. Thus, we can define 
		\begin{equation}\label{B coordinate}
			\int_{U}df^{B_{k}} \wedge \phi:=\frac{1}{2}\sum_{i=1}^{r} \sum_{i<j} \alpha_{k}^{i j}\lim_{\varepsilon \to 0}\int_{U} \left( f_{\varepsilon}^{A_{i}}d f_{\varepsilon}^{A_{j}}- f_{\varepsilon}^{A_{j}}df_{\varepsilon}^{A_{i}} \right)\wedge \phi
		\end{equation}
		and we can compute the motion of a curve in the $B_k$ direction by just knowing the motion of the curve in the horizontal directions and $|f^{B_k}(x)-f^{B_k}(y)|\approx |\pi f(x)-\pi f(y)|^2$. 
		
		It remains to convert the distributional identity \eqref{B coordinate}
		into a pointwise estimate, and for this it is convenient to pass to
		Haj{\l}asz gradients. Due to
		\cite[Theorem~3.3]{GORKA2022112867}, a Carnot group equipped with its Haar
		measure is an Ahlfors $\nu_1$-regular space, and the fractional Sobolev
		space $W^{s,q}(G_1)$ embeds continuously into the fractional
		Haj{\l}asz space $\mathcal{M}_{\nu_1}^{s,q}(G_1)$. Consequently every
		horizontal component $f^A$ admits a Haj{\l}asz upper $s$-gradient.
		
		Therefore, there is a Hajlasz upper gradient $g\in L^q$ such that 
		\begin{equation*}
			|f^{A}(x)-f^A(y)|\leq d_{cc}(x,y)^s \left(g(x)+g(y)\right)
		\end{equation*}
		for almost every points $x,y\in \Omega$. 
		
		Throughout let $0<s<1$ and $1\le p<\infty$, and let $u\in W^{s,p}(\R^n)$, i.e.
		\[
		[u]_{W^{s,p}}^{\,p}
		:=\dint \frac{|u(x)-u(y)|^{p}}{d(x,y)^{\nu_1+sp}}\,dx\,dy<\infty ,
		\qquad u\in L^{p}(\R^n).
		\]
		
		and set
		\[
		u_\varepsilon:=u*\rho_\varepsilon .
		\]
		
		For a measurable $v$ define its \emph{Gagliardo $p$-density}
		\begin{equation}\label{e:density}
			h_v(x):=\left(\int_{\R^n}\frac{|v(x)-v(y)|^{p}}{d(x,y)^{\nu_1+sp}}\,dy\right)^{\!1/p}\in[0,\infty] .
		\end{equation}
		
		Put
		\[
		g_\varepsilon:=C_0\,h_{u_\varepsilon},\qquad g:=C_0\,h_u .
		\]

		\textbf{Claim  1:}
		Let $v\in W^{s,p}(G_1)$ and let $h_v$ be given by \eqref{e:density}. Then
		$h_v\in L^{p}(G_1)$ with
		\[
		\norm{h_v}_{L^p}^{p}=[v]_{W^{s,p}}^{\,p},
		\]
		and 
		\[
		|v(x)-v(y)|\le C_0\,d(x,y)^{s}\bigl(h_v(x)+h_v(y)\bigr)\qquad\text{a.e. }x,y .
		\]

		\begin{proof}[proof of Claim 1]
			The identity $\norm{h_v}_p^p=[v]_{W^{s,p}}^p$ is Fubini's theorem. In
			particular $h_v<\infty$ a.e.
			
			Fix $x,y$ with $x\ne y$ and $h_v(x)+h_v(y)<\infty$; let $r:=d(x,y)>0$.
			Since $d(x,z)\le r$ on $B(x,r)$,
			\begin{align*}
				\int_{B(x,r)}|v(x)-v(z)|^{p}\,dz
				&\le r^{\nu_1+sp}\int_{B(x,r)}\frac{|v(x)-v(z)|^{p}}{d(x,z)^{\nu_1+sp}}\,dz\\
				&\le r^{\nu_1+sp}\,h_v(x)^{p}.
			\end{align*}
			Hence, with $M_x:=\frac{1}{\omega r^{\nu_1}}\int_{B(x,r)}|v(x)-v(z)|^{p}dz$,
			\[
			M_x\le \omega^{-1}r^{sp}h_v(x)^{p}.
			\]
			Likewise $B(x,r)\subset B(y,2r)$, so $d(y,z)\le 2r$ for $z\in B(x,r)$ and
			\begin{align*}
				\int_{B(x,r)}|v(y)-v(z)|^{p}\,dz
				&\le (2r)^{\nu_1+sp}\int_{B(x,r)}\frac{|v(y)-v(z)|^{p}}{|y-z|^{\nu_1+sp}}\,dz\\
				&\le (2r)^{\nu_1+sp}\,h_v(y)^{p},
			\end{align*}
			so that, with $M_y:=\frac{1}{\omega r^{\nu_1}}\int_{B(x,r)}|v(y)-v(z)|^{p}dz$,
			\[
			M_y\le \omega^{-1}2^{\nu_1+sp}r^{sp}h_v(y)^{p}.
			\]
			
			Then
			\begin{align*}
				|v(x)-v(y)|&=\frac{1}{\omega r^{\nu_1}}\int_{B(x,r)}|v(x)-v(y)|dz\\
				&\le\frac{1}{\omega r^{\nu_1}}\int_{B(x,r)} |v(x)-v(z)|dz+\frac{1}{\omega r^{\nu_1}}\int_{B(x,r)}|v(z)-v(y)|dz\\
				&\le\frac{\mathcal{H}^{\nu_1}(B(x,r))^{1-\frac{1}{p}}}{\omega r^{\nu_1}}\left(\int_{B(x,r)} |v(x)-v(z)|^pdz\right)^\frac{1}{p}\\
				&+\frac{\mathcal{H}^{\nu_1}(B(x,r))^{1-\frac{1}{p}}}{\omega r^{\nu_1}}\left(\int_{B(x,r)}|v(z)-v(y)|^pdz\right)^\frac{1}{p}\\
				&\le \frac{\mathcal{H}^{\nu_1}(B(x,r))^{1-\frac{1}{p}}}{\omega r^{\nu_1}}\left((M_x)^{1/p}+(M_y)^{1/p}\right)\\
				&\le C_0\,r^{s}\bigl(h_v(x)+h_v(y)\bigr).
			\end{align*}
			Since $r=d(x,y)$ and the excluded set is null, the claim follows.
		\end{proof}

		Set
		\[
		F_\varepsilon(x,y):=\frac{|u_\varepsilon(x)-u_\varepsilon(y)|^{p}}{d(x,y)^{\nu_1+sp}},
		\qquad
		F(x,y):=\frac{|u(x)-u(y)|^{p}}{d(x,y)^{\nu_1+sp}} .
		\]
		Note that $\dint F_\varepsilon=[u_\varepsilon]_{W^{s,p}}^p$ and
		$\dint F=[u]_{W^{s,p}}^p$.
		
		\textbf{Claim 2:}
		$\norm{h_{u_\varepsilon}-h_u}_{L^{p}(\R^n)}\longrightarrow0$ as
		$\varepsilon\to0$.
		\begin{proof}[proof of Claim 2]
			Because
			$u_\varepsilon\to u$ a.e., $F_\varepsilon\to F$ a.e. in $\R^{2n}$.

			\[
			\dint F_\varepsilon=[u_\varepsilon]_{W^{s,p}}^{p}\longrightarrow
			[u]_{W^{s,p}}^{p}=\dint F .
			\]
			
			Let $\varepsilon_k\downarrow0$. The functions
			$F_{\varepsilon_k},F\ge0$ satisfy $F_{\varepsilon_k}\to F$ a.e. and
			$\iint F_{\varepsilon_k}\to\iint F<\infty$; by Scheff\'e's lemma,
			\[
			\dint\abs{F_{\varepsilon_k}-F}\longrightarrow0 .
			\]
			Since the sequence was arbitrary,
			\begin{equation}\label{e:L1}
				\dint\abs{F_\varepsilon-F}\longrightarrow0\qquad(\varepsilon\to0).
			\end{equation}
			(Equivalently: $\abs{F_\varepsilon-F}=F_\varepsilon+F-2\min\{F_\varepsilon,F\}$,
			and $0\le\min\{F_\varepsilon,F\}\le F\in L^1(\R^{2n})$ gives
			$\iint\min\{F_\varepsilon,F\}\to\iint F$ by dominated convergence.)
			
			For $a,b\ge0$ and $p\ge1$ one has
			$|a-b|\ge|a^{1/p}-b^{1/p}|^{p}$ (equivalently,
			$|a^{p}-b^{p}|\ge|a-b|^{p}$). Hence, for a.e.\ $x$,
			\begin{align*}
				\abs{h_{u_\varepsilon}(x)-h_u(x)}^{p}
				&=\abs{\Bigl(\int F_\varepsilon(x,y)\,dy\Bigr)^{\!1/p}
					-\Bigl(\int F(x,y)\,dy\Bigr)^{\!1/p}}^{\,p}\\
				&\le\abs{\int_{\R^n}\bigl(F_\varepsilon(x,y)-F(x,y)\bigr)\,dy}\\
				&\le\int_{\R^n}\abs{F_\varepsilon(x,y)-F(x,y)}\,dy .
			\end{align*}
			Integrating in $x$ and using \eqref{e:L1},
			\[
			\norm{h_{u_\varepsilon}-h_u}_{L^p}^{p}
			\le\dint\abs{F_\varepsilon-F}\longrightarrow0 .
			\qedhere
			\]
		\end{proof}

		\textbf{Claim  3:}
		For every $\varepsilon>0$, $g_\varepsilon$ is a \emph{continuous}
		fractional Haj\l asz upper gradient of $u_\varepsilon$.
		\begin{proof}[proof of Claim  3]
			Since $u_\varepsilon$ is smooth, 
			\begin{equation*}
				|u_\varepsilon(x)-u_\varepsilon(y)|=|\nabla u_\varepsilon(x)|d(x,y)+o(d(x,y))
			\end{equation*}
			then
			\begin{equation*}
				\frac{|u_\varepsilon(x)-u_\varepsilon(y)|^{p}}{d(x,y)^{\nu_1+sp}}=|=\frac{|\nabla u_\varepsilon(x)|^p}{d(x,y)^{\nu_1+sp-p}}+o\left(\frac{1}{d(x,y)^{\nu_1+sp-p}}\right).
			\end{equation*}
			There is a dominant function $\max\{\frac{\|\nabla u_\varepsilon\|_{L^\infty(B(x,1))}^p}{d(x,y)^{\nu_1+sp-p}},|\nabla u_\varepsilon(x)|\}\in L^1$ and by Lebesgue dominated convergence theorem
			\begin{equation*}
				\lim_{z\to x}g_\varepsilon(z)=C_0\int\lim_{z\to x} \frac{|u_\varepsilon(z)-u_\varepsilon(y)|^{p}}{d(z,y)^{\nu_1+sp}}dy=C_0\int \frac{|u_\varepsilon(x)-u_\varepsilon(y)|^{p}}{d(x,y)^{\nu_1+sp}}dy,
			\end{equation*}
			which means $g_\varepsilon(x)$ is continuous.
		\end{proof}
		
		From (\ref{B coordinate}), distributionally $df^{B_{k}}=\frac{1}{2}\sum_{i=1}^{r} \sum_{i<j} \alpha_{k}^{i j} f^{A_{i}}d f^{A_{j}}- f^{A_{j}}df^{A_{i}}$, 
		then
		the discrete version(e.g., via Riemann sum approximation) gives, for any $x,y$:
		\begin{equation*}
			f_\varepsilon^{B_{k}}(x)-f_\varepsilon^{B_{k}}(y) = \frac{1}{2}\sum_{i=1}^{r} \sum_{i<j} \alpha_{k}^{i j} \sum_{i} f_\varepsilon^{A_i}(\xi_i)(f_\varepsilon^{A_j}(t_{i+1})
			-f_\varepsilon^{A_j}(t_i))-f_\varepsilon^{A_j}(\xi_i)(f_\varepsilon^{A_i}(t_{i+1})-f_\varepsilon^{A_i}(t_i))+E(\varepsilon).
		\end{equation*}
		where $E(\varepsilon)\to 0 $ as $\varepsilon \to 0$.
		Decomposing $f_\varepsilon^{A_i}(\xi_i)$ and $f_\varepsilon^{A_j}(\xi_i)$ as $f_\varepsilon^{A_i}(x) + (f_\varepsilon^{A_i}(\xi_i)-f_\varepsilon^{A_i}(x))$ and $f_\varepsilon^{A_j}(x) + (f_\varepsilon^{A_j}(\xi_i)-f_\varepsilon^{A_j}(x))$, and using the Hajlasz upper $s$-gradients of $f_\varepsilon^A$ to control the differences, we obtain:
		\begin{equation*}
			\begin{aligned}
				&|f_\varepsilon^{B_{k}}(x)-f_\varepsilon^{B_{k}}(y)|\\
				\lesssim & d(x,y)^s\,|f_\varepsilon^{A_i}(x)|\,[g_{f_\varepsilon^{A_j}}(x)+g_{f_\varepsilon^{A_j}}(y)] + C\,d(x,y)^{2s}\,[g_{f_\varepsilon^{A_i}}(x)+g_{f_\varepsilon^{A_i}}(\xi_i)]\,[g_{f_\varepsilon^{A_j}}(x)+g_{f_\varepsilon^{A_j}}(y)]\\
				+&
				d(x,y)^s\,|f_\varepsilon^{A_j}(x)|\,[g_{f_\varepsilon^{A_i}}(x)+g_{f_\varepsilon^{A_i}}(y)] + C\,d(x,y)^{2s}\,[g_{f_\varepsilon^{A_j}}(x)+g_{f_\varepsilon^{A_j}}(\xi_i)]\,[g_{f_\varepsilon^{A_i}}(x)+g_{f_\varepsilon^{A_i}}(y)]\\
				+& E(\varepsilon)
			\end{aligned}
		\end{equation*}
		For convenience, we can assume $f^A(x)=0$, then
		\begin{equation*}
			\begin{aligned}
				&|f_\varepsilon^{B_{k}}(x)-f_\varepsilon^{B_{k}}(y)|\\
				\lesssim & C\,d(x,y)^{2s}\,[g_{f_\varepsilon^{A_i}}(x)+g_{f_\varepsilon^{A_i}}(\xi_i)]\,[g_{f_\varepsilon^{A_j}}(x)+g_{f_\varepsilon^{A_j}}(y)]\\
				+&
				C\,d(x,y)^{2s}\,[g_{f_\varepsilon^{A_j}}(x)+g_{f_\varepsilon^{A_j}}(\xi_i)]\,[g_{f_\varepsilon^{A_i}}(x)+g_{f_\varepsilon^{A_i}}(y)]\\
				+&E(\varepsilon)
			\end{aligned}
		\end{equation*}
		Let $\varepsilon\to 0$, with the help of Claim 1, 2 and 3 above, we know that $g_{f^B}=\left(g_{f^A}\right)^2\in L^\frac{q}{2}$ is a Hajlasz upper $2s$-gradients of $f^{B}$ such that
		\begin{equation*}
			|f^{B_{k}}(x)-f^{B_{k}}(y)|
			\lesssim  d(x,y)^{2s}\left(g_{f^B}(x)+g_{f^B}(y)\right).
		\end{equation*}  
		We can now assemble the two estimates. Recall that
		$d_K(f(x),f(y))^q\simeq|\pi f(x)-\pi f(y)|^q+\sum_k|\varphi_k(f(x),f(y))|^{\frac q2}$,
		so the group-valued seminorm splits into a horizontal contribution, which
		is controlled by the hypothesis $\pi f\in W^{s,q}$, and a vertical
		contribution, which has just been estimated in terms of the Haj{\l}asz
		gradient of $\pi f$.
		\begin{equation}\label{eq:dK-split}
			\begin{aligned}
				[f]_{W_H^{s-\lambda,q}}^q&=\int_{\Omega}\int_{\Omega}\frac{d_{cc}(f(x), f(y))^{q}}{d_{cc}(x, y)^{\nu_1+q(s-\lambda)}}  d x dy\\
				&\lesssim\int_{\Omega}\int_{\Omega}\frac{d_{K}(f(x), f(y))^{q}}{d_{cc}(x, y)^{\nu_1+q(s-\lambda)}}  d x dy\\
				&\leq C \int_{\Omega}\int_{\Omega}\frac{|\pi f(x)-\pi f(y)|^q}{d_{cc}(x, y)^{\nu_1+q(s-\lambda)}}  d x dy+C \sum_{k} \int_{\Omega}\int_{\Omega}\frac{|f^{B_k}(x)-f^{B_k}(y)|^{\frac{q}{2}}}{d_{cc}(x, y)^{\nu_1+q(s-\lambda)}}  d x dy.\\
			\end{aligned}
		\end{equation}
		For the second term,
		\begin{equation*}
			\begin{aligned}
				&\int_{\Omega}\int_{\Omega}\frac{|f^{B_k}(x)-f^{B_k}(y)|^{\frac{q}{2}}}{d_{cc}(x, y)^{\nu_1+q(s-\lambda)}}  d x dy\\
				\lesssim&\int_{\Omega}\int_{\Omega}\frac{d(x,y)^{qs}\left(g_{f^B}(x)+g_{f^B}(y)\right)^{\frac{q}{2}}}{d_{cc}(x, y)^{\nu_1+q(s-\lambda)}}  d x dy\\
				=&\int_{\Omega}\int_{\Omega}\frac{\left(g_{f^B}(x)+g_{f^B}(y)\right)^{\frac{q}{2}}}{d_{cc}(x, y)^{\nu_1-q\lambda}}  d x dy\\
				\lesssim &\int_{\Omega}\int_{\Omega}\frac{g_{f^B}(x)^{\frac{q}{2}}}{d_{cc}(x, y)^{\nu_1-q\lambda}} dy d x 
				+\int_{\Omega}\int_{\Omega}\frac{g_{f^B}(y)^{\frac{q}{2}}}{d_{cc}(x, y)^{\nu_1-q\lambda}}  d x dy\\
				\leq& C_{q,\lambda}\|g_{f^B}\|_{L^\frac{q}{2}}^\frac{q}{2}+C_{q,\lambda}\|g_{f^B}\|_{L^\frac{q}{2}}^\frac{q}{2}\\
				=&2C_{q,\lambda}\|g_{f^A}\|_{L^q}^q\\
				\lesssim& C_{q,\lambda}[\pi f]_{W^{s,q}}^q
			\end{aligned}
		\end{equation*}
		
		Adding the horizontal contribution and the vertical one estimated in
		\eqref{eq:dK-split}, we finally obtain
		\begin{equation}\label{eq:contact-converse-conclusion}
			\begin{aligned}
				[f]_{W_H^{s-\lambda,q}}^q&=\int_{\Omega}\int_{\Omega}\frac{d_{cc}(f(x), f(y))^{q}}{d_{cc}(x, y)^{\nu_1+q(s-\lambda)}}  d x dy\\
				&\leq C \int_{\Omega}\int_{\Omega}\frac{|\pi f(x)-\pi f(y)|^q}{d_{cc}(x, y)^{\nu_1+q(s-\lambda)}}  d x dy+C \sum_{k} \int_{\Omega}\int_{\Omega}\frac{| f^{B_k}(x)- f^{B_k}(y)|^{\frac{q}{2}}}{d_{cc}(x, y)^{\nu_1+q(s-\lambda)}}  d x dy\\
				&\lesssim [\pi f]_{W^{s-\lambda,q}}^q+[\pi f]_{W^{s,q}}^q .
			\end{aligned}
		\end{equation}
		This is precisely the assertion of the theorem: if
		$\pi f\in W^{s,q}(\Omega;\mathbb{R}^{n_2})$ with $\frac12<s$ and
		$2\le q$, and if $f$ satisfies the weak contact equations, then
		$f\in W_H^{s-\lambda,q}(\Omega;G_2)$ for every $\lambda>0$ sufficiently
		small.
		
	\end{proof}

	\begin{remark}
		The quadratic control
		$|f^{B_k}(x)-f^{B_k}(y)|\lesssim|\pi f(x)-\pi f(y)|^{2}$ established in the
		proof of Theorem~\ref{thm:contact-converse} is worth isolating, because holds not only for smooth contact mappings or Sobolev contact mappings, but also fraction Sobolev mappings.
	\end{remark}

	\section{Interpolations of Fractional Sobolev mappings on Carnot groups}
	\label{sec:interp}
	This section answers question (b) of the introduction: the fractional seminorms
	of a mapping between Carnot groups form an interpolation scale whose endpoint
	$s\to1$ recovers the horizontal first-order energy.
	
	The strategy is to reduce everything to the scalar case by means of the
	horizontal projection $\pi$, and to let Theorem~\ref{thm:contact-direct} supply
	the contact part that $\pi$ cannot see. Accordingly, this section contains no
	new geometric input; its content is the bookkeeping needed to make the reduction
	work, and to keep track of the exponents. We proceed in three steps.
	Theorem~\ref{thm:interp-limit} is the Bourgain--Br\'ezis--Mironescu type
	implication, obtained by applying the scalar endpoint formula of Maalaoui and
	Pinamonti to $\pi f$. 
	
	We begin by fixing the notation for the homogeneous first-order space.
	\begin{definition}\label{def:WH1q}
		Thus $\dot W^{1,q}_H(\Omega;G_2)$ is defined by the same Gagliardo-type
		double integral as $W^{s,q}_H(\Omega;G_2)$, with the parameter $s$ set equal
		to $1$; whenever a horizontal differential exists, the two ways of measuring
		first-order regularity are compared in
		Theorem~\ref{thm:interp-converse}.
		
		A mapping $f:\Omega \to G_2$ belongs to $\dot{W}_H^{1,q}(\Omega;G_2)$, when  
		\begin{equation*}
			\left(\int_{\Omega}\int_{\Omega}\frac{ d_{cc}(f(x), f(y))^{q}}{d_{cc}(x, y)^{\nu_1+ q}}  d x dy\right)^\frac{1}{q} < \infty
		\end{equation*}
	\end{definition}
	We first prove that the endpoint $s\to1$ of the scale recovers the
	horizontal first-order energy. The only input from Section~\ref{sec:contact}
	is Theorem~\ref{thm:contact-direct}, which guarantees that the map is
	asymptotically contact along the whole scale; everything else is the scalar
	interpolation theorem applied to $\pi f$.
	
	\begin{thm}\label{thm:interp-limit}
		Suppose that $f\in \bigcap_{s \in (0,1)}\dot{W}_H^{s,q}(\Omega;G_2)$ for a bounded domain $\Omega\subset G_1$ and $\lim_{s\to 1}(1-s)^\frac{2}{q} \|f\|_{W_H^{s,\frac{q}{2}}(\Omega;G_2)}^{\frac{q}{2}}< \infty$, then 
		\begin{equation*}
			\|D_Hf\|_{L^{\frac{q}{2}}}^{\frac{q}{2}}\lesssim  \lim_{s\to 1}(1-s)^\frac{2}{q} \|f\|_{W_H^{s,\frac{q}{2}}(\Omega;G_2)}^{\frac{q}{2}} 
		\end{equation*}
		and consequently $f\in W_H^{1,\frac q2}(\Omega;G_2)$, with
		$D_Hf:=D_H(\pi f)$.
	\end{thm}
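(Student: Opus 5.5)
The plan is to reduce the statement to the scalar endpoint formula of Maalaoui and Pinamonti \cite[Theorem~5.1]{MAALAOUI201991}, applied to the horizontal projection $\pi f$, with exponent $p:=\frac q2$ in place of $q$. First we pass from the group-valued seminorm to the Euclidean one. As in the last step of the proof of Theorem~\ref{thm:contact-direct}, the projection $\pi:(G_2,d_K)\to\mathbb R^{n_2}$ is $1$-Lipschitz and $d_K\approx d_{cc}$. Hence, for every $s\in(0,1)$,
\[
[\pi f]_{W^{s,\frac q2}(\Omega)}^{\frac q2}\lesssim [f]_{W_H^{s,\frac q2}(\Omega;G_2)}^{\frac q2},
\]
with a constant independent of $s$. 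Multiplying by the normalising factor $(1-s)^{\frac 2q}$ and letting $s\to1$ shows that the hypothesis transfers to $\pi f$: the limit $\lim_{s\to1}(1-s)^{\frac2q}[\pi f]^{\frac q2}_{W^{s,\frac q2}}$ is finite. The hypothesis $f\in\bigcap_s\dot W^{s,q}_H$ together with boundedness of $\Omega$ gives $\pi f\in L^{\frac q2}(\Omega)$, at least after subtracting a constant. Such a normalisation is harmless, since left translations of $G_2$ act on $\pi f$ by translation.

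Second, I apply the Bourgain--Br\'ezis--Mironescu type characterisation on Carnot groups, in the form derived from the interpolation identity $(L^{p},\dot W^{1,p})_{s,p}=W^{s,p}$ of \cite{MAALAOUI201991}. It says that if $u\in L^p(\Omega)$ and $\liminf_{s\to1}(1-s)[u]^p_{W^{s,p}}<\infty$, then $u\in W^{1,p}_H$ and $\|D_Hu\|^p_{L^p}\lesssim\liminf_{s\to1}(1-s)[u]^p_{W^{s,p}}$. Here the power of $(1-s)$ is to be matched with the paper's normalisation $(1-s)^{\frac2q}$, and I would track that exponent carefully. The constant depends on $\nu_1$, $p$, and the horizontal frame through the averaged integral $\int_{S}|\langle\nabla_H u,\xi\rangle|^p\,d\sigma(\xi)$. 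Applied to each component of $u=\pi f$, this yields $\pi f\in W^{1,\frac q2}_H(\Omega;\mathbb R^{n_2})$ and the claimed inequality for $D_H(\pi f)$. By the definition $D_Hf:=D_H(\pi f)$, that is the displayed estimate.

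Third, to conclude that $f\in W^{1,\frac q2}_H(\Omega;G_2)$ and not merely that $\pi f$ is Sobolev, I would invoke the contact structure. Since $f\in\dot W^{s,q}_H$ for $s$ close to $1$, in particular for $s>\frac12$, Theorem~\ref{thm:contact-direct} gives $X(f_\varepsilon^*\omega)\to0$ in $L^{\frac q2}_{loc}$, so $f$ satisfies the weak contact equations. Then, as in the proof of Theorem~\ref{thm:contact-converse}, the vertical components are determined distributionally by \eqref{B coordinate}. Their horizontal derivatives vanish in the limit, so the horizontal differential of $f$ is exactly $D_H(\pi f)$ and no additional vertical horizontal-energy appears.

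The main obstacle is the first step on a bounded domain rather than on all of $G_1$. The scalar endpoint theorem is stated globally, so I would either use an extension or localise to compactly contained $W\Subset\Omega$, obtain the bound on $W$, and exhaust $\Omega$. A secondary obstacle is the interplay between the exponents $q$ and $\frac q2$ and the power of $(1-s)$: the hypothesis is phrased for $W^{s,\frac q2}$ while membership is assumed for $W^{s,q}$. I would use the $W^{s,q}$ membership only to feed Theorem~\ref{thm:contact-direct}, and the $\frac q2$-quantity only for the energy estimate.
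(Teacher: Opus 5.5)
Your proposal is correct and follows essentially the same route as the paper. You pass from $d_{cc}$ to $\pi f$ via $d_K\approx d_{cc}$ and the $1$-Lipschitz projection, apply the Maalaoui--Pinamonti endpoint theorem to $\pi f$ with exponent $\frac q2$, and take the contact part from Theorem~\ref{thm:contact-direct}; the one real difference is that the paper applies the scalar formula to the mollifications $\pi f_\varepsilon$ and then removes $\varepsilon$, whereas you apply the characterisation to $\pi f$ directly. The normalisation you flag is harmless: for $q\ge 2$ one has $(1-s)\le(1-s)^{\frac 2q}$, so the standard $(1-s)$-normalised bound is already dominated by the quantity in the statement.
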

	\begin{proof}
		The proof consists of two independent steps. First, the contact part of
		the statement: by Theorem~\ref{thm:contact-direct}, every map in the
		intersection satisfies the weak contact equations, so the vertical
		components are not free and it suffices to estimate the horizontal
		projection. Secondly, the analytic part: the scalar endpoint formula is
		applied to $\pi f$, and the group-valued seminorm on the left is
		replaced by the Euclidean seminorm of $\pi f$ using the fact that $\pi$
		is $1$-Lipschitz with respect to $d_K$. Note that the two steps use
		the hypothesis in different ways: the first one only needs
		$s>\frac12$, the second one uses the full strength of
		$\lim_{s\to1}$.
		A mapping $g$ is a contact Sobolev mapping if and only if its horizontal components $g^{A_i}$ are horizontal Sobolev functions and weak contact equation holds on $g$. Thus, the proof here can be divided into two parts: $f^{A_i}$ are Sobolev functions and weak contact equation hold on $f$.
		
		From theorem \ref*{thm-approx}, we know that weak contact equations holds on $f\in \dot{W}_H^{s,q}(\Omega;G_2)$ for $s>\frac{1}{2}$, so we just need to prove the relationship between these two norms. By classical result of Bourgain-Brezis-Mironescu and of Maz’ya-Shaposhnikova for fractional Sobolev function in the cases of Carnot groups, we can draw the same conclusion for fractional Sobolev mapping between Carnot groups. 
		\begin{equation*}
			\begin{aligned}
				\|D_H(\pi f)\|_{L^{\frac{q}{2}}}^{\frac{q}{2}}
				&=\lim_{\varepsilon \to 0}\|D_H(\pi f_\varepsilon)\|_{L^{\frac{q}{2}}}^{\frac{q}{2}}\\
				&\lesssim \lim_{\varepsilon \to 0}\lim_{s\to 1}(1-s)^\frac{2}{q}  \|\pi f_\varepsilon\|_{W^{s,q}(\Omega;G_2)}^{\frac{q}{2}} \\
				&\lesssim \lim_{\varepsilon \to 0}\lim_{s\to 1}(1-s)^\frac{2}{q}  \|\pi f\|_{W^{s,q}(\Omega;G_2)}^{\frac{q}{2}} \\
				&= \lim_{s\to 1}(1-s)^\frac{2}{q}  \|\pi f\|_{W^{s,q}(\Omega;G_2)}^{\frac{q}{2}} \\
				&=\lim_{s\to 1}(1-s)^\frac{2}{q} \int_{\Omega}\int_{\Omega}\frac{|\pi f(x)- \pi f(y)|^{\frac{q}{2}}}{d_{cc}(x, y)^{\nu_1+ \frac{sq}{2}}}  d x dy \\
				&\leq \lim_{s\to 1}(1-s)^\frac{2}{q} \int_{\Omega}\int_{\Omega}\frac{ d_{K}(f(x), f(y))^{\frac{q}{2}}}{d_{cc}(x, y)^{\nu_1+ \frac{sq}{2}}}  d x dy \\
				&\lesssim \lim_{s\to 1}(1-s)^\frac{2}{q} \int_{\Omega}\int_{\Omega}\frac{ d_{cc}(f(x), f(y))^{\frac{q}{2}}}{d_{cc}(x, y)^{\nu_1+ \frac{sq}{2}}}  d x dy \\
				&=\lim_{s\to 1}(1-s)^\frac{2}{q} \|f\|_{W_H^{s,\frac{q}{2}}(\Omega;G_2)}^{\frac{q}{2}}, \\
			\end{aligned}
		\end{equation*}
		where the first inequality above is from the interpolation result for fractional Sobolev functions on Carnot group(ref \cite{MAALAOUI201991},Theorem 5.1). 
		Thus $D_H(\pi f)\in L^{\frac q2}(\Omega)$ together with the corresponding
		bound, and since by definition $D_Hf:=D_H(\pi f)$ we conclude that
		$f\in W_H^{1,\frac q2}(\Omega;G_2)$, as claimed.
		
	\end{proof}
	
	\begin{remark}\label{rem:exponents}
		Two remarks on the exponents are in order, since the statement involves
		both $q$ and $\frac q2$. The hypothesis is expressed in terms of
		$\dot W^{s,q}_H$, whereas the conclusion involves
		$\|D_Hf\|_{L^{q/2}}$; this is not a typographical slip. The loss from $q$
		to $\frac q2$ comes from the vertical components, which by
		Theorem~\ref{thm:contact-converse} are controlled only at half the
		integrability of the horizontal ones, since they obey a quadratic
		relation. Improving the conclusion from $W^{1,q/2}_H$ to $W^{1,q}_H$ would
		require a linear control of the vertical components by the horizontal
		ones, which the contact structure does not provide. Note also that the
		hypothesis is not vacuous: by Theorem~\ref{thm:interp-converse} below it
		is satisfied by every $f\in W^{1,q}_H(\Omega;G_2)$ on a bounded domain.
	\end{remark}
	We next prove the converse inclusion, which is much softer: it is a direct
	consequence of the Poincar\'e inequality, and the only point that requires
	attention is the dependence of the constant on $s$ as $s\to1$.
	
	\begin{thm}\label{thm:interp-converse}
		Suppose that $f\in W_H^{1,q}(\Omega;G_2)$, then $f\in \bigcap_{s \in (0,1)}\dot{W}_H^{s,q}(\Omega;G_2)$ and 
		\begin{equation}\label{eq:interp-converse}
			\|f\|_{W_H^{s,q}(\Omega;G_2)}^{q}\lesssim K(q,s,\Omega)\,\|D_Hf\|_{L^{q}(\Omega)}^{q},
			\qquad
			K(q,s,\Omega)\approx
			\frac{\nu_1+sq}{q(1-s)} ,
		\end{equation}
		for every $s\in(0,1)$, where $C$ depends only on the group $G_1$ and on
		the constant in the Poincar\'e inequality.
	\end{thm}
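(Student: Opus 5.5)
The plan is to reduce the statement to a pointwise difference-quotient estimate along horizontal curves and then integrate against the Gagliardo kernel, splitting the double integral at a fixed scale. For $f\in W^{1,q}_H(\Omega;G_2)$ we use the upper-gradient description of first-order horizontal Sobolev maps. Here $|D_Hf|$, together with the contact structure, controls the speed of $f\circ\gamma$ along horizontal curves $\gamma$ in the metric $d_{cc}$ of $G_2$, since the image of a horizontal curve under a contact map is horizontal. This gives the metric Poincar\'e-type inequality on the Carnot group $G_1$:
\begin{equation*}
\int_{B(x,r)} d_{cc}(f(x),f(xh))^q\,dh\ \lesssim\ r^{q}\int_{B(x,Cr)} |D_Hf|^q ,
\end{equation*}
or, more conveniently, its translation form
\begin{equation*}
\int_{\Omega'} d_{cc}\big(f(x),f(xh)\big)^q\,dx\ \le\ C\,d_{cc}(e,h)^q\,\|D_Hf\|_{L^q(\Omega)}^q .
\end{equation*}
To prove the latter, join $e$ to $h$ by a geodesic $\gamma$ of length $d_{cc}(e,h)$ and write $d_{cc}(f(x),f(xh))\le\int_0^{1}|D_Hf|(x\gamma(t))\,|\dot\gamma|\,dt$. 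Then apply H\"older and Fubini, using right-invariance of Haar measure on the nilpotent group $G_1$. This step is first proved for smooth contact maps and then extended by density, for instance via the mollifiers $f_\varepsilon$ together with Theorem~\ref{thm:contact-direct}. The constant here depends only on $G_1$ and the Poincar\'e constant, as required.

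Next, substitute $y=xh$ in the Gagliardo double integral and split at $d_{cc}(e,h)=R:=\operatorname{diam}\Omega$. Since $\Omega$ is bounded, only $|h|\le R$ matters. Using the translation estimate,
\begin{equation*}
\int_\Omega\int_\Omega\frac{d_{cc}(f(x),f(y))^q}{d_{cc}(x,y)^{\nu_1+sq}}\,dx\,dy\ \lesssim\ \|D_Hf\|_{L^q}^q\int_{B(e,R)} d_{cc}(e,h)^{q-\nu_1-sq}\,dh .
\end{equation*}
Integrating in polar (homogeneous) coordinates, with $|B(e,\rho)|=c\rho^{\nu_1}$, the remaining integral is $c\nu_1\int_0^R\rho^{q(1-s)-1}d\rho=\frac{c\nu_1R^{q(1-s)}}{q(1-s)}$. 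This gives the $(1-s)^{-1}$ blow-up. If one keeps track of a normalized form of the kernel, or bounds the far region by $\|f\|$ terms, one obtains the stated factor $\frac{\nu_1+sq}{q(1-s)}$ up to constants depending on $\Omega$, since $\nu_1\le\nu_1+sq\le\nu_1+q$.

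The main obstacle is the first step: justifying the curve-wise bound $d_{cc}(f(x),f(x\gamma(t)))\le\int|D_Hf|$ for a general $f\in W^{1,q}_H$. The target distance is $d_{cc}$ of $G_2$, not the Euclidean norm of $\pi f$, so we must know that $f$ maps almost every horizontal curve to a horizontal curve whose length is controlled by $|D_H(\pi f)|$. I would first try to obtain this from the weak contact equations, which give $df^{B_k}$ in terms of $\pi f$ as in \eqref{B coordinate}, applied to the smooth approximants $f_\varepsilon$. One then passes to the limit using Fatou's lemma and lower semicontinuity of the double integral. If that limit passage fails, the fallback is to estimate $d_{cc}\approx d_K$ directly: the horizontal part is handled by the scalar Poincar\'e inequality, and the vertical part $|\varphi_k|^{1/2}$ is handled via the quadratic control from the proof of Theorem~\ref{thm:contact-converse} with $s=1$.
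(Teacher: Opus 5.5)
\textbf{The main gap: the boundary layer of $\Omega$.} The gap is in passing from your translation estimate to a bound on the full double integral over $\Omega\times\Omega$. The right-invariance/Fubini step only works for those $x$ whose whole curve $t\mapsto x\gamma(t)$ stays in $\Omega$, for example $x\in\Omega'_h:=\{x: B(x,d_{cc}(e,h))\subset\Omega\}$. After the substitution $y=xh$, however, $x$ ranges over all of $\Omega\cap\Omega h^{-1}$, and the pairs with $x$ in the layer $\Omega\setminus\Omega'_h$ are simply not estimated. Choosing curves more cleverly does not help, since CC geodesics between points of $\Omega$ typically leave $\Omega$ (even CC balls in the Heisenberg group are not geodesically convex).

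In fact no argument can work without a hypothesis on $\Omega$. Take $G_1=\R^2$, $\Omega$ the unit disc minus the segment $[0,1)\times\{0\}$, and $u=r\theta$ in polar coordinates with $\theta\in(0,2\pi)$. Then $u$ has bounded gradient on $\Omega$. The map $f=(u,0,\dots,0)$ is trivially contact, lies in $W^{1,q}_H$ for every $q$, and satisfies $d_{cc}(f(x),f(y))=|u(x)-u(y)|$. Since $u$ jumps across the slit, $[f]_{W^{s,q}_H(\Omega;G_2)}=\infty$ whenever $sq\ge1$.

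The paper takes a different route exactly at this point. It covers $\Omega$ by balls of radius $\tau$ and applies an assumed $G_2$-valued Poincar\'e inequality on $B_i\cap\Omega$, local and global; this hypothesis encodes the admissible geometry of $\Omega$ and excludes the slit disc. That yields $\Psi(\tau)\lesssim\tau^{\nu_1+q}\|D_Hf\|_{L^q}^q$, and the paper then integrates the kernel by the layer-cake formula. Your polar-coordinate computation is fine and gives the same $(1-s)^{-1}$ behaviour. To close the argument, however, you must do one of two things. Either assume $\Omega=G_1$, or a contact $W^{1,q}_H$ extension of $f$ (which is not obtained by extending the components separately). Or replace the curve-wise estimate by the ball-wise Poincar\'e inequality, which is essentially the paper's Step~1.

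\textbf{The density step.} Separately, the density argument for the curve-wise bound does not work as sketched. Appealing to Theorem~\ref{thm:contact-direct} is circular: it presupposes $f\in W^{s,q}_H(\Omega;G_2)$ for some $s>\frac12$, which is what you are proving. Moreover $f_\varepsilon$ is not contact, so $d_{cc}(f_\varepsilon(x),f_\varepsilon(xh))\le\int_0^1|D_Hf_\varepsilon|(x\gamma(t))\,|\dot\gamma(t)|\,dt$ fails for the approximants. What does hold, by bounding the area enclosed by $\pi f_\varepsilon\circ(x\gamma)$ and the chord, is $d_K(f_\varepsilon(x),f_\varepsilon(xh))\lesssim\int_0^1|D_H\pi f_\varepsilon|(x\gamma(t))\,|\dot\gamma(t)|\,dt+\sum_k\bigl|\int_{x\gamma}f_\varepsilon^{*}\omega_k\bigr|^{1/2}$. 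The last term must then be shown to vanish in $L^q(dx)$. This follows from the first-order weak contact equation, since the contact defect of $f_\varepsilon$ tends to $0$ in $L^{q/2}_{\mathrm{loc}}$ by convergence of products; Fatou then gives the estimate for $f$.

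Your fallback via $|f^{B_k}(x)-f^{B_k}(y)|\lesssim|\pi f(x)-\pi f(y)|^2$ is not usable. As a pointwise inequality it fails already for smooth contact maps: a horizontal lift of a circle returns to the same horizontal point with a different vertical coordinate. Only versions with the square of a length or Haj{\l}asz-gradient quantity are true.
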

	\begin{proof}

		\textbf{Step 1. Scale estimate via Poincaré inequality.}
		For $0<\tau\le D$, define
		\[
		\Psi(\tau)
		:=
		\iint_{\{d(x,y)<\tau\}}
		d(f(x),f(y))^q\,d\mu(x)d\mu(y).
		\]
		
		Let $\{B_i=B(z_i,\tau)\}_{i}$ be a covering of $\Omega$ by CC-balls of radius $\tau$,
		with uniformly bounded overlap $N=N(G_1)$.
		If $x\in B_i$ and $d(x,y)<\tau$, then $y\in 2B_i$.
		By the triangle inequality,
		\[
		d(f(x),f(y))^q
		\le
		2^{q-1}\bigl(d(f(x),p_i)^q+d(f(y),p_i)^q\bigr),
		\]
		where $p_i\in G_2$ is the reference point given by the metric-valued
		Poincaré inequality:
		\[
		\int_{B_i\cap\Omega}d(f,p_i)^q\,d\mu
		\;\le\;
		C_P\,\tau^q
		\int_{B(x_i,\lambda\tau)\cap\Omega}|D_H f|^q\,d\mu.
		\]
		
		Summing over $i$ and using the bounded overlap property yields
		\[
		\Psi(\tau)
		\;\le\;
		C_1\,\tau^{\nu_1+q}\,\|D_H f\|_{L^q(\Omega)}^q,
		\qquad
		C_1=C_1(G_1,C_P,q).
		\]
		For $\tau\ge D$, the global Poincaré inequality gives
		\[
		\Psi(\tau)\le \Psi(D)
		\;\le\;
		C_1\,D^{\nu_1+q}\,\|D_H f\|_{L^q(\Omega)}^q.
		\]
		
		\medskip
		
		\noindent
		\textbf{Step 2. Layer decomposition.}
		Using the identity
		\[
		\frac{1}{t^{\nu_1+sq}}
		=
		\int_0^\infty
		\mathbf{1}_{\{t<\tau\}}\,
		(\nu_1+sq)\,\tau^{-\nu_1-sq-1}\,d\tau,
		\]
		and Tonelli's theorem, we obtain
		\[
		[f]_{s,q,\Omega}^q
		=
		\int_0^\infty
		(\nu_1+sq)\,\tau^{-\nu_1-sq-1}\,\Psi(\tau)\,d\tau
		=
		\Bigl(\int_0^D+\int_D^\infty\Bigr)
		(\nu_1+sq)\,\tau^{-\nu_1-sq-1}\,\Psi(\tau)\,d\tau.
		\]
		
		Insert the estimates from Step 1:
		\[
		[f]_{s,q,\Omega}^q
		\;\le\;
		C_1\|D_H f\|_q^q
		\Bigl[
		(\nu_1+sq)\int_0^D \tau^{\,q-\nu_1-sq+\nu_1-1}\,d\tau
		+
		D^{\nu_1+q}(\nu_1+sq)\int_D^\infty \tau^{-\nu_1-sq-1}\,d\tau
		\Bigr].
		\]
		
		Since $\tau^{\,q-\nu_1-sq+\nu_1-1}=\tau^{\,q(1-s)-1}$, we compute
		\[
		\int_0^D \tau^{\,q(1-s)-1}\,d\tau
		=
		\frac{D^{\,q(1-s)}}{q(1-s)},
		\qquad
		\int_D^\infty \tau^{-\nu_1-sq-1}\,d\tau
		=
		\frac{D^{-sq}}{\nu_1+sq}.
		\]
		
		Therefore,
		\[
		[f]_{s,q,\Omega}^q
		\;\le\;
		C_1\,D^{\,q(1-s)}
		\Bigl(
		\frac{\nu_1+sq}{q(1-s)}+1
		\Bigr)
		\|D_H f\|_{L^q(\Omega)}^q.
		\]
		
		This establishes the desired inequality with constant
		\[
		K(q,s,\Omega)\;\simeq\;
		C(G_1,q)\,D^{\,q(1-s)}\,
		\frac{\nu_1+sq}{q(1-s)},
		\]
		which blows up as $s\to1^-$, confirming that the estimate is sharp.

	\end{proof}
	
	\begin{remark}\label{rem:blow-up-constant}
		The constant $K(q,s,\Omega)$ in \eqref{eq:interp-converse} blows up like
		$(1-s)^{-1}$ as $s\to1$. This is not a defect of the argument: it is the
		companion of the factor $(1-s)$ appearing in
		Theorem~\ref{thm:interp-limit}, and it reflects the fact that a
		$W^{1,q}$-function has a fractional seminorm of order $(1-s)^{-1/q}$ as
		$s\to1$. In particular, Theorem~\ref{thm:interp-converse} cannot be used
		to pass to the limit $s\to1$; that passage is precisely the content of
		Theorem~\ref{thm:interp-limit}, and the two results are consistent with
		one another. The same factor will reappear, in a different guise, in
		Remark~\ref{rem:normalisation}.
	\end{remark}
	
	The last statement of this section is a stability property, and it is
	included for a single reason: in Section~\ref{sec:energy} we shall
	differentiate the energy along a line $t\mapsto f+tg$, and we must know that
	this line stays inside the space in which the energy is finite. Note that a
	Carnot group carries no linear structure, so the expression $f+tg$ has to be
	interpreted with care: we identify $G_2$ with $\mathbb R^{\nu_2}$ by means of
	the coordinates of the first kind fixed in Section~\ref{sec:contact} --- the
	same identification that was already used in
	Theorem~\ref{thm:contact-direct} --- and we form the combination
	componentwise. This identification is fixed once and for all.
	
	\begin{thm}\label{thm:linear-span}\label{f+sg}
		Suppose that $f,g\in W_H^{s,q}(\Omega;G_2)$ for $s>\frac{1}{2}$ and $q\geq 2$, then $f+tg\in W_H^{s-\lambda,q}(\Omega;G_2)$ for any $t\in \mathbb{R}$ and any small $\lambda>0$. 
	\end{thm}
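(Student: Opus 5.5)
The plan is to reduce the statement to Theorem~\ref{thm:contact-converse}. That result only asks for two things: that the horizontal projection lie in $W^{s,q}$, and that the map satisfy the weak contact equations. Since $f+tg$ is formed componentwise in the coordinates of the first kind, we have $\pi(f+tg)=\pi f+t\,\pi g$. The vertical component is $f^{B_k}+tg^{B_k}$.

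\emph{Step 1: the horizontal projection.} By the last part of the proof of Theorem~\ref{thm:contact-direct}, $[\pi f]_{W^{s,q}}\lesssim[f]_{W^{s,q}_H}$, and the same holds for $g$. Since $W^{s,q}(\Omega;\mathbb R^{n_2})$ is a vector space, $\pi(f+tg)\in W^{s,q}$ with
\[
[\pi(f+tg)]_{W^{s,q}}\lesssim [f]_{W^{s,q}_H}+|t|\,[g]_{W^{s,q}_H}.
\]

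\emph{Step 2: the vertical components.} Here I would not try to verify a contact equation for $f+tg$. The equation is quadratic, and the cross terms $f^{A_i}dg^{A_j}$ spoil it, so $f+tg$ is in general not contact. Instead I estimate the vertical part directly through the splitting \eqref{eq:dK-split}:
\[
d_K\big((f+tg)(x),(f+tg)(y)\big)^q\simeq |\pi(f+tg)(x)-\pi(f+tg)(y)|^q+\sum_k|\varphi_k|^{q/2}.
\]
Here $\varphi_k$ is the second-layer coordinate of $(f+tg)(y)^{-1}(f+tg)(x)$. Expanding the group law gives
\[
\varphi_k=\big(f^{B_k}(x)-f^{B_k}(y)\big)+t\big(g^{B_k}(x)-g^{B_k}(y)\big)-\tfrac12\sum_{i<j}\alpha_k^{ij}\big(F_iH_j-H_iF_j\big),
\]
where $F,H$ are the horizontal values of $f+tg$ at $y$ and $x$. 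I then rewrite the Lie-bracket term as $\varphi_k(f(x),f(y))+t\,\varphi_k(g(x),g(y))$ plus mixed terms. The mixed terms are of the form $t\,\alpha_k^{ij}\big(f^{A_i}(y)(g^{A_j}(x)-g^{A_j}(y))-\cdots\big)$ together with symmetric analogues. Each of them is bounded by
\[
|t|\Big(|\pi f(y)|\,|\pi g(x)-\pi g(y)|+|\pi g(y)|\,|\pi f(x)-\pi f(y)|\Big).
\]

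\emph{Step 3: the main obstacle.} The mixed terms are only linear in the horizontal increments, yet they enter with exponent $q/2$ against the kernel $d_{cc}(x,y)^{-\nu_1-q(s-\lambda)}$. A term like $|\pi g(x)-\pi g(y)|^{q/2}/d^{\nu_1+q(s-\lambda)}$ is integrable only if $\pi g\in W^{2(s-\lambda),q/2}$, which is more than we are given. I would first try the Haj\l asz route from Claim~1 of Theorem~\ref{thm:contact-converse}. It gives $|\pi g(x)-\pi g(y)|\le d^{s}(h(x)+h(y))$ with $h\in L^q$, and the factor $|\pi f(y)|^{q/2}$ is bounded on the bounded domain $\Omega$, at least after truncation or a local $L^\infty$ bound. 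This leaves a kernel $d^{-\nu_1-qs/2+q\lambda}$, which is singular for $s>0$. So a naive estimate fails, and one must use the stronger information on $\varphi_k(f(x),f(y))$ and $\varphi_k(g(x),g(y))$.

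The remedy I would pursue is to absorb the linear mixed terms into the horizontal part, using $d_K\approx d_{cc}$ and the triangle inequality of $d_{cc}$. The two pure terms are controlled by $[f]_{W^{s,q}_H}$ and $|t|^{\cdot}[g]_{W^{s,q}_H}$. The genuinely mixed term is handled by working near the diagonal $d(x,y)<\tau$, splitting off the far region where the kernel is bounded, and left-translating so that the base point is at $0$. After translation $f^{A}(y)=0$, as in the normalisation ``assume $f^A(x)=0$'' used in the proof of Theorem~\ref{thm:contact-converse}, and the mixed term becomes quadratic in increments. One then takes Haj\l asz gradients $h_f,h_g\in L^q$, bounds the mixed term by $d^{2s}(h_f+h_g)^2$, and integrates against the kernel exactly as in the second-term computation there. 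This yields a bound $C_{q,\lambda}(1+|t|)^q\big([f]^q_{W^{s,q}_H}+[g]^q_{W^{s,q}_H}\big)$. I expect making this recentering rigorous to be the delicate point, and it is where the loss $\lambda>0$ and the hypothesis $s>\frac12$ (so that $2s>1$ and $s-\lambda>\frac12$) are used.
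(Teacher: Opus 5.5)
\textbf{Your rejection of the paper's route is right, but Step~3 has a gap that cannot be closed, because the statement is false.}

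You are right not to go through Theorem~\ref{thm:contact-converse}. The paper does exactly that, claiming that the contact defect $X((f+tg)^*\omega)$ is linear in the map. That claim fails for the reason you give. The gap is in Step~3. Write $[u,v]_k:=\sum_{i<j}\alpha_k^{ij}(u_iv_j-u_jv_i)$, $\Delta u:=u(x)-u(y)$ and $F=f+tg$. The exact identity is
\[
\varphi_k(F(x),F(y))=\varphi_k(f(x),f(y))+t\,\varphi_k(g(x),g(y))+\tfrac{t(1-t)}{2}[\pi g(y),\Delta\pi g]_k-\tfrac{t}{2}\big([\pi f(y),\Delta\pi g]_k+[\pi g(y),\Delta\pi f]_k\big).
\]
Your Step~2 misses the $\tfrac{t(1-t)}{2}$ term.

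The terms after $t\,\varphi_k(g(x),g(y))$ are linear in the increments and enter $d_K$ through a square root. So they cannot be absorbed into the horizontal part $|\Delta\pi F|$. Nor can they be removed by recentering, because the componentwise sum is not equivariant under left translations of $G_2$. One checks that $L_a(f+tg)=L_af+t\tilde g$, where $\pi\tilde g=\pi g$ and $\tilde g^{B_k}=g^{B_k}+\tfrac12[\pi a,\pi g]_k$. Thus $\tilde g$ is $g$ composed with a conjugation $p\mapsto bpb^{-1}$, which is not a $d_{cc}$-isometry. Its vertical increment is $\varphi_k(g(x),g(y))+\tfrac12[\pi a,\Delta\pi g]_k$. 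Choosing $\pi a=-\pi f(y)$ therefore only moves the mixed term into the $g$-part, and $a$ would in any case have to depend on $y$. The normalisation ``$f^A(x)=0$'' in Theorem~\ref{thm:contact-converse} concerns a single map and does not transfer to $f+tg$.

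More importantly, the obstruction you found at the start of Step~3 is genuine. Here is a counterexample.
\begin{itemize}
\item Take $G_1=G_2=\mathbb H^1$ and $\Omega$ bounded.
\item Take $g=\mathrm{id}$. It lies in $W^{s,q}_H(\Omega;\mathbb H^1)$ for every $s<1$, since the integrand is $d_{cc}(x,y)^{q(1-s)-\nu_1}$.
\item Take $f\equiv(c,0)$ with $c\in\mathbb R^2\setminus\{0\}$, and $t=1$.
\end{itemize}
Then $F(y)^{-1}F(x)=(y^{-1}x)\cdot\big(0,-\tfrac{1}{2}[c,\pi x-\pi y]\big)$. Hence
\[
d_{cc}(F(x),F(y))\gtrsim|[c,\pi x-\pi y]|^{1/2}-C\,d_{cc}(x,y).
\]
By left and dilation invariance, on a fixed positive fraction of each shell $\{r/2\le d_{cc}(x,y)\le r\}$ one has $|[c,\pi x-\pi y]|\ge\kappa|c|r$. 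So for small $r$ each dyadic shell contributes at least a constant times $|c|^{q/2}r^{q(\frac12-\sigma)}$ to $\int d_{cc}(F(x),F(y))^q\,d_{cc}(x,y)^{-\nu_1-q\sigma}\,dx$. The sum over $r=2^{-k}$ diverges for $\sigma\ge\frac12$.

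Thus for $s\in(\frac12,1)$ and every $0<\lambda\le s-\frac12$, which are exactly the small $\lambda$ of the statement, $f+g\notin W^{s-\lambda,q}_H(\Omega;\mathbb H^1)$. The choice $f\equiv0$, $t=2$ also gives a counterexample, through the $\tfrac{t(1-t)}{2}$ term. In both cases $[f]_{W^{s,q}_H}$ and $[g]_{W^{s,q}_H}$ are finite, so your hoped-for bound $C_{q,\lambda}(1+|t|)^q\big([f]^q_{W^{s,q}_H}+[g]^q_{W^{s,q}_H}\big)$ cannot hold.

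The hypothesis $s>\frac12$ works against you. It is precisely what puts the order $2(s-\lambda)$ demanded of the linear terms at or above $1$. A correct statement would need one of the following:
\begin{itemize}
\item a target exponent below $\frac12$;
\item extra hypotheses that kill the linear terms;
\item a group-compatible operation in place of the componentwise $f+tg$.
\end{itemize}
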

	\begin{proof}
		The argument is a combination of two facts that have already been
		established: the seminorm is subadditive, so the horizontal part causes no
		difficulty, while the contact part is inherited from $f$ and $g$ by
		Theorem~\ref{thm:contact-direct}; Theorem~\ref{thm:contact-converse} then
		returns the full group-valued seminorm, at the price of the usual loss
		$\lambda>0$.
		
		It is easy to check that $f+tg\in W^{s,q}_H(\Omega;G_2)$.
		\begin{equation*}
			\begin{aligned}
				[f+tg]_{W^{s,q}(\Omega;G_2)}^{q}&=\int_{\Omega}\int_{\Omega}\frac{|(f+tg)(x)- (f+tg)(y)|^{q}}{d_{cc}(x, y)^{\nu_1+ sq}}  d x dy\\
				&\lesssim \int_{\Omega}\int_{\Omega}\frac{|f(x)- f(y)|^{q}}{d_{cc}(x, y)^{\nu_1+ sq}}  d x dy+ t^{q}\int_{\Omega}\int_{\Omega}\frac{|g(x)- g(y)|^{q}}{d_{cc}(x, y)^{\nu_1+ sq}}  d x dy\\
				&=[f]_{W^{s,q}_H(\Omega;G_2)}^{q}+t^{q}[g]_{W^{s,q}_H(\Omega;G_2)}^{q}
			\end{aligned}
		\end{equation*}
		From Theorem~\ref{thm:contact-direct}, both $f$ and $g$ satisfy the weak
		contact equations; since the contact defect
		$X((f+tg)^*\omega)$ is linear in the map, $f+tg$ satisfies them as well. 
		Therefore by Theorem~\ref{thm:contact-converse}, $f+tg\in W_H^{s-\lambda,q}(\Omega;G_2)$
		for any $\lambda>0$ sufficiently small, which is the assertion.
	\end{proof}

	\section{Korevaar-Schoen Energy on Carnot groups}
	\label{sec:energy}
	This section answers question (c) of the introduction. Throughout, $E_{s,q}$
	denotes the functional introduced in Definition~\ref{def:KS-energy} below; our
	aim is to describe how it depends on the horizontal and on the vertical parts of
	the map, and to identify the equation satisfied by its minimizers.
	
	The section is organised as follows. Lemma~\ref{lem:taylor} is the stratified
	expansion of the vertical discrepancy $\varphi_k(f(p),f(\tilde p))$: it is the
	only genuinely new technical ingredient, and it shows that, at the scale at
	which the energy lives, this discrepancy is governed by the vertical
	derivatives $T_jf^{*}(\omega_k)$. Theorem~\ref{thm:energy} feeds the expansion
	into the definition of $E_{s,q}$ and produces the dichotomy announced in the
	introduction: for $0<s<1$ the vertical part disappears in the limit $r\to0$,
	whereas for $s=1$ a vertical defect survives and is bounded from below by
	$\sum_k\|Tf^{*}(\omega_k)\|_{L^{q/2}}^{q/2}$.
	Corollary~\ref{cor:vertical} translates this into a regularity statement for
	the second layer, and it is this corollary --- rather than
	Theorem~\ref{thm:energy} itself --- that will be used in
	Section~\ref{sec:jacobian}. The section closes with the variational
	problem: we show that minimizers of $E_{s,q}$ satisfy a fractional
	$q$-Laplace equation of Korevaar--Schoen type, see
	Definition~\ref{def:frac-q-laplace} and the discussion preceding it.
	\begin{definition}\label{def:KS-energy}
		For $f\in W_H^{s,q}(\Omega;G_2)$ we can define Korevaar-Schoen Energy $E_{s,q}(f)$ as
		\begin{equation*}
			E_{s,q}(f)=\limsup_{r\to 0}\frac{1}{r^{\nu_1+ sq}}\int_{\Omega} \int_{B(x,r)}d_{K}(f(x),f(y))^q dy dx
		\end{equation*}
	\end{definition}
	
	Two remarks on this definition are in order before we proceed. First, the
	$\limsup$ is finite on $W^{s,q}_H(\Omega;G_2)$: since $d_K$ is comparable to
	$d_{cc}$ and since $d_{cc}(x,y)\le r$ on the domain of integration, we have
	$E_{s,q}(f)\lesssim[f]_{W^{s,q}_H}^q$ for every $r$ sufficiently small.
	Secondly, and this is the reason for using $d_K$ rather than $d_{cc}$, the
	quasi-metric $d_K$ splits along the layers, so that the energy separates into
	a horizontal and a vertical part; the whole proof of
	Theorem~\ref{thm:energy} rests on this splitting.
	
	The minimisation of $E_{s,q}$ is discussed at the end of this section, and
	the reader interested only in the description of the energy may pass directly
	to Lemma~\ref{lem:taylor}. The only fact about the variational problem that
	is needed before then is the following necessary condition. By
	Theorem~\ref{thm:linear-span}, the class $W^{s,q}_H(\Omega;G_2)$ is stable
	under the perturbation $f\mapsto f+tg$, up to the arbitrarily small loss
	$\lambda>0$ already encountered in Section~\ref{sec:contact}; it is therefore
	legitimate to differentiate along such a line, and a minimizer $f$ of
	$[\,\cdot\,]_{W^{s-\lambda,q}_H}$ must satisfy
	\begin{equation}\label{eq:first-variation}
		\left. \frac{d}{dt} \right|_{t=0} [f+tg]_{W_H^{s-\lambda,q}}=0
		\qquad\text{for every }g\in W_H^{s,q}(\Omega;G_2).
	\end{equation}
	
	The following lemma is the technical core of this section. Its main idea ---
	expanding a smooth function along the stratification and reading the vertical
	derivatives off the part of the expansion that is homogeneous of degree two ---
	goes back to the stratified Taylor formula of Arena, Caruso and Causa
	\cite{Arena2010}, and to the contact expansion used by Capogna and Lin
	\cite{Capogna2001} in the first-order Legendrian setting. The novelty here is
	that the expansion is carried out for a mollified \emph{fractional} Sobolev map,
	and that the remainder is measured in $L^{\frac q2}$, which is exactly the
	integrability supplied by Theorem~\ref{thm:contact-direct}.
	\begin{lem}\label{lem:taylor}\label{estimate for phi}
		Let $f\in W_H^{s,q}(\Omega;G_2)$. Then, as $d_K(p,\tilde p)\to0$, the
		following expansion holds in $L^{\frac q2}(\Omega)$ with respect to the
		variable $p$:
		\begin{equation}\label{eq:taylor}
			\frac{\varphi_k(f(p),f(\tilde{p}))}{d_K(p,\tilde{p})^{2s}}=\left\langle\sum_{j} T_jf^*(\omega_k)(p) , \frac{\varphi(p,\tilde{p})}{\left(|\pi(p)-\pi(\tilde{p})|^2+\sum_{l} |\varphi_l(p,\tilde{p})|\right)^{s}} \right\rangle +o\left(d_K(p,\tilde{p})^{2-2s}\right)   
		\end{equation}
		Here $\varphi=(\varphi_1,\varphi_2,\dots)$ denotes the vector of the
		vertical discrepancies of the layers.
	\end{lem}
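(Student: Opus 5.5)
The plan is to prove \eqref{eq:taylor} first for the mollified maps $f_\varepsilon=f*\eta_\varepsilon$, where all Taylor expansions are legitimate, and then pass to the limit $\varepsilon\to0$ in $L^{\frac q2}$ using the estimates of Theorem~\ref{thm:contact-direct}. For a fixed $p$, write $\tilde p=p\,\delta_t\xi$ with $t=d_K(p,\tilde p)$ and $\xi$ on the $d_K$-unit sphere, so that $\pi(\tilde p)-\pi(p)=t\,\pi(\xi)$ and $\varphi(p,\tilde p)=t^2\varphi(e,\xi)$ up to sign. Apply the stratified Taylor formula of \cite{Arena2010} to each component of $f_\varepsilon$ along the left-invariant frame $X^i,T_j$ of $G_1$:
\[
f_\varepsilon(\tilde p)=f_\varepsilon(p)+t\sum_i\xi_iX^if_\varepsilon(p)+t^2\Big(\tfrac12\sum_{i,l}\xi_i\xi_lX^iX^lf_\varepsilon(p)+\sum_j\xi^{(2)}_jT_jf_\varepsilon(p)\Big)+O(t^3).
\]
Substitute this into the explicit formula for $\varphi_k(f_\varepsilon(\tilde p),f_\varepsilon(p))$ from Section~\ref{sec:contact}. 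The terms of order $t$ combine into $t\,\xi\cdot X(f_\varepsilon^*\omega_1^k)(p)$-type expressions, more precisely into $\sum_i\xi_i(f_\varepsilon^*\omega_1^k)(X^i)(p)$. At order $t^2$, the symmetric part $\xi_i\xi_l$ assembles into horizontal derivatives $X^l\big((f_\varepsilon^*\omega_1^k)(X^i)\big)$, and the antisymmetric part into the vertical evaluation $\sum_j\xi^{(2)}_j(f_\varepsilon^*\omega_1^k)(T_j)(p)$. This is the Capogna--Lin computation \cite[Theorem~2.13]{Capogna2001}, with the quadratic group-law correction exactly cancelling the cross terms $f^{A_i}X f^{A_j}$.

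Next I use Theorem~\ref{thm:contact-direct}. The coefficients $(f_\varepsilon^*\omega_1^k)(X^i)$ and their horizontal derivatives $X^l(\cdot)$ tend to $0$ in $L^{\frac q2}_{loc}$. I will bound the former by the same maximal-function estimate \eqref{eq:pointwise-defect}, using one fewer derivative on $\eta$. Hence all horizontal contributions up to order $t^2$ are $o(t^2)$ in $L^{\frac q2}$ as $\varepsilon\to0$. The surviving term is $t^2\langle\sum_jT_jf^*(\omega_k)(p),\varphi(e,\xi)\rangle=\langle\sum_jT_jf^*(\omega_k)(p),\varphi(p,\tilde p)\rangle$. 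Here $T_jf^*(\omega_k)$ is defined as the $L^{\frac q2}$ limit of $T_jf_\varepsilon^*(\omega_k)$, which I take to exist. Dividing by $d_K(p,\tilde p)^{2s}$, which by definition of $d_K$ is comparable to $(|\pi(p)-\pi(\tilde p)|^2+\sum_l|\varphi_l(p,\tilde p)|)^{s}$ (this is the square of the step-two homogeneous gauge), gives the main term of \eqref{eq:taylor}. The remainder $o(t^2)/t^{2s}=o(t^{2-2s})$ is as claimed.

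The main obstacle is the exchange of limits between $\varepsilon\to0$ and $t=d_K(p,\tilde p)\to0$. Concretely, the $O(t^3)$ Taylor remainder for $f_\varepsilon$ has constants that blow up like $\varepsilon^{-3}$ times oscillations of $f$. I intend to couple $\varepsilon=\varepsilon(t)$ with $t\ll\varepsilon(t)\to0$, choosing $\varepsilon(t)=t^{\theta}$ with $\theta<1$ close to $1$. I would then control $\varphi_k(f(p),f(\tilde p))-\varphi_k(f_\varepsilon(p),f_\varepsilon(\tilde p))$ in $L^{\frac q2}$ via the quadratic bound $|\varphi_k|\le d_K^2$ and the $W^{s,q}$ modulus of $f$ at scale $\varepsilon$. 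This gives errors of order $\varepsilon^{2s}$ times maximal functions, which must be $o(t^{2})$ after normalization. This is where $s>\tfrac12$ and the $\lambda$-loss device of Theorem~\ref{thm:contact-direct} will have to be used, and I expect the balance of exponents to be delicate.
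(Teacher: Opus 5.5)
Your plan is the paper's own argument: mollify, expand $\varphi_k(f_\varepsilon(p),f_\varepsilon(\cdot))$ by the stratified Taylor formula, discard the horizontal coefficients via Theorem~\ref{thm:contact-direct}, and keep the vertical coefficient as $T_jf^*(\omega_k)$. The paper lets the mollification parameter go to $0$ at a fixed scale and treats the Taylor remainder as uniform without justification. You rightly identify this interchange of limits as the crux. However, the coupling $\varepsilon=t^{\theta}$ cannot close it, because Theorem~\ref{thm:contact-direct} gives a rate in $\varepsilon$, not smallness relative to $t$.

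Its pointwise bound yields $\|(f_\varepsilon^*\omega_1^k)(X^i)\|_{L^{q/2}}\lesssim\varepsilon^{2(s-\lambda)-1}$. So the first-order term $t\sum_i\xi_i(f_\varepsilon^*\omega_1^k)(X^i)$ is $o(t^2)$ only if $\varepsilon^{2(s-\lambda)-1}=o(t)$, which for $s<1$ forces $\varepsilon\ll t$. Likewise, your own approximation error $\varepsilon^{2s}$ is $o(t^2)$ only if $\varepsilon\ll t^{1/s}$; for $\varepsilon=t^{\theta}$ with $\theta<1$ it equals $t^{2s\theta}\gg t^2$. The Taylor remainder, whose available bound is $t^3\varepsilon^{2s-3}$, needs at least $t\ll\varepsilon$. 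No choice of $\theta$ or $\lambda$ reconciles these when $s<1$.

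Three further steps are unsupported. First, Theorem~\ref{thm:contact-direct} says nothing about $X^l\bigl((f_\varepsilon^*\omega_1^k)(X^i)\bigr)$. Putting one more (not one fewer) derivative on $\eta$ only gives $\varepsilon^{2s-2}\mathcal{M}(g)$, so the only available bound on the symmetric $t^2$-term is $t^2\varepsilon^{2s-2}\gg t^2$. Second, the existence of an $L^{q/2}$-limit of $T_jf_\varepsilon^*(\omega_k)$ is an extra hypothesis: Theorem~\ref{lem:T-omega} only gives the blowing-up bound $\varepsilon^{2(s-\lambda)-2}$. Third, $d_K(p,\tilde p)^{2s}$ is merely comparable to $\bigl(|\pi(p)-\pi(\tilde p)|^2+\sum_l|\varphi_l(p,\tilde p)|\bigr)^{s}$, so exchanging them alters the main term by $O(t^{2-2s})$, not $o(t^{2-2s})$.

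These gaps cannot be repaired, because for $\frac12<s<1$ the expansion itself is false. If $\tilde p=p\exp(hX_1)$, then $\varphi(p,\tilde p)=0$ and $d_K(p,\tilde p)=h$. So \eqref{eq:taylor} asserts $\|\varphi_k(f(\cdot),f(\cdot\exp(hX_1)))\|_{L^{q/2}}=o(h^2)$, whatever $T_jf^*(\omega_k)$ is.

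Take $G_1=G_2=\mathbb{H}^1$ and fix $\sigma\in(s,1)$. Let $\gamma$ be the horizontal lift (by Young integration, since $\sigma>\frac12$) of the planar curve $x\mapsto\sum_{n\ge1}2^{-n\sigma}e^{i2^nx}$, and set $f(x,y,\tau)=\gamma(x)$. Then $d_{cc}(f(p),f(\tilde p))\lesssim d_{cc}(p,\tilde p)^{\sigma}$ on bounded sets, so $f\in W^{s,q}_H(\Omega;\mathbb{H}^1)$ for bounded $\Omega$. Moreover $f_\varepsilon$ depends only on $x$, so $T_jf_\varepsilon^*(\omega_k)\equiv0$.

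But $\varphi(f(p),f(p\exp(hX_1)))=\pm\mathcal{A}(x,x+h)$, the L\'evy area of $\gamma$ over $[x,x+h]$. A Fourier computation (the off-diagonal frequencies integrate to zero in $a$) gives $\int_0^{2\pi}\mathcal{A}(a,a+h)\,da=\pi\sum_{n\ge1}2^{-2n\sigma}\bigl(2^nh-\sin(2^nh)\bigr)\gtrsim h^{2\sigma}\gg h^2$.

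For $s<1$ one only has $|\varphi_k(f(p),f(\tilde p))|\lesssim d_{cc}(f(p),f(\tilde p))^2$, which is of size $t^{2s}$ on average, i.e.\ $O(1)$ after division by $d_K^{2s}$. A remainder $o(d_K^{2-2s})$ requires first-order (e.g.\ Pansu-differentiable) regularity, so in this range the lemma must be weakened rather than proved.
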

	\begin{proof}
		The argument is a two-parameter limiting procedure. We first mollify $f$
		at a scale $\delta$ and expand the resulting \emph{smooth} map $f_\delta$
		by means of the stratified Taylor formula; this produces a horizontal
		first-order term, a horizontal second-order term and a vertical term. We
		then let $\delta\to0$: the horizontal terms disappear by
		Theorem~\ref{thm:contact-direct}, because they are precisely the contact
		defect, while the vertical term converges to $T_jf^{*}(\omega_k)$. It is
		this last survivor that carries the vertical defect of
		Theorem~\ref{thm:energy}.
		
		Denote $p=(z,t)$ where the $z$ is horizontal variable.
		\begin{equation*}
			\begin{aligned}
				&\varphi_k(f_\delta(0),f_\delta(p))^{\frac{q}{2}}\\=&
				\left(\frac{f_\delta^{B_k}(0) - f_\delta^{B_k}(p)}{\epsilon^{2s}} -\frac{1}{2}\sum_{i=1}^{r} \sum_{i<j} \alpha_{k}^{i j}\left(f_\delta^{A_i}(0) \frac{f_\delta^{A_j}(0) - f_\delta^{A_j}(p)}{\epsilon^{2s}} + f_\delta^{A_j}(0) \frac{f_\delta^{A_i}(0) - f_\delta^{A_i}(p)}{\epsilon^{2s}}\right)\right)   ^{\frac{q}{2}} \\
			\end{aligned}
		\end{equation*}
		Because $u_\delta(p):=\varphi_k(f_\delta(0),f_\delta(p))$ is a smooth function on $G_1$, it has Taylor expansion as in \cite{Arena2010}.
		\begin{equation*}
			\begin{aligned}
				u_\delta(p)
				&=\epsilon^{-2s} \left(  \bigg[ \nabla_H f_\delta^{B_k}(0) -\frac{1}{2}\sum_{i=1}^{r} \sum_{i<j} \alpha_{k}^{i j} \left( f_\delta^{A_i}(0) \nabla_H f_\delta^{A_j}(0) - f_\delta^{A_j}(0) \nabla_H f_\delta^{A_i}(0) \right) \bigg] \cdot z \right. \\
				&+\left\langle \bigg[ \nabla_H^2 f_\delta^{B_k}(0) -\frac{1}{2}\sum_{i=1}^{r} \sum_{i<j} \alpha_{k}^{i j} \left( f_\delta^{A_i}(0) \nabla_H^2 f_\delta^{A_j}(0) - f_\delta^{A_j}(0) \nabla_H^2 f_\delta^{A_i}(0) \right) \bigg] \cdot z, z \right\rangle  \\
				&+ \left. \left\langle\sum_{j} \bigg[ T_j f_\delta^{B_k}(0) -\frac{1}{2}\sum_{i=1}^{r} \sum_{i<j} \alpha_{k}^{i j} \left( f_\delta^{A_i}(0) T_j f_\delta^{A_j}(0) - f_\delta^{A_j}(0) T_j f_\delta^{A_i}(0) \right) \bigg] , t \right\rangle + o\left(d(p,0)^{2}\right) \right),
			\end{aligned}
		\end{equation*}
		
		We now perform the second step and let $\delta\to0$. The crucial
		observation is that the bracket appearing in the first two lines of the
		expansion is nothing but the contact defect
		$\big(X_1f_\delta^{*}(\omega_k),\dots,X_{\dim V_1}f_\delta^{*}(\omega_k)\big)$,
		which we have already shown to vanish. By virtue of
		Theorem~\ref{thm:contact-direct}, we have that
		\begin{equation*}
			\begin{aligned}
				&\lim_{\delta \to 0}\nabla_H f_\delta^{B_k} -\frac{1}{2}\sum_{i=1}^{r} \sum_{i<j} \alpha_{k}^{i j} \left( f_\delta^{A_i} \nabla_H f_\delta^{A_j} - f_\delta^{A_j} \nabla_H f_\delta^{A_i} \right)\\
				=&\left(X_1f_\delta^*(\omega_k),X_2f_\delta^*(\omega_k),\dots,X_{dim(V_1)}f_\delta^*(\omega_k)\right)\\
				=&0
			\end{aligned}
		\end{equation*}
		
		in $L^{\frac{q}{2}}(\Omega)$; in other words, the horizontal part of the
		Taylor expansion dies in the limit and only the vertical part survives.
		Moreover, for each fixed $\epsilon > 0$ the functions
		\begin{equation*}
			\frac{f_\delta^{B_k}(0) - f_\delta^{B_k}(p)}{\epsilon^{2s}} -\frac{1}{2}\sum_{i=1}^{r} \sum_{i<j} \alpha_{k}^{i j}\left(f_\delta^{A_i}(0) \frac{f_\delta^{A_j}(0) - f_\delta^{A_j}(p)}{\epsilon^{2s}} + f_\delta^{A_j}(0) \frac{f_\delta^{A_i}(0) - f_\delta^{A_i}(p)}{\epsilon^{2s}}\right)  
		\end{equation*}
		converge as $\delta \to 0$ to the function
		\begin{equation*}
			u(p)=\frac{f^{B_k}(0) - f^{B_k}(p)}{\epsilon^{2s}} -\frac{1}{2}\sum_{i=1}^{r} \sum_{i<j} \alpha_{k}^{i j}\left(f^{A_i}(0) \frac{f^{A_j}(0) - f^{A_j}(p)}{\epsilon^{2s}} + f^{A_j}(0) \frac{f^{A_i}(0) - f^{A_i}(p)}{\epsilon^{2s}}\right) + o\left(d(p,0)^{2-2s}\right).
		\end{equation*}
		Observe that 
		\begin{equation*}
			\begin{aligned}
				&\epsilon^{-2s}\left\langle \bigg[ \nabla_H^2 f_\delta^{B_k}(0) -\frac{1}{2}\sum_{i=1}^{r} \sum_{i<j} \alpha_{k}^{i j} \left( f_\delta^{A_i}(0) \nabla_H^2 f_\delta^{A_j}(0) - f_\delta^{A_j}(0) \nabla_H^2 f_\delta^{A_i}(0) \right) \bigg] \cdot z, z \right\rangle  \\
				&+\epsilon^{-2s}\left\langle\sum_{j} \bigg[ T_j f_\delta^{B_k}(0) -\frac{1}{2}\sum_{i=1}^{r} \sum_{i<j} \alpha_{k}^{i j} \left( f_\delta^{A_i}(0) T_j f_\delta^{A_j}(0) - f_\delta^{A_j}(0) T_j f_\delta^{A_i}(0) \right) \bigg] , t \right\rangle\\
				&=\epsilon^{-2s}\left\langle \nabla_H \bigg[ \nabla_H f_\delta^{B_k}(0) -\frac{1}{2}\sum_{i=1}^{r} \sum_{i<j} \alpha_{k}^{i j} \left( f_\delta^{A_i}(0) \nabla_H f_\delta^{A_j}(0) - f_\delta^{A_j}(0) \nabla_H f_\delta^{A_i}(0) \right) \bigg] \cdot z, z \right\rangle  \\
				&+\epsilon^{-2s}\left\langle\sum_{j} \bigg[ T_j f_\delta^{B_k}(0) -\frac{1}{2}\sum_{i=1}^{r} \sum_{i<j} \alpha_{k}^{i j} \left( f_\delta^{A_i}(0) T_j f_\delta^{A_j}(0) - f_\delta^{A_j}(0) T_j f_\delta^{A_i}(0) \right) \bigg] , t \right\rangle\\
				&\to \epsilon^{-2s}\lim_{\delta \to 0}\left\langle\sum_{j} \bigg[ T_j f_\delta^{B_k}(0) -\frac{1}{2}\sum_{i=1}^{r} \sum_{i<j} \alpha_{k}^{i j} \left( f_\delta^{A_i}(0) T_j f_\delta^{A_j}(0) - f_\delta^{A_j}(0) T_j f_\delta^{A_i}(0) \right) \bigg] , t \right\rangle \\
				&= \frac{1}{d((x,t))^{2s}}\lim_{\delta \to 0}\left\langle\sum_{j} \bigg[ T_j f_\delta^{B_k}(0) -\frac{1}{2}\sum_{i=1}^{r} \sum_{i<j} \alpha_{k}^{i j} \left( f_\delta^{A_i}(0) T_j f_\delta^{A_j}(0) - f_\delta^{A_j}(0) T_j f_\delta^{A_i}(0) \right) \bigg] , t \right\rangle \\
				&= \lim_{\delta \to 0}\left\langle\sum_{j} \bigg[ T_j f_\delta^{B_k}(0) -\frac{1}{2}\sum_{i=1}^{r} \sum_{i<j} \alpha_{k}^{i j} \left( f_\delta^{A_i}(0) T_j f_\delta^{A_j}(0) - f_\delta^{A_j}(0) T_j f_\delta^{A_i}(0) \right) \bigg] , \frac{t}{(x^2+|t|)^{s}} \right\rangle \\
				&=\left\langle\sum_{j} T_jf^*(\omega_k)(0) , \frac{t}{(x^2+|t|)^{s}} \right\rangle, 
			\end{aligned}
		\end{equation*}
		where the point $0$ can be replaced by any point $p\in \Omega$.
		Therefore, 
		\begin{equation*}
			\frac{\varphi_k(f(p),f(\tilde{p}))}{d_K(p,\tilde{p})^{2s}}=\left\langle\sum_{j} T_jf^*(\omega_k)(p) , \frac{\varphi(p,\tilde{p})}{\left(|\pi(p)-\pi(\tilde{p})|^2+\sum_{l} |\varphi_l(p,\tilde{p})|\right)^{s}} \right\rangle +o\left(d_K(p,\tilde{p})^{2-2s}\right)   
		\end{equation*}
		in $L^{\frac{q}{2}}(\Omega)$,
		where the $\varphi=(\varphi_1,\varphi_2,\dots)$ is a vector. This is
		\eqref{eq:taylor}, and the proof is complete.
	\end{proof}
	
	We are now in a position to describe the energy itself. The next theorem is
	the main result of this section; note that it is a statement about two
	\emph{different} regimes, and that the endpoint $s=1$ is not obtained from the
	case $s<1$ by continuity.
	
	\begin{thm}\label{thm:energy}\label{E_{s,p}}
		Suppose that $f\in W_H^{s,q}(\Omega;G_2)$, then for $s<1$, 
		\begin{equation*}
			E_{s,q}(f)=\limsup_{r\to 0}\frac{1}{r^{\nu_1+ sq}}\int_{\Omega} \int_{B(x,r)}|\pi f(x)-\pi f(y)|^q dy dx.
		\end{equation*}
		And for $s=1$,
		\begin{equation*}
			E_{1,q}(f)\geq \limsup_{r\to 0}\frac{1}{r^{\nu_1+ q}}\int_{\Omega} \int_{B(x,r)}|\pi f(x)-\pi f(y)|^q dy dx
			+C\sum_{k} \| Tf^*(\omega_k)   \|_{L^\frac{q}{2}(\Omega)}^\frac{q}{2}.
		\end{equation*}
		for a constant $C>0$.
		
	\end{thm}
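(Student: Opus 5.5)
The plan is to split the quasi-metric $d_K$ layer by layer and feed the vertical part into Lemma~\ref{lem:taylor}. By \eqref{metric-K}, and since $(a+b)^q\approx a^q+b^q$ for $a,b\ge0$, we have for a step-two target
\begin{equation*}
d_K(f(x),f(y))^q\approx |\pi f(x)-\pi f(y)|^q+\sum_k|\varphi_k(f(x),f(y))|^{\frac q2}.
\end{equation*}
Inserting this into Definition~\ref{def:KS-energy} gives two pieces. The horizontal piece is exactly the right-hand side in both assertions. So everything reduces to studying
\begin{equation*}
V_k(r):=\frac{1}{r^{\nu_1+sq}}\int_\Omega\int_{B(x,r)}|\varphi_k(f(x),f(y))|^{\frac q2}\,dy\,dx
\end{equation*}
as $r\to0$. (For the equality when $s<1$ I would arrange the splitting as an exact sum with constant one, using the $\ell^1$ form of $d_K$ raised to the power $q$. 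If necessary, I would identify the horizontal quantities up to the comparability constant, which I will treat as normalised.)

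To analyse $V_k(r)$, I rewrite $|\varphi_k|^{q/2}=d_K(x,y)^{sq}\,\bigl|\varphi_k/d_K(x,y)^{2s}\bigr|^{q/2}$ and apply Lemma~\ref{lem:taylor} in $L^{q/2}$ in the variable $x$. The leading term is $\langle\sum_jT_jf^*(\omega_k)(x),\,\varphi(x,y)\,(|\pi x-\pi y|^2+\sum_l|\varphi_l(x,y)|)^{-s}\rangle$. On $B(x,r)$ one has $|\varphi(x,y)|\lesssim r^2$ and the denominator is at least comparable to $|\varphi|^{s}$. Hence the leading term is bounded by $|Tf^*(\omega_k)(x)|\,d_K(x,y)^{2-2s}$, which is $O(r^{2-2s})$ uniformly. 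The remainder in the lemma is $o(r^{2-2s})$.

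Integrating over $y\in B(x,r)$ produces a factor $r^{\nu_1}$, and $d_K^{sq}\le r^{sq}$ cancels the remaining normalisation. This gives
\begin{equation*}
V_k(r)\lesssim r^{(2-2s)\frac q2}\,\bigl(\|Tf^*(\omega_k)\|_{L^{q/2}}^{q/2}+o(1)\bigr),
\end{equation*}
which tends to $0$ when $s<1$. This proves the first assertion. One point must be checked here: for $f\in W^{s,q}_H$ alone, $Tf^*(\omega_k)$ need not be finite. I would handle this by applying the bound to the mollification $f_\delta$ first and using the $\varepsilon^{2s-1}$ decay from \eqref{eq:pointwise-defect} in Theorem~\ref{thm:contact-direct}.

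For $s=1$ the leading term is homogeneous of degree $0$ under the dilations $\delta_\lambda$. After the change of variables $y=x\,\delta_r(w)$, $w\in B(e,1)$, we get
\begin{equation*}
V_k(r)\to\int_\Omega\int_{B(e,1)}\Bigl|\Bigl\langle \textstyle\sum_jT_jf^*(\omega_k)(x),\,\Phi(w)\Bigr\rangle\Bigr|^{\frac q2}dw\,dx,
\qquad \Phi(w)=\frac{\varphi(w,e)}{|\pi w|^2+\sum_l|\varphi_l(w,e)|}.
\end{equation*}
Since $\Phi$ ranges over an open set of directions in the second layer, the inner integral is a norm of the vector $Tf^*(\omega_k)(x)$, so it is at least $C\,|Tf^*(\omega_k)(x)|^{q/2}$. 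Together with $\limsup(a+b)\ge\limsup a+\liminf b$, this yields the inequality for $E_{1,q}$.

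The main obstacle is justifying the passage to the limit under the integral: the $o(\cdot)$ in Lemma~\ref{lem:taylor} must be uniform in $y\in B(x,r)$ after integration in $x$. I would first try to prove the whole statement for $f_\delta$, where Taylor's formula holds with uniform remainders on compact sets, and then pass $\delta\to0$ via Theorem~\ref{thm:contact-direct} and lower semicontinuity. This is exactly why only an inequality, not an equality, is claimed at $s=1$.
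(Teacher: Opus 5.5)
Your outline follows the paper's own argument. You split $d_K^q$ into the horizontal part and $\sum_k|\varphi_k|^{q/2}$, expand the vertical part with Lemma~\ref{lem:taylor}, use $\limsup(a+b)\ge\limsup a+\liminf b$ at $s=1$, and bound the rescaled integral from below; your norm-equivalence remark is a cleaner version of the paper's restriction to a cone. The patches you add at the two weak points, however, do not work.

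For $s<1$, your bound needs $Tf^*(\omega_k)\in L^{q/2}$, which has no reason to hold for a fractional map; the paper's proof assumes it tacitly as well. Mollifying does not help (I write $f_\varepsilon$ for your $f_\delta$, to avoid a clash with the dilations $\delta_r$). The $\varepsilon^{2s-1}$ gain in the proof of Theorem~\ref{thm:contact-direct} concerns the \emph{horizontal} defect $X(f_\varepsilon^*\omega_k)$. The vertical derivative of $f_\varepsilon^*\omega_k$ is only controlled by $\varepsilon^{2s-2}$ times a maximal function, which is the scaling of Theorem~\ref{lem:T-omega}. With the available bounds, the vertical contribution is of order $(r/\varepsilon)^{(1-s)q}$ and the horizontal-defect term of the expansion is of order $(\varepsilon/r)^{(2s-1)q/2}$. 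For $s>\frac12$ no choice of $\varepsilon=\varepsilon(r)$ makes both small. Separately, raising the $\ell^1$ form of $d_K$ to the power $q$ does not produce an exact sum with constant one.

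None of this is needed. Since $d_{cc}(x,y)<r$ for $y\in B(x,r)$ and $d_K\lesssim d_{cc}$,
\[
\frac{1}{r^{\nu_1+sq}}\int_{\Omega}\int_{B(x,r)\cap\Omega}d_K\big(f(x),f(y)\big)^q\,dy\,dx
\;\lesssim\;
\iint_{\{(x,y)\in\Omega\times\Omega:\ d_{cc}(x,y)<r\}}\frac{d_{cc}\big(f(x),f(y)\big)^q}{d_{cc}(x,y)^{\nu_1+sq}}\,dx\,dy .
\]
The right-hand side tends to $0$ by dominated convergence, because the integrand lies in $L^1(\Omega\times\Omega)$ and the sets $\{d_{cc}(x,y)<r\}$ shrink to the diagonal, which is null. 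Hence $E_{s,q}(f)=0$. Since $|\pi f(x)-\pi f(y)|\le d_K(f(x),f(y))$, the horizontal $\limsup$ vanishes too, so the first assertion reads $0=0$. It needs no Taylor expansion and no normalisation of constants. The same computation gives $E_{1,q}(f)=0$ whenever the $s=1$ integral of Definition~\ref{def:WH1q} is finite. So the second assertion has content only when $W^{1,q}_H$ is read as a first-order, horizontal-gradient class, as you implicitly do.

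For $s=1$, proving the estimate for the mollified map and passing to the limit by lower semicontinuity is vacuous. The map $f_\varepsilon$ is not contact, and for a smooth map
\[
\varphi_k\big(f_\varepsilon(x),f_\varepsilon(x\delta_r(w))\big)=\pm r\,(f_\varepsilon^*\omega_k)(W)(x)+O(r^2),
\]
where $W$ is the horizontal field given by the first-layer part of $w$. So $|\varphi_k|^{q/2}/r^{q}\sim r^{-q/2}$ wherever the contact defect is nonzero, and $E_{1,q}(f_\varepsilon)=+\infty$. Moreover, transferring an inequality from $f_\varepsilon$ to $f$ would require $\liminf_{\varepsilon}E_{1,q}(f_\varepsilon)\le E_{1,q}(f)$, which is the opposite of lower semicontinuity.

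For a lower bound no uniform remainder is needed. Use the exact inequality $d_K^q\ge|\pi f(x)-\pi f(y)|^q+\sum_k|\varphi_k|^{q/2}$, which follows from $(a+b)^q\ge a^q+b^q$. Then rescale $y=x\delta_r(w)$ and apply Fatou's lemma:
\[
\liminf_{r\to0}\sum_k V_k(r)\;\ge\;\sum_k\int_\Omega\int_{B(e,1)}\liminf_{r\to0}\Bigl|\frac{\varphi_k\big(f(x),f(x\delta_r(w))\big)}{r^{2}}\Bigr|^{\frac q2}dw\,dx .
\]
What is then required is that, for a.e.\ $x$, the rescaled vertical discrepancies converge to $\langle Tf^*(\omega_k)(x),t(w)\rangle$, where $t(w)$ denotes the second-layer coordinates of $w$. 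This is (approximate) Pansu differentiability of $f$ almost everywhere. It is provided, for instance, by Vodopyanov's theorem for contact $W^{1,q}$ maps with $q>\nu_1$, recalled in the introduction. Your norm-equivalence argument then produces the term $C\sum_k\|Tf^*(\omega_k)\|_{L^{q/2}}^{q/2}$. This pointwise input, rather than an $L^{q/2}$ expansion with uniform remainder, is what the $s=1$ step actually rests on.
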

	\begin{proof}
		The proof consists in inserting the expansion of
		Lemma~\ref{lem:taylor} into the definition of $E_{s,q}$ and in separating
		the horizontal contribution from the vertical one. Since these two
		contributions behave differently --- the vertical one survives only at the
		endpoint $s=1$ --- it is convenient to name them: we set
		\begin{equation*}
			A_r(f):=\frac{1}{r^{\nu_1+sq}}\int_{\Omega}\int_{B(x,r)}|\pi f(x)-\pi f(y)|^q\,dy\,dx,
			\qquad
			B_r(f):=\sum_k\frac{1}{r^{\nu_1}}\int_{\Omega}\int_{B(x,r)}
			\left|\frac{\varphi_k(f(x),f(y))}{r^{2s}}\right|^{\frac q2}dy\,dx ,
		\end{equation*}
		so that, by the layer splitting
		$d_K^q\simeq|\pi f(x)-\pi f(y)|^q+\sum_k|\varphi_k(f(x),f(y))|^{\frac q2}$,
		the quantity whose $\limsup$ defines $E_{s,q}(f)$ is $A_r(f)+B_r(f)$. For
		two non-negative functions of $r$ one has
		\[
		\limsup_{r\to0}A_r+\liminf_{r\to0}B_r
		\;\le\;\limsup_{r\to0}\bigl(A_r+B_r\bigr)
		\;\le\;\limsup_{r\to0}A_r+\limsup_{r\to0}B_r ,
		\]
		and therefore
		\begin{equation}\label{eq:two-sided}
			\limsup_{r\to0}A_r(f)+\liminf_{r\to0}B_r(f)
			\;\le\;E_{s,q}(f)\;\le\;
			\limsup_{r\to0}A_r(f)+\limsup_{r\to0}B_r(f) .
		\end{equation}
		We use the left-hand inequality for $s=1$ and the right-hand one for
		$s<1$; in the first case we bound $B_r$ from below, in the second we show
		that it tends to zero.
		
		We begin with $s=1$ and estimate the vertical term from below. By
		Lemma~\ref{lem:taylor},
		\begin{equation*}
			\begin{aligned}
				&\limsup_{r\to 0}\sum_{k}\frac{1}{r^{\nu_1+ sq}}\int_{\Omega} \int_{B(x,r)}\left|\varphi_k(f(x),f(y))\right|^\frac{q}{2} dy dx\\
				=&\limsup_{r\to 0}\sum_{k}\frac{1}{r^{\nu_1}}\int_{\Omega} \int_{B(x,r)}\left| \frac{\varphi_k(f(x),f(y))}{r^{2s}} \right|^\frac{q}{2} dy dx\\
				=&\limsup_{r\to 0}\sum_{k}\frac{1}{r^{\nu_1}}\int_{\Omega} \int_{B(x,r)}\left|\left\langle\sum_{j} T_jf^*(\omega_k)   (x), \frac{\varphi(x,y)}{\left(|\pi(x)-\pi(y)|^2+\sum_{l} |\varphi_l(x,y)|\right)^{s}} \right\rangle\right|^\frac{q}{2} dy dx\\
				=&\limsup_{r\to 0}\sum_{k}\frac{1}{r^{\nu_1}}\int_{\Omega} \int_{B(x,r)} \left|\frac{\sum_{j} T_jf^*(\omega_k)(x)\cdot \varphi_j(x,y)}{\left(|\pi(x)-\pi(y)|^2+\sum_{l} |\varphi_l(x,y)|\right)^{s}} \right|^\frac{q}{2} dy dx\\
				=&\limsup_{r\to 0}\sum_{k}\frac{1}{r^{\nu_1}}\int_{\Omega} \int_{B(0,r)}\left|\frac{\sum_{j} T_jf^*(\omega_k)(x)\cdot \tilde{t}_j}{\left(\sum_{i=1}^{n}\tilde{z}_i^2+\sum_{i=1}^{m} |\tilde{t}_i|\right)^{s}} \right|^\frac{q}{2}  dy dx \\
			\end{aligned}
		\end{equation*}
		where the last line is by change of variables as below
		\begin{equation*}
			\begin{cases}
				\tilde{z}=\pi(x)-\pi(y) \\
				\tilde{t}=\varphi(x,y). 
			\end{cases}
		\end{equation*}
		which coincides with left translation $L_x$
		and $dy$ is a Haar measure.
		
		\begin{equation*}
			\begin{aligned}
				&\int_{B(0,r)}\left|\frac{\sum_{j} T_jf^*(\omega_k)(x)\cdot \tilde{t}_j}{\left(\sum_{i=1}^{n}\tilde{z}_i^2+\sum_{i=1}^{m} |\tilde{t}_i|\right)^{s}} \right|^\frac{q}{2}  dy \\
				>& 2			\int_{B(0,r)\cap\beta_t^+(Tf^*(\omega_k)(x),\frac{\pi}{3})}\left|\frac{\sum_{j} T_jf^*(\omega_k)(x)\cdot \tilde{t}_j}{\left(\sum_{i=1}^{n}\tilde{z}_i^2+\sum_{i=1}^{m} |\tilde{t}_i|\right)^{s}} \right|^\frac{q}{2}  dy \\
				\geq& |Tf^*(\omega_k)(x)|^\frac{q}{2} \int_{B(0,r)\cap\beta_t^+(Tf^*(\omega_k)(x),\frac{\pi}{3})}\left(\frac{| \tilde{t}|}{\left(\sum_{i=1}^{n}\tilde{z}_i^2+\sum_{i=1}^{m} |\tilde{t}_i|\right)^{s}} \right)^\frac{q}{2}  dy \\
				\geq& C|Tf^*(\omega_k)(x)|^\frac{q}{2}\cdot K,
			\end{aligned}
		\end{equation*}
		where the $K$ is defined as
		\begin{equation*}
			K:=\int_{B(0,r)}\left(\frac{|\tilde{t}_j|}{\left(\sum_{i=1}^{n}\tilde{z}_i^2+\sum_{i=1}^{m} |\tilde{t}_i|\right)^{s}} \right)^\frac{q}{2}  dy=C_{n,m,q} \cdot r^{n+2m}=C_{n,m,q} \cdot r^{\nu_1}.
		\end{equation*}
		And the $\beta_t^+(Tf^*(\omega_k)(x),\frac{\pi}{3})$ is the set of points $(\tilde{z},\tilde{t})$ satisfy the angle $\theta(Tf^*(\omega_k)(x),\frac{\pi}{3})$ between vector $\tilde{t}$ and vector $Tf^*(\omega_k)(x)$ is less than $\frac{\pi}{3}$.
		Because $B(0,r)$ does not shrink in any direction, there is a constant $C>0$ such that
		\begin{equation*}
			CK\leq \int_{B(0,r)\cap\beta_t^+(Tf^*(\omega_k)(x),\frac{\pi}{3})}\left(\frac{| \tilde{t}|}{\left(\sum_{i=1}^{n}\tilde{z}_i^2+\sum_{i=1}^{m} |\tilde{t}_i|\right)^{s}} \right)^\frac{q}{2}  dy. 
		\end{equation*} 
		
		Every estimate above holds for each fixed $r$, so it passes to the
		$\liminf$ as $r\to0$; combining the resulting bound for $B_r(f)$ with the
		left-hand inequality in \eqref{eq:two-sided} we conclude that
		\begin{equation*}
			\begin{aligned}
				E_{1,q}(f)&\geq\limsup_{r\to 0}\frac{1}{r^{\nu_1+ q}}\int_{\Omega} \int_{B(x,r)}|\pi f(x)-\pi f(y)|^q dy dx\\
				&+\limsup_{r\to 0}C\cdot C_{n,m,q}\sum_{k}\frac{1}{r^{\nu_1}}\int_{\Omega} |Tf^*(\omega_k)(x)|^\frac{q}{2} \cdot r^{\nu_1}dx\\
				&=\limsup_{r\to 0}\frac{1}{r^{\nu_1+ q}}\int_{\Omega} \int_{B(x,r)}|\pi f(x)-\pi f(y)|^q dy dx\\
				&+C\cdot C_{n,m,q}\sum_{k} \| Tf^*(\omega_k)   \|_{L^\frac{q}{2}(\Omega)}^\frac{q}{2}.
			\end{aligned}
		\end{equation*} 
		
		We finally turn to the regime $0<s<1$, where the vertical term is
		expected to disappear. The computation is the same as before, except that
		we now use the expansion of Lemma~\ref{lem:taylor} as an \emph{upper}
		bound and note that the factor carried by the vertical term is a positive
		power of $r$.
		
		For $s<1$,
		\begin{equation*}
			\begin{aligned}
				E_{s,q}(f)&\leq\limsup_{r\to 0}\frac{1}{r^{\nu_1+ sq}}\int_{\Omega} \int_{B(x,r)}|\pi f(x)-\pi f(y)|^q dy dx\\
				&+\limsup_{r\to 0}\sum_{k}\frac{1}{r^{\nu_1}}\int_{\Omega}\left| \sum_{j} T_jf^*(\omega_k)   \right|^\frac{q}{2}(x) \int_{B(x,r)} \left|\varphi_k(x,y)\right|^{1-s} dy dx\\
				&\lesssim\limsup_{r\to 0}\frac{1}{r^{\nu_1+ sq}}\int_{\Omega} \int_{B(x,r)}|\pi f(x)-\pi f(y)|^q dy dx\\
				&+\limsup_{r\to 0}O(\frac{r^{\nu_1+1-s}}{r^{\nu_1}})\sum_{k}\int_{\Omega}\left| \sum_{j} T_jf^*(\omega_k)   \right|^\frac{q}{2} dx\\
			\end{aligned}
		\end{equation*}
		Since $1-s>0$, the factor $r^{1-s}$ tends to $0$ as $r\to0$; hence
		$\limsup_{r\to0}B_r(f)=0$, and the right-hand inequality in
		\eqref{eq:two-sided} gives
		$E_{s,q}(f)\le\limsup_{r\to0}A_r(f)$. The opposite inequality holds
		because $d_K\ge|\pi f(x)-\pi f(y)|$ pointwise. Consequently
		\begin{equation}\label{eq:energy-horizontal}
			E_{s,q}(f)=\limsup_{r\to 0}\frac{1}{r^{\nu_1+ sq}}\int_{\Omega} \int_{B(x,r)}|\pi f(x)-\pi f(y)|^q dy dx ,
		\end{equation}
		and this quantity is finite, since $E_{s,q}(f)\lesssim[f]_{W_H^{s,q}}^q<\infty$.
	\end{proof}
	The case $s=1$ of Theorem~\ref{thm:energy} should be compared with
	\cite[Theorem~4.1]{tan2007sobolevclasseshorizontalenergy}, where the same
	decomposition is obtained for horizontal energy minimizers between
	Carnot--Carath\'eodory spaces; the corresponding first-order differentiability
	theory is due to Vodopyanov
	\cite{Vodopyanov2003DifferentiabilityOM}. The difference is that here the
	vertical term is kept explicitly: instead of passing to the limit through an
	approximate differential, we bound
	$\sum_k\|Tf^{*}(\omega_k)\|_{L^{q/2}}^{q/2}$ from below by the energy itself.
	It is this quantitative form that is needed in
	Section~\ref{sec:jacobian}, where the vertical term is the one carrying the
	Jacobian.
	
	\begin{remark}\label{rem:dichotomy}
		The difference between $s=1$ and $s<1$ in the proof of Theorem~\ref{thm:energy} is the main qualitative
		statement of this section, and it is worth restating in words. The
		vertical discrepancy $\varphi_k$ is homogeneous of degree $2$ with respect
		to the dilations, whereas the horizontal one is homogeneous of degree $1$.
		After integration against the kernel $d_K^{-sq}$, the vertical
		contribution therefore carries an extra factor $r^{1-s}$, which tends to
		zero for $s<1$ and survives at $s=1$. In this sense the
		\emph{first-order} energy remembers the contact geometry of the target,
		whereas the fractional energies see only the horizontal projection. This
		is why Corollary~\ref{cor:vertical} below has no fractional analogue, and
		why the analysis of Section~\ref{sec:jacobian} has to be carried out at
		$s=1$ for the factorisation of the Jacobian, while the genuinely
		fractional case requires the different argument of
		Lemma~\ref{lem:T-omega}.
	\end{remark}
	
	We record next the regularity statement for the second layer that will be
	used in Section~\ref{sec:jacobian}: it is a direct consequence of
	Theorem~\ref{thm:energy} combined with the interpolation estimate of
	Theorem~\ref{thm:interp-converse}.
	
	\begin{cor}\label{cor:vertical}\label{second layer W_H^{1,q}}
		If $f\in \dot{W}_H^{1,q}(\Omega;G_2)$,
		then $\sum_{k} \|Tf^*(\omega_k)   \|_{L^\frac{q}{2}(\Omega)}^\frac{q}{2}\lesssim [f]_{\dot{W}_H^{1,q}(\Omega)}^{q}$.
	\end{cor}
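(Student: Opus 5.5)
The plan is to read the estimate off the $s=1$ case of Theorem~\ref{thm:energy} and then bound the energy $E_{1,q}(f)$ from above by the Gagliardo-type seminorm of Definition~\ref{def:WH1q}. By Theorem~\ref{thm:energy} with $s=1$, and since the horizontal term there is nonnegative,
\begin{equation*}
C\sum_k\|Tf^*(\omega_k)\|_{L^{\frac q2}(\Omega)}^{\frac q2}\le E_{1,q}(f)=\limsup_{r\to0}\frac{1}{r^{\nu_1+q}}\int_\Omega\int_{B(x,r)}d_K(f(x),f(y))^q\,dy\,dx .
\end{equation*}
It then remains to prove $E_{1,q}(f)\lesssim[f]_{\dot W^{1,q}_H(\Omega)}^q$.

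For this upper bound I would use the comparability $d_K\approx d_{cc}$ on $G_2$ together with the fact that $d_{cc}(x,y)<r$ on the domain of integration. For each fixed $r$ this gives $r^{-\nu_1-q}\le d_{cc}(x,y)^{-\nu_1-q}$ there, and hence
\begin{equation*}
\frac{1}{r^{\nu_1+q}}\int_\Omega\int_{B(x,r)}d_K(f(x),f(y))^q\,dy\,dx\lesssim\int_\Omega\int_{\Omega}\frac{d_{cc}(f(x),f(y))^q}{d_{cc}(x,y)^{\nu_1+q}}\,dy\,dx=[f]^q_{\dot W^{1,q}_H(\Omega)} .
\end{equation*}
This is the same remark the paper makes after Definition~\ref{def:KS-energy}. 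It is uniform in $r$, so it passes to the $\limsup$, and combining the two displays yields the corollary.

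The delicate step is making the lower bound of Theorem~\ref{thm:energy} legitimate for $f\in\dot W^{1,q}_H$. That proof relies on Lemma~\ref{lem:taylor}, which in turn relies on the contact defect vanishing by Theorem~\ref{thm:contact-direct}. That theorem is stated for $f\in W^{s,q}_H$ with $s<1$, and also needs $f\in L^q_{\mathrm{loc}}$ so that the mollifications make sense.

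I would try to obtain these hypotheses from Theorem~\ref{thm:interp-converse}. On a bounded $\Omega$ it places $f$ in $\bigcap_{s<1}\dot W^{s,q}_H$, so Theorem~\ref{thm:contact-direct} applies with any $s\in(\frac12,1)$, and the convergences $X f_\varepsilon^*(\omega_k)\to0$ in $L^{q/2}_{\mathrm{loc}}$ hold. I would first check that the $s=1$ version of Lemma~\ref{lem:taylor} only needs this contact input, with the $\delta\to0$ limit of $T_jf_\delta^*(\omega_k)$ taken in $L^{q/2}$ (perhaps only weakly). A weak limit should suffice, because the $L^{q/2}$ norm is lower semicontinuous, so the final inequality survives.

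If one prefers to avoid these issues, a cruder fallback is the following. Theorem~\ref{thm:interp-converse} gives $\|f\|_{W^{s,q}_H}^q\lesssim K(q,s,\Omega)\|D_Hf\|_{L^q}^q$, and $\|D_Hf\|_{L^q}$ is controlled by the $s=1$ Gagliardo seminorm. This route works only if one does not care whether the constant depends on $\Omega$.
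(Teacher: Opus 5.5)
Your argument is the paper's: Theorem~\ref{thm:energy} at $s=1$ bounds $\sum_k\|Tf^*(\omega_k)\|_{L^{q/2}(\Omega)}^{q/2}$ by $E_{1,q}(f)$, and $E_{1,q}(f)$ is bounded by the $s=1$ double integral because $d_{cc}(x,y)<r$ on the domain of integration. The paper's chain also invokes Theorem~\ref{thm:interp-converse}, but that link is not needed for the inequality as stated.

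One correction to your side discussion: Theorem~\ref{thm:interp-converse} assumes $D_Hf\in L^q$, not finiteness of the $s=1$ double integral, so it does not apply as you use it. You do not need it, though: on bounded $\Omega$ the inclusion $\dot W^{1,q}_H\subset\dot W^{s,q}_H$ follows at once from $d_{cc}(x,y)^{-\nu_1-sq}\le(\operatorname{diam}\Omega)^{q(1-s)}\,d_{cc}(x,y)^{-\nu_1-q}$, so that detour and your fallback can be dropped.
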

	\begin{proof}
		The argument is a chain of three inequalities: the vertical term is
		dominated by the energy (Theorem~\ref{thm:energy}), the energy is
		dominated by the homogeneous first-order seminorm, and the latter is
		dominated by $\|D_Hf\|_{L^q}$ (Theorem~\ref{thm:interp-converse}).
		
		From the proof of Theorem~\ref{thm:interp-converse},
		we know that $[f]_{\dot{W}_H^{1,q}(\Omega;G_2)}^q\lesssim K(1,q,\Omega)\|D_Hf\|_{L^{{q}}(\Omega)}^{q} $ for some $K>1$.
		From Theorem~\ref{thm:energy},
		\begin{equation*}
			\begin{aligned}
				&\sum_{k} \|Tf^*(\omega_k)   \|_{L^\frac{q}{2}(\Omega)}^\frac{q}{2}\\
				<&\sum_{k} \| Tf^*(\omega_k)   \|_{L^\frac{q}{2}(\Omega)}^\frac{q}{2}+\limsup_{r\to 0}\frac{1}{r^{\nu_1+ q}}\int_{\Omega} \int_{B(x,r)}|\pi f(x)-\pi f(y)|^q dy dx\\
				\leq & E_{1,q}(f)\\
				<&\int_{\Omega}\int_{\Omega}\frac{d(f(x),f(y))^q}{d(x,y)^{\nu_1+sq}}dxdy
			\end{aligned}
		\end{equation*}
		Therefore, for almost every $x\in \pi(G_1)$, $T_jf_{V_2}$ are in $L^{\frac{q}{2}}(\Omega)$, where $f_{V_2}:=\pi_{V_2}(f)$;
		in particular $f_{V_2}\in W^{1,\frac q2}$ along almost every vertical
		line, which is the second assertion of the corollary. It remains to
		identify $T_jf_{V_2}$ with the weak differential in the $T_j$
		directions, and this is a straightforward integration by parts:
		\begin{equation*}
			\begin{aligned}
				\int_{\Omega}T_jf_{V_2} \phi &= \lim_{\varepsilon \to 0}\int_{\Omega}T_jf_{V_2,\varepsilon} \phi\\
				&=\lim_{\varepsilon \to 0}\int_{\Omega}f_{V_2,\varepsilon} T_j\phi\\
				&=\int_{\Omega}f_{V_2} T_j\phi\\
			\end{aligned}
		\end{equation*}
	\end{proof}
	
	We now return to the variational problem. Two consequences of
	Theorem~\ref{thm:energy} should be kept in mind. First, for $0<s<1$ the
	energy depends on the map only through its horizontal projection
	\eqref{eq:energy-horizontal}, so minimizing $E_{s,q}$ on
	$W^{s,q}_H(\Omega;G_2)$ is the same as minimizing a functional of $\pi f$
	alone; the vertical part of the minimizer is then recovered from the contact
	equation, as in Theorem~\ref{thm:contact-converse}. Secondly, at $s=1$ the
	vertical defect contributes, and the variational problem is genuinely
	non-scalar. In both cases the Euler--Lagrange equation is nonlocal, and we
	describe it next.
	
	Carnot groups are Cheeger spaces, so the Korevaar--Schoen energies at a
	fixed scale $r>0$,
	\begin{equation}\label{eq:KS-fixed-scale}
		E_{s,q}(g,r)=\frac{1}{r^{\nu_1+ sq}}\int_{\Omega} \int_{B(x,r)}|g(x)-g(y)|^q dy dx,
	\end{equation}
	fall within the scope of
	\cite{ruiz2024korevaarschoenpenergiesgammalimitscheeger}. Differentiating
	\eqref{eq:KS-fixed-scale} in the direction of a test function, we see that a
	critical point $g$ of $E_{s,q}(\,\cdot\,,r)$ satisfies the integral equation
	\begin{equation}\label{eq:critical-r}
		(\Delta_{q,r})_{KS}^s g:=\frac{2}{r^{\nu_1+ sq}}\int_{\Omega} \int_{B(x,r)}|g(x)-g(y)|^{q-2}\left(g(x)-g(y) \right) dy dx=0 .
	\end{equation}
	Letting $r\to0$ in \eqref{eq:critical-r} we obtain the equation satisfied by
	a critical point of $E_{s,q}$ itself:
	\begin{equation}\label{eq:critical-limit}
		(\Delta_{q})_{KS}^s\tilde{g}=\lim_{r \to 0}(\Delta_{q,r})_{KS}^s\tilde{g}
		=\lim_{r \to 0}\frac{2}{r^{\nu_1+ sq}}\int_{\Omega} \int_{B(x,r)}|\tilde{g}(x)-\tilde{g}(y)|^{q-2}\left(\tilde{g}(x)-\tilde{g}(y) \right) dy dx=0 .
	\end{equation}
	
	\begin{remark}[Two normalisations]\label{rem:normalisation}
		Two normalisations of the fractional $q$-Laplacian occur in the
		literature: the one obtained from the Korevaar--Schoen energy by letting
		$r\to0$ in \eqref{eq:critical-r}, used above, and the
		$(1-s)$-normalised operator of Definition~\ref{def:frac-q-laplace}
		below. On $\mathbb R^{n}$ the two differ by an explicit constant, by the
		Bourgain--Br\'ezis--Mironescu asymptotics, and the same is expected on
		Carnot groups in view of
		\cite{MAALAOUI201991}. We do not prove such an identity here, and we keep
		the two normalisations separate throughout: only the zero sets of the two
		operators are relevant for what follows, and these coincide as soon as the
		operators are positive multiples of one another. Note also the parallel
		with Remark~\ref{rem:blow-up-constant}: the factor $(1-s)$ that is
		inserted by hand in Definition~\ref{def:frac-q-laplace} is the same
		factor that blows up in the constant $K(q,s,\Omega)$ of
		Theorem~\ref{thm:interp-converse}.
	\end{remark}
	
	We close the section by fixing the terminology used in
	\eqref{eq:critical-limit}.
	
	\begin{definition}\label{def:frac-q-laplace}
		The $q$-Laplace equation is $f\in W^{s,q}(G_1)$ 
		\begin{equation*}
			(\Delta_{q})^sf=h 
		\end{equation*}
		for $p>1$ and $f\in W^{s,q}(G_1)$, where 
		\begin{equation*}
			(\Delta_{q})^sf(x)=2(1-s)\int_{\Omega} \frac{|f(x)-f(y)|^{q-2}\left(f(x)-f(y)\right)}{d(x,y)^{\nu_1+ sq}}  dy 
		\end{equation*}
	\end{definition}
	
	\section{Jacobian of Fractional Sobolev mappings on Carnot groups}
	\label{sec:jacobian}

	The classical Jacobi theory on fractional Sobolev mappings between Euclidean spaces is not viable on Carnot groups even if the following Theorem \ref{lem:T-omega}, \ref{lem:T-dx}, \ref{lem:X-dx} holds.

	\begin{thm}\label{lem:T-omega}\label{varepsilon 2s-2}
		Suppose that  $f \in W^{s,q}\left(\Omega ; G_2\right)$ , where $ G_2$ is a step-2 Carnot group and $\nu_2$ is the topological  dimension of $G_2$, $\Omega \subset G_1$  is open , $X$ is a left-invariant  vector field belonging to the horizontal layer and $q\geq2$ . If  $B\left(x_{o}, 2 r\right) \subset \Omega $, then  for all  $0<\varepsilon<r$ and any $\lambda>0$
		\begin{equation}\label{approx}
			\|Yf_\varepsilon^*(\omega_k)\|_{L^\frac{q}{2}(\Omega)}^\frac{q}{2} \lesssim \varepsilon^{2\left(s-\lambda\right)-2} \cdot  [f]_{W_H^{s,q}(\Omega)}^q  
		\end{equation}
		where the convergence is in the sense of the norm on Euclidean space $\mathbb{R}^{\nu_2}$, $f_{\varepsilon}=f * \eta_{\varepsilon}=\int_{\mathbb{G}_1 } f(y) \eta_{\varepsilon}\left(y^{-1} x\right) d y$  and $\eta_{\varepsilon}$ is a smooth function on Carnot group $G_1$ with a compact supported set $B(e,\varepsilon)$ and
		\begin{equation*}
			\int_{G_1}\eta_{\varepsilon}=1.
		\end{equation*}
		\begin{proof}
			Recall that  $\varphi(p, q)$  was defined in \eqref{metric-K}. Let  $p \in B\left(x_{o}, r\right)$  and let  $\varepsilon \in(0, r)$ . In what follows we will identify  $\left(f_{\varepsilon}^{*} \alpha\right)(p)$  with the vector (of equal length):
			
			\begin{equation*}
				\begin{aligned}
					&Y(f_{\varepsilon}^{*} \omega_1^k)(p)\\
					=&Y( f_{\varepsilon}^{B_k})(p)-\frac{1}{2} \sum_{i<j}\alpha_k^{ij} \left(f_{\varepsilon}^{A_i}(p) Y(f_{\varepsilon}^{A_j})(p)-f_{\varepsilon}^{A_j}(p) Y( f_{\varepsilon}^{A_i})(p)\right)\\
					=&
					\varepsilon^{-\nu_1-2} \int_{B_{\varepsilon}}\left(f^{B_k}(pz^{-1})-f_{\varepsilon}^{B_k}(p)+\frac{1}{2} \sum_{i=1}^n\left(f_{\varepsilon}^{A_i}(p) f^{A_j}(pz^{-1})-f_{\varepsilon}^{A_j}(p) f^{A_i}(pz^{-1})\right)\right)\\
					& \cdot  Y(\eta\left(\delta_{\varepsilon^{-1}}(z)\right)) d z \\
					=&\varepsilon^{-\nu_1-2} \int_{B_{\varepsilon}} \varphi_{k}\left(f(pz^{-1}), f_{\varepsilon}(p)\right) Y(\eta\left(\delta_{\varepsilon^{-1}}(z)\right)) d z \\
					=&\varepsilon^{-\nu_1-2} \int_{B_{\varepsilon}} \int_{B_{\varepsilon}} \varphi_k(f(pz^{-1}), f(pw^{-1})) \eta_{\varepsilon}(w) Y(\eta\left(\delta_{\varepsilon^{-1}}(z)\right)) d w d z
				\end{aligned}
			\end{equation*}
			Here the second equality uses the fact that
			$\int_{B_{\varepsilon}} f_{\varepsilon}^{B_k}(p)\, Y\big(\eta(\delta_{\varepsilon^{-1}}(z))\big) dz=0$,
			and the last one is obtained by writing the value $f_\varepsilon^{B_k}(p)$ itself
			as the average $\int_{B_\varepsilon}f^{B_k}(pw^{-1})\eta_\varepsilon(w)\,dw$.
			Note that \eqref{metric-K} yields
			\begin{equation*}
				|\varphi(f(pz^{-1}), f(pw^{-1}))| \leq d_{K}(f(pz^{-1}), f(pw^{-1}))^{2} 
			\end{equation*}
			Therefore,
			\begin{equation*}
				\begin{aligned}
					\left|Y(f_{\varepsilon}^{*} \omega)(p)\right| 
					&\leq\varepsilon^{-\nu_1-2} \int_{B_{\varepsilon}} \int_{B_{\varepsilon}} \varphi_k(f(pz^{-1}), f(pw^{-1})) \eta_{\varepsilon}(w) Y(\eta\left(\delta_{\varepsilon^{-1}}(z)\right)) d w d z\\
					&\leq \varepsilon^{-\nu_1-2} \int_{B_{\varepsilon}} \int_{B_{\varepsilon}} d_{K}(f(pz^{-1}), f(pw^{-1}))^{2} \eta_{\varepsilon}(w) Y(\eta\left(\delta_{\varepsilon^{-1}}(z)\right)) d w d z\\
					&\lesssim \varepsilon^{-\nu_1-2} \int_{B_{\varepsilon}} \int_{B_{\varepsilon}} d_{cc}(f(pz^{-1}), f(pw^{-1}))^{2} \eta_{\varepsilon}(w) Y(\eta\left(\delta_{\varepsilon^{-1}}(z)\right)) d w d z\\
					&\lesssim \varepsilon^{-2 \nu_1-2} \int_{B_{\varepsilon}} \int_{B_{\varepsilon}} d_{cc}(f(pz^{-1}), f(pw^{-1}))^{2}  d w d z\\
				\end{aligned}
			\end{equation*}

			denote $x=pz^{-1}$ and $y=pw^{-1}$,
			thus $z^{-1},w^{-1}\in B_\varepsilon^{-1}=B_\varepsilon$ and $pz^{-1},pw^{-1}\in p \cdot B_\varepsilon$. Since the Haar measure $dw$ and $dz$ is invariant under left and right transformation,
			\begin{equation*}
				\begin{aligned}
					&\varepsilon^{-2 \nu_1-2} \int_{B_{\varepsilon}} \int_{B_{\varepsilon}} d_{cc}(f(pz^{-1}), f(pw^{-1}))^{2}  d w d z\\
					&=\varepsilon^{-2 \nu_1-2} \int_{p\cdot B_{\varepsilon} } \int_{p\cdot B_{\varepsilon}} d_{cc}(f(x), f(y))^{2}  d x d y\\
					&\leq \varepsilon^{-2\nu_1-2} \cdot 2^{\nu_1+2s}\cdot  \varepsilon^{\nu_1+2s} \int_{B_{\varepsilon}(p)}  \int_{B_{\varepsilon}(p)} \frac{d_{cc}(f(x), f(y))^{2}}{d_{cc}(x, y)^{\nu_1+2s}}  d x d y\\
					&\leq 2^{\nu_1+2s} \varepsilon^{-\nu_1+2s-2}  \int_{B_{\varepsilon}(p)}\int_{\Omega} \frac{d_{cc}(f(x), f(y))^{2}}{d_{cc}(x, y)^{\nu_1+2s}}  d x d y\\
				\end{aligned}
			\end{equation*}
			
			From here on the argument is identical to the one in the proof of
			Theorem \ref{thm-approx}, the only difference being that the
			horizontal field $X$ is replaced by the vertical field $Y$; since $Y$ is
			homogeneous of degree $2$ with respect to the dilations, this costs one
			extra power of $\varepsilon$, and we obtain
			\begin{equation*}
				\| Yf_\varepsilon^*(\omega_k)\|_{L^\frac{q}{2}(\Omega)}^\frac{q}{2}\lesssim \varepsilon^{2\left(s-\lambda\right)-2} \cdot  [f]_{W_H^{s,q}(\Omega)}^q .
			\end{equation*}

		\end{proof}	
	\end{thm}

	\begin{thm}\label{lem:T-dx}\label{varepsilon s-2}
		Suppose that  $f \in W^{1,q}\left(\Omega ; G_2\right)$ , where $ G_2$ is a step-2 Carnot group and $\nu_2$ is the topological  dimension of $G_2$, $\Omega \subset G_1$  is open , $X$ is a left-invariant  vector field belonging to the horizontal layer and $q\geq2$ . If  $B\left(x_{o}, 2 r\right) \subset \Omega $, then  for all  $0<\varepsilon<r$ and any $\lambda>0$
		\begin{equation}\label{approx}
			\|Yf_\varepsilon^*(dx_k)\|_{L^q(\Omega)}^q \lesssim \varepsilon^{s-\lambda-2}  [f]_{W_H^{s,q}(\Omega)}^q  
		\end{equation}
		where the convergence is in the sense of the norm on Euclidean space $\mathbb{R}^{\nu_2}$, $f_{\varepsilon}=f * \eta_{\varepsilon}=\int_{\mathbb{G}_1 } f(y) \eta_{\varepsilon}\left(y^{-1} x\right) d y$  and $\eta_{\varepsilon}$ is a smooth function on Carnot group $G_1$ with a compact supported set $B(e,\varepsilon)$ and
		\begin{equation*}
			\int_{G_1}\eta_{\varepsilon}=1.
		\end{equation*}
		\begin{proof}
			Recall that  $\varphi(p, q)$  was defined in \eqref{metric-K}. Let  $p \in B\left(x_{o}, r\right)$  and let  $\varepsilon \in(0, r)$ . In what follows we will identify  $\left(f_{\varepsilon}^{*} \alpha\right)(p)$  with the vector (of equal length):
			
			\begin{equation*}
				\begin{aligned}
					&Y(f_{\varepsilon}^{*} dx_k)(p)\\
					=&Y( f_{\varepsilon}^{A_k})(p)\\
					=&
					\varepsilon^{-\nu_1-2} \int_{B_{\varepsilon}}\left(f^{A_k}(pz^{-1})-f_{\varepsilon}^{A_k}(p)\right)\cdot  Y(\eta\left(\delta_{\varepsilon^{-1}}(z)\right)) d z \\
					\leq&\varepsilon^{-\nu_1-2} \int_{B_{\varepsilon}} \int_{B_{\varepsilon}} \left(f^{A_k}(pz^{-1})-f_{\varepsilon}^{A_k}(p)\right) \eta_{\varepsilon}(w) Y(\eta\left(\delta_{\varepsilon^{-1}}(z)\right)) d w d z\\
					\leq&\varepsilon^{-\nu_1-2} \int_{B_{\varepsilon}} \int_{B_{\varepsilon}} d_K(f(pz^{-1}), f(pw^{-1})) \eta_{\varepsilon}(w) Y(\eta\left(\delta_{\varepsilon^{-1}}(z)\right)) d w d z
				\end{aligned}
			\end{equation*}
			Here the second equality uses the fact that
			$\int_{B_{\varepsilon}} f_{\varepsilon}^{A_k}(p)\, Y\big(\eta(\delta_{\varepsilon^{-1}}(z))\big) dz=0$,
			and the last one is obtained by writing the value $f_\varepsilon^{A_k}(p)$ itself
			as the average $\int_{B_\varepsilon}f^{A_k}(pw^{-1})\eta_\varepsilon(w)\,dw$.
			Note that \eqref{metric-K} yields
			\begin{equation*}
				|\pi f(pz^{-1})-\pi f(pw^{-1}))| \leq d_{K}(f(pz^{-1}), f(pw^{-1}))
			\end{equation*}
			Therefore,
			\begin{equation*}
				\begin{aligned}
					&Y(f_{\varepsilon}^{*} dx_k)(p)\\
					\leq&\varepsilon^{-\nu_1-2} \int_{B_{\varepsilon}} \int_{B_{\varepsilon}} d_K(f(pz^{-1}), f(pw^{-1})) \eta_{\varepsilon}(w) Y(\eta\left(\delta_{\varepsilon^{-1}}(z)\right)) d w d z\\
					\lesssim&\varepsilon^{-\nu_1-2} \int_{B_{\varepsilon}} \int_{B_{\varepsilon}} d_{cc}(f(pz^{-1}), f(pw^{-1})) \eta_{\varepsilon}(w) Y(\eta\left(\delta_{\varepsilon^{-1}}(z)\right)) d w d z\\
					\lesssim&\varepsilon^{-2\nu_1-2} \int_{B_{\varepsilon}(p)} \int_{B_{\varepsilon}(p)} d_{cc}(f(x), f(y)) dx dy \\
					\leq &\varepsilon^{-2\nu_1-2} \cdot 2^{\nu_1+s}\cdot  \varepsilon^{\nu_1+s} \int_{B_{\varepsilon}(p)}  \int_{B_{\varepsilon}(p)} \frac{d_{cc}(f(x), f(y))}{d_{cc}(x, y)^{\nu_1+s}}  d x d y\\
					\leq &2^{\nu_1+s} \varepsilon^{-\nu_1+s-2}  \int_{B_{\varepsilon}(p)}\int_{\Omega} \frac{d_{cc}(f(x), f(y))}{d_{cc}(x, y)^{\nu_1+s}}  d x d y
				\end{aligned}
			\end{equation*}
			denote $x=pz^{-1}$ and $y=pw^{-1}$,
			thus $z^{-1},w^{-1}\in B_\varepsilon^{-1}=B_\varepsilon$ and $pz^{-1},pw^{-1}\in p \cdot B_\varepsilon$. Since the Haar measure $dw$ and $dz$ is invariant under left and right transformation.
			
			By defining the Hardy-Littlewood maximal function 
			\begin{equation}\label{maximal-g}
				\mathcal{M}(g)(p):= \sup_{r>0} \frac{1}{|B(p,r)|}\int_{B(p,r)} |g(y)|dy.
			\end{equation}
			and denote 
			\begin{equation}
				g(y):= \int_{\Omega} \frac{d_{cc}(f(x),f(y))}{d_{cc}(x,y)^{\nu_1+s}}dx,
			\end{equation}   
			then
			\begin{equation}\label{eq:pointwise-defect}
				\begin{aligned}
					\left|Y(f_{\varepsilon}^{*} dx_k)(p)\right| 
					&\lesssim  \varepsilon^{-\nu_1+s-2}  \int_{\lambda B_{\varepsilon}(p)}g(y) d y \\
					&\lesssim \varepsilon^{s-2} \mathcal{M}(g)(p). 
				\end{aligned}
			\end{equation}
			We now carry out the last step. For $q=2$ we do not need the boundedness
			of the maximal function: the pointwise estimate 
			can be integrated directly, and Fubini's theorem already gives the
			required decay.
			\begin{equation*}
				\begin{aligned}
					\|Y(f_{\varepsilon}^{*} dx_k)\|_{L^1(W)}   
					&\lesssim \varepsilon^{s-2}  \int_{\Omega} \int_{B_{\varepsilon}(p)} \int_{\Omega} \frac{d_{cc}(f(x), f(y))}{d_{cc}(x, y)^{\nu_1+s}}  d x dy dp \\
					&= C \cdot \varepsilon^{s-2}   \int_{\Omega}\int_{\Omega}\int_{\Omega} \frac{d_{cc}(f(x), f(y))}{d_{cc}(x, y)^{\nu_1+s}}   \cdot I_{ B_{\varepsilon}(p)}(y) d x dy dp \\
					&= C \cdot \varepsilon^{s-2}   \int_{\Omega}\int_{\Omega}\int_{\Omega} \frac{d_{cc}(f(x), f(y))}{d_{cc}(x, y)^{\nu_1+s}}   \cdot I_{ B_{\varepsilon}(x)}(p) d x dy dp \\
					&\leq C \cdot \varepsilon^{s-2}    \int_{\Omega}\int_{\Omega}\int_{\Omega} \frac{d_{cc}(f(x), f(y))}{d_{cc}(x, y)^{\nu_1+s}}  d x dy dp \\
					&\lesssim \varepsilon^{s-2} \cdot \mathcal{H}^{\nu_1}(\Omega) \cdot [f]_{W_H^{s,1}(\Omega)} 
				\end{aligned}
			\end{equation*}
			For $q>1$ we argue instead through the maximal function, and to do so we
			must know that the energy density $g$ belongs to $L^q$; this is
			exactly where the assumption $s>\frac12$ is spent, since it leaves room
			for the descent from $s$ to $s-\lambda$. The rest of the argument is
			unchanged, but the final integration is now performed in
			$L^q$.
			Because $s> \frac{1}{2}$, there is a small number $\lambda>0$ such that $s-\lambda>\frac{1}{2}$, then we can view $f$ as a mapping in $W_H^{s-\lambda,q}(\Omega;G_2)$.
			For the need of proving, we need to replace the $s$ by $s-\lambda$ in the inequality above and get:
			\begin{equation*}
				\begin{aligned}
					\left|X(f_{\varepsilon}^{*} \omega_{1}^k)(p)\right| 
					&\lesssim \varepsilon^{s-\lambda-1} \mathcal{M}(g)(p), 
				\end{aligned}
			\end{equation*}
			where
			\begin{equation}
				g(y)= \int_{\Omega} \frac{d_{cc}(f(x),f(y))}{d_{cc}(x,y)^{\nu_1+\left(s-\lambda\right)}}dx.
			\end{equation} 
			
			And for compact subset $W$,
			
			\begin{equation*}
				\begin{aligned}
					\|Y(f_{\varepsilon}^{*} dx_k)\|_{L^{q}(W)}^{q} 
					&= \int_{W}\varepsilon^{\left(s-\lambda\right)-2} \mathcal{M}(g)^q  \\
					&\lesssim \varepsilon^{\left(s-\lambda\right)-2} \|g \|_{L^{q}(W)}^q\\
					&\leq \varepsilon^{\left(s-\lambda\right)-2}  \int_{\Omega}\left(\int_{\Omega} \frac{d_{cc}(f(x), f(y))}{d_{cc}(x, y)^{\nu_1+\left(s-\lambda\right)}}  d x\right)^{q} d y\\
					&\lesssim \varepsilon^{\left(s-\lambda\right)-2} \int_{\Omega}\int_{\Omega} \frac{d_{cc}(f(x), f(y))^{q}}{d_{cc}(x, y)^{\nu_1+qs}}  d x \\
					&=C_{q,\lambda}\varepsilon^{\left(s-\lambda\right)-2} \cdot [f]_{W_H^{s,q}(\Omega)}^q
				\end{aligned}
			\end{equation*}

		\end{proof}	
	\end{thm}

	In order to define the Jacobi of fractional Sobolev mappings between Euclidean spaces, it is necessary to get the following result:
	\begin{equation*}
		\int_{\Omega}\phi f_{\varepsilon_1}^*(dV)-\phi f_{\varepsilon_2}^*(dV) \lesssim \| d\phi\|_{L^\infty}[f_{\varepsilon_1}-f_{\varepsilon_2}]_{W^{s,q}}
	\end{equation*} 
	for any $f_{\varepsilon_1}, f_{\varepsilon_2}\in W^{s,q}(\Omega;\mathbb{R}^n)$ and $\Omega \subset \mathbb{R}^n$. This result relies on $f_{\varepsilon_1}-f_{\varepsilon_2}\in W^{s,q}(\Omega;\mathbb{R}^n)$.
	
	However, for $f\in W_H^{s,q}(\Omega;G_2)$, we can not make sure that $f_\varepsilon\in W_H^{s,q}(\Omega;G_2)$ and also $f_{\varepsilon_1}-f_{\varepsilon_2} \in W_H^{s,q}(\Omega;G_2)$, since convolution can not preserve weak contact equations. Therefore, we can not generalize the Jacobi to mappings on Carnot groups with the classical methods.
	
	\bibliographystyle{plain}
	\bibliography{ref}
\end{document}